\title[Monoidal adjunctions and abelian envelopes]{Monoidal adjunctions and abelian envelopes}
\author{Johannes Flake}
\address{Mathematical Institute, University of Bonn,
Endenicher Allee 60, 53115 Bonn, Germany}
\email{flake@math.uni-bonn.de}
\author{Robert Laugwitz}
\address{School of Mathematical Sciences,
University of Nottingham, University Park, Nottingham, NG7 2RD, UK}
\email{robert.laugwitz@nottingham.ac.uk}
\author{Sebastian Posur}
\address{University of Münster,
Fachbereich Mathematik und Informatik,
Einsteinstraße 62,
48149 Münster,
Germany}
\email{sebastian.posur@uni-muenster.de}
\renewcommand\MR[1]{}
\newcommand\Cref[1]{\zcref[S]{#1}}
\newcommand{\longrightleftarrows}{\resizebox{18pt}{7pt}{$\rightleftarrows$}}
\newcommand{\adj}[4]{#1\colon #2~\longrightleftarrows~ #3\colon #4}
\newcommand\SL{\mathsf{SL}}
\newcommand{\rmod}[1]{\mathbf{mod}\text{-}#1}
\newcommand{\Aut}{\operatorname{Aut}}
\newcommand\coev{\operatorname{coev}}
\newcommand{\ev}{\operatorname{ev}}
\newcommand{\End}{\operatorname{End}}
\newcommand{\Hom}{\operatorname{Hom}}
\newcommand{\one}{\mathbf{1}}
\newcommand{\Rep}{\operatorname{Rep}}
\newcommand{\uRep}{\underline{\operatorname{Rep}}\,}
\newcommand{\uPerm}{\underline{\operatorname{Perm}}}
\newcommand\Ten{\operatorname{Ten}}
\newcommand{\cA}{\mathcal{A}}
\newcommand{\cB}{\mathcal{B}}
\newcommand{\cC}{\mathcal{C}}
\newcommand{\cD}{\mathcal{D}}
\newcommand{\cI}{\mathcal{I}}
\newcommand{\cP}{\mathcal{P}}
\newcommand{\cT}{\mathcal{T}}
\newcommand{\cU}{\mathcal{U}}
\newcommand{\cZ}{\mathcal{Z}}
\newcommand{\cH}{\mathcal{H}}
\newcommand{\cS}{\mathcal{S}}
\newcommand\RepSt{\uRep \mathsf{S}_t}
\newcommand\RepOt{\uRep \mathsf{O}_t}
\newcommand\ZZ{\mathbb{Z}} %
\newcommand\kk{\Bbbk} %
\newcommand\id{{\operatorname{id}}}
\newcommand\s\sigma
\newcommand\p\pi
\renewcommand\o\otimes
\newcommand\h{V_n}
\newcommand\im{\operatorname{im}}
\newcommand\cSet{\mathcal{Set}}
\newcommand\cAb{\mathcal{Ab}}
\newcommand\PSh{\operatorname{PSh}}
\newcommand\Ind{\operatorname{Ind}}
\renewcommand\l\lambda
\renewcommand\o{\otimes}
\newcommand\x\times
\newcommand\tto\twoheadrightarrow
\newcommand\NN{\mathbb{N}}
\numberwithin{equation}{section}
\newcommand\newtheoremx[3]{%
\AddToHook{env/#1/begin}{
\zcsetup{countertype={#2=#1}}}
\newtheorem{#1}[#2]{#3}
}
\newtheorem{introtheorem}{Theorem}
\newtheorem{theorem}{Theorem}[section]
\newtheorem*{theorem*}{Theorem}
\theoremstyle{definition}
\theoremstyle{remark}
\renewcommand\:{\colon}
\newcommand\Cps{\cC^{\text{ps}}}
\newcommand\Cob{\mathcal{Cob}}
\newcommand\RepHt{\uRep\mathsf{H}_t}
\newcommand\RepGLnFq{\uRep\mathsf{GL}_t(\F_q)}
\newcommand\Cb{\mathrm{Cob}}
\newcommand\F{\mathbb{F}}
\newcommand\U{\mathcal{U}}
\newcommand\Uex{\U^{\operatorname{ex}}}
\newcommand\tforall{\text{for all }}
\renewcommand\hat\widehat
\renewcommand\tilde\widetilde
\newcommand\op{^{\text{op}}}
\newcommand\ti[1]{\widetilde{#1}}
\newcommand\colim{\operatorname{colim}}
\newcommand\mymatrix[1]{\left(\begin{smallmatrix}#1\end{smallmatrix}\right)}
\tikzset{
    partition/.style={
      scale=0.4,
      yscale=-1,
      baseline={([yshift=-0.5ex]current bounding box.center)}
    }
}
\tikzset{
    bend/.cd,
    0/.style={},
    1/.style={bend right},
    -1/.style={bend left}
}
\newcommand\makePartPt[1]{({Mod(#1,10)},{(#1-Mod(#1,10))*.1})}
\newcommand\makePartLn[2]{%
\pgfmathtruncatemacro\bend{%
(int(#1/10)==int(#2/10)) ?
(#1<10 ? 1 : -1)*(#1>#2 ? 1 : -1)
: 0%
}
\draw[draw=black,line width=0.5pt,line cap=round] ({mod(#1,10)},{int(#1/10}) to[bend/\bend] ({mod(#2,10)},{int(#2/10)});
}
\newcommand\tp[1] {%
~\tikz[partition] {
\draw[white,opacity=0] (1,0)--(1,1); %
\def\j{0}
\foreach \i [remember=\i as \j] in {#1} {
  \ifnum \i>0
    \ifnum \j>0
      \makePartLn{\i}{\j};
    \fi
  \fi
} %
\foreach \i [remember=\i as \j] in {#1} {
  \ifnum \i>0
    \draw[draw=none,fill=black] \makePartPt\i circle (4pt);
  \fi
} %
}~}
\newcommand\listorempty[1]{\def\temp{#1}\ifx\temp\empty \emptyset \else (#1) \fi}
\renewcommand\bar{\tp{1,11}}
\newcommand\jtqftpt[1]{
\pgfmathsetmacro\x{mod(#1,10)}
\pgfmathsetmacro\y{1-2*div(#1,10)}
\coordinate (n-#1-1) at (\x*\w+\r*\y*\w,\y*\h);
\coordinate (n-#1-2) at (\x*\w-\r*\y*\w,\y*\h);
\draw (n-#1-2) to[out=90,in=90] (n-#1-1) (n-#1-2);
\pgfmathsetmacro\sign{#1 < 10 ? 1 : -1}
\pgfmathtruncatemacro\txt{#1 < 10 ? #1 : (#1-10)}
\pgfmathsetmacro\suff{#1 < 10 ? "" : "'"}
\node at (\x*\w,\y*\h+\sign*\doff) {\txt\suff};
}
\newcommand\jtqfthandle[2]{
\bgroup
\def\x{#1*\w}\def\y{#2*\h}\def\ww{0.33cm}\def\hh{0.12cm}\def\ang{80}
\def\arcA{(\x,\y) to[out=\ang,in=180-\ang] (\x+\ww,\y)}
\def\arcB{(\x,\y+\hh) to[out=-\ang,in=-180+\ang] (\x+\ww,\y+\hh) -- (\x+.5*\ww,\y+3*\hh) -- cycle}
\begin{scope}[looseness=1.1]
\begin{scope}
\clip \arcA; \draw[fill=white] \arcB; 
\end{scope}
\draw \arcA;
\end{scope}
\egroup
}
\newcommand\jtqft[2]{ \bgroup
\def\w{0.8cm} \def\h{0.6cm}
\def\r{0.33} \def\doff{0.35cm} 
\def\loocirc{0.6} \def\looshape{0.9}
\tikz[remember picture,looseness=\loocirc,line width=0.8pt,font={\sffamily\small},baseline=0]{
\foreach \i [remember=\i as \ii (initially 0)] in {#1,0} {
\ifnum\i>0 \jtqftpt{\i}; \fi
\ifnum\ii=0
   \global\edef\jtqftstart{\i}
   \global\edef\jtqftpath{(n-\i-1)}
\else\ifnum\i=0
   \pgfmathsetmacro\dout{\ii<10 ? -90 : 90}
   \pgfmathsetmacro\din{\jtqftstart<10 ? -90 : 90}
   \pgfmathsetmacro\dloo{\ii==\jtqftstart ? 3*\looshape :((\ii<10) == (\i<10) ? 1.3*\looshape : \looshape)}
   \global\edef\jtqftpath{\jtqftpath to[out=\dout,in=\din,looseness=\dloo] (n-\jtqftstart-2) to[out=-90,in=-90] (n-\jtqftstart-1)}
   \draw[fill=white,fill opacity=0.9] \jtqftpath;
\else
   \pgfmathsetmacro\dout{\ii<10 ? -90 : 90}
   \pgfmathsetmacro\din{\i<10 ? -90 : 90}
   \pgfmathsetmacro\dloo{(\ii<10) == (\i<10) ? 1.3*\looshape : \looshape}
   \global\edef\jtqftpath{\jtqftpath to[out=\dout,in=\din,looseness=\dloo] (n-\i-2) to[out=-90,in=-90] (n-\i-1)}
\fi\fi } %
#2
} %
\egroup }
\begin{document}

\begin{abstract}
We show how monoidal adjunctions can be used to prove the existence of monoidal abelian envelopes of pseudo-tensor categories, in particular, those admitting a combinatorial description with certain properties.
We derive concrete general criteria, which we then demonstrate by giving relatively simple combinatorial proofs of the existence of new abelian envelopes for interpolation categories of the hyperoctahedral and of the modified symmetric groups.
\end{abstract}

\maketitle

\section{Introduction}

A key technique in modern representation theory is to consider the entirety or at least a sufficiently large collection of representations of an algebraic object, like a group, and study the category they form. In some cases, such categories can be described efficiently by a suitable graphical calculus, like that of Temperley--Lieb diagrams in the case of the group $\SL_2$. Many categories defined using a similar diagrammatic calculus have been studied. Typically, these are linear monoidal categories which can be completed to so-called pseudo-tensor categories. While the latter have a good notion of tensor product and direct sum, it generally does not make sense to consider composition series for their objects. Another completion process called monoidal abelian envelope, or just abelian envelope, addresses this. 

A general theory of abelian envelopes is currently under construction, see \cites{Cou-monab, BEO, CEOP}. However, at this point, there are few easy existence criteria that apply to large classes of pseudo-tensor categories. We suggest a systematic use of monoidal adjunctions, from which we derive, and verify in some cases, combinatorial criteria for the existence of abelian envelopes.

\medskip

We start by defining, in \Cref{sec:pdiag-categories}, a class of linear monoidal categories whose hom-spaces are spanned by collections of morphisms closed under tensor products. As this is a typical feature in categories defined using a diagrammatic calculus, we use the term \emph{pseudo-diagrammatic} for such categories. In \cite{CEOP}, a necessary exactness condition for the existence of abelian envelopes is established. We show:

\begin{introtheorem}[\Cref{prop:U-equals-Uex}] \label{thm:A} Any pseudo-diagrammatic linear monoidal category $\cC$ satisfies the necessary exactness condition of \cite{CEOP} to have an abelian envelope.
\end{introtheorem}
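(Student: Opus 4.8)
The plan is to establish the statement in the sharper form of \Cref{prop:U-equals-Uex}, that for pseudo-diagrammatic $\cC$ the canonical comparison between $\Uex(\cC)$ and $\U(\cC)$ is an equivalence. Recall that $\U = \U(\cC)$ is the abelian category of finitely presented right $\cC$-modules, with $\cC$ embedded by the Yoneda functor $h\colon X \mapsto \Hom_\cC(-,X)$; its Day convolution tensor product satisfies $h_X \o h_Y \cong h_{X\o Y}$, is right exact in each variable, and has the representables $h_X$ as projective generators. The subcategory $\Uex \subseteq \U$ consists of the \emph{tensor-exact} objects, those $M$ for which $M \o -$ is exact, and the necessary condition of \cite{CEOP} is that an abelian envelope can exist only if the Day convolution on $\U$ is biexact, i.e.\ only if $\Uex = \U$. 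So the task is to prove $\Uex = \U$ from the pseudo-diagrammatic hypothesis alone.

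First I would reduce this to a statement about representables. Because $\U$ has enough projectives and $h_X \o -$ is already right exact, its exactness is equivalent to the vanishing of the derived functor $\operatorname{Tor}_1$; choosing a presentation $h_A \to h_B \twoheadrightarrow M$ of an arbitrary module and using that representables are projective, this vanishing reduces by a dimension shift to showing that $h_X \o -$ preserves monomorphisms. On the level of $\cC$ this amounts to the single assertion that $\id_X \o \phi$ is a monomorphism whenever $\phi$ is. Thus $\Uex = \U$ follows once I show that left tensoring by any object preserves monomorphisms in $\cC$.

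Here the pseudo-diagrammatic structure enters. A morphism $\phi$ is monic in $\cC$ exactly when it is left-cancellable against the chosen tensor-closed spanning set of diagram morphisms, and left tensoring by $\id_X$ maps this spanning set into itself. The clean mechanism, available in the examples of interest, is self-duality: the spanning diagrams include cup and cap morphisms exhibiting each object as its own dual, so that $X \o -$ is simultaneously left and right adjoint to $X^{*} \o -$. These adjunctions lift along $h$ to make $h_X \o -$ both a left and a right adjoint on $\U$, hence exact, and in particular flat. My aim in the general argument is to extract precisely the fragment of this duality that the tensor-closed spanning set guarantees, namely enough cancellation to transport left-cancellability from $\phi$ to $\id_X \o \phi$.

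The step I expect to be the main obstacle is exactly this last transport, carried out from diagrammatic data without assuming $\cC$ rigid from the outset. Tensor-closedness of the spanning set yields the surjectivity half of exactness for free, but it does not by itself prevent tensoring from collapsing a relation and thereby manufacturing a new kernel; the crux is to rule out such collapse. I would obtain this from the requirement, built into the definition of pseudo-diagrammatic, that the spanning diagrams remain linearly independent after tensoring, equivalently that $-\o X$ reflects the relations among them, and it is this independence-after-tensoring that upgrades the automatic right exactness to the full exactness recorded by $\Uex = \U$.
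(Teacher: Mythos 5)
There is a genuine gap, and it begins at the very first step: you have misidentified the exactness condition to be verified. In this paper (\Cref{def:U-Uex}, following \cite{CEOP}), $\U(\cC)$ is not a category of finitely presented right $\cC$-modules with a Day convolution product; it is the \emph{class of non-zero morphisms} $u\in\cC(U,\one)$, and $\Uex(\cC)$ is the subclass of those $u$ that are coequalizers of the parallel pair $u\o U,\,U\o u\colon U\o U\to U$. The condition is the equality of these two classes, i.e.\ that every non-zero $u\colon U\to\one$ is such a coequalizer. Your proposal instead sets out to prove biexactness of a convolution product on a module category, reduces that to ``left tensoring preserves monomorphisms in $\cC$'', and then tries to deduce this from self-duality (cups and caps) of the spanning diagrams. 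None of this engages with the actual statement, and the duality you invoke is not part of the pseudo-diagrammatic hypothesis: condition \hyperref[cond:diag]{(Diag)} in \Cref{def:pseudo-diagrammatic} only asks for bases closed under $\o$ together with a factorization-through-$\one$ property, while rigidity is a separate hypothesis added in \Cref{prop:U-equals-Uex}.

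For comparison, the paper's proof extracts from \hyperref[cond:diag]{(Diag)} two linear-algebraic consequences: \hyperref[cond:ex1]{(Ex1)}, injectivity of the map $\cC(\one,V)\o_\kk\cC(U,\one)\to\cC(U,V)$ induced by $\o$, and \hyperref[cond:ex2]{(Ex2)}, that if $f\o g$ factors through $\one$ then $f$ and $g$ are sums of morphisms factoring through $\one$. It then argues directly: given $f$ with $u\o f=f\o u$, a rigidity map shows that $f\o\theta_{U,\one}(u)$ factors through $\one$, so \hyperref[cond:ex2]{(Ex2)} yields $f=\sum_i v_iu_i$ with the $u_i$ linearly independent and $u_1=u$, and \hyperref[cond:ex1]{(Ex1)} forces $v_i=0$ for $i>1$, exhibiting $f=v_1u$ with $v_1$ unique, hence $u$ as the required coequalizer. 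Even within your own program the decisive step is left unresolved --- you acknowledge the ``main obstacle'' of ruling out collapse of relations after tensoring and only gesture at how the tensor-closed bases would supply it --- so the proposal would need to be restarted from the correct definitions of $\U$ and $\Uex$.
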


We verify, in \Cref{sec:families}, that \Cref{thm:A} applies, in particular, to various families of interpolation categories based on cobordisms, partitions, or (linear) relations, including $\RepSt$ \cites{Del,CO-ab}, $\RepHt$ \cite{FM}, $\RepOt$ \cite{Del}, Knop's tensor envelopes \cite{Knop}, including Knop's $\RepGLnFq$ \cite{EAH-glt}, and Harman--Snowden's tensor categories \cite{HS} from pro-oligomorphic groups.

By the theory developed in \cite{CEOP}, the existence of an abelian envelope can then be deduced from the existence of \emph{splitting objects}, i.e., objects tensoring with which turns any fixed morphism into a split morphism (see \Cref{sec:ab-env-diag}). The abelian envelopes obtained in this way have the so-called \emph{quotient property}, that is, any object of the abelian envelope is a quotient of one in the original category. Abelian envelopes with the quotient property generalize abelian envelopes with enough projectives, which were constructed in \cite{BEO}.

A key insight is that splitting objects can be transferred along certain monoidal adjunctions (\Cref{cor:transport-splitting}), which implies:

\begin{introtheorem}[\Cref{cor:ab-env-from-functor-semisimple}, \Cref{cor:criterion-enough-proj}] \label{thm:B} Assume $\cC$ is a pseudo-diagrammatic pseudo-tensor category $\cC$ which has a linear monoidal functor $F$ to an abelian tensor category with enough projectives such that $F$ has a left or right adjoint. Then $\cC$ has an abelian envelope with enough projectives. 
\end{introtheorem}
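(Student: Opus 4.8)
The plan is to reduce the existence of an abelian envelope with enough projectives to the production of splitting objects in $\cC$, and then to obtain these by transferring splitting objects from the target $\cD$ along the given adjunction. First I would invoke \Cref{thm:A}: since $\cC$ is pseudo-diagrammatic, it satisfies the necessary exactness condition of \cite{CEOP}, so the machinery of that paper applies. By this machinery it suffices to exhibit, for every morphism of $\cC$, a splitting object, i.e.\ an object tensoring with which turns the morphism into a split one (image a direct summand of source and target), together with enough such objects of ``projective type'' to guarantee that the resulting abelian envelope, which automatically has the quotient property, in fact has enough projectives, recovering the \cite{BEO} situation.

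Second, I would show that the target $\cD$ already has an abundant supply of splitting objects. In the semisimple case (\Cref{cor:ab-env-from-functor-semisimple}) this is immediate: every morphism in a semisimple abelian category splits, so every object of $\cD$ is a splitting object. In the general case (\Cref{cor:criterion-enough-proj}) I would argue that the projective objects of $\cD$ are splitting objects. Concretely, for a morphism $g$ of $\cD$ and a projective $P$, factor $g \otimes \id_P$ through its image; since $\cD$ is rigid the functor $- \otimes P$ is exact, so this image is $\Img(g) \otimes P$, which is again projective. Using that in a tensor category with enough projectives the projectives form a tensor ideal and are moreover injective, $\Img(g) \otimes P$ is a projective--injective object, hence a direct summand of both $X \otimes P$ and $Y \otimes P$; thus $g \otimes \id_P$ splits and $P$ is a splitting object.

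Third, I would transfer these splitting objects from $\cD$ to $\cC$ using the monoidal adjunction. Given a morphism $f$ of $\cC$, apply $F$ and split $F(f)$ in $\cD$ by a (projective) splitting object $S$; then \Cref{cor:transport-splitting} produces, from $S$ together with the unit/counit of the adjunction and the (co)lax monoidal structure that doctrinal adjunction puts on the adjoint of $F$, a splitting object for $f$ in $\cC$, essentially the image of $S$ under that adjoint. Running this over a set of projective generators of $\cD$ yields enough splitting objects of projective type in $\cC$, and \cite{CEOP} then assembles them into an abelian envelope with enough projectives.

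The main obstacle I expect is the second step: extracting splitting objects from the bare hypothesis ``enough projectives'' on $\cD$ rests on the tensor-ideal property of the projectives and on the Frobenius-type identification of projectives with injectives, which presuppose sufficient rigidity and finiteness of $\cD$; one must check these are part of the standing hypotheses on a tensor category in this setting, or can be arranged. A secondary subtlety is ensuring that ``enough projectives'' survives the transfer, that is, that the adjoint carries a projective generator of $\cD$ to objects generating enough projectives in the envelope of $\cC$; this likely requires some conservativity or faithfulness of $F$ (or of the adjunction), which I would verify as part of applying \Cref{cor:transport-splitting}.
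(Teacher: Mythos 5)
Your proposal is correct and follows essentially the same route as the paper: exactness from \Cref{prop:U-equals-Uex}, projectives in $\cD$ as splitting objects (the paper's \Cref{ex:semisimple-splitting}(b), argued there via split epimorphisms onto projectives rather than your projective--injective coincidence, though both work), transfer along the adjunction via \Cref{cor:transport-splitting}, and then the global splitting object criterion of \Cref{thm:ab-env-projectives} to upgrade the quotient property to enough projectives. The one point you leave open --- non-vanishing of the transferred object --- is settled in the paper not by faithfulness of $F$ but by choosing $P$ with a non-zero morphism to or from $\one$ and observing $\cC(F'(P),Z)\cong\cD(P,F(Z))\neq 0$.
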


For instance, the target of $F$ could be semisimple. We also show versions of \Cref{thm:B} when some of the assumptions are satisfied only for a family of full subcategories $(\cC_i)_i$ of $\cC$ such that $\cC=\bigcup_i \cC_i$, or when the adjoint functors exist only after passing to ind-completions. In the latter case, the abelian envelopes might not have enough projectives, but still has the quotient property.

To obtain practical criteria for checking the hypotheses in \Cref{thm:B}, we show that for certain linear monoidal functors $F$, the existence of a right adjoint (for instance) already follows from representability of the functor $\Hom(F(-),\one)$, see \Cref{corollary:adjoints_rigid_case} and \Cref{lem:adjoint-ind}.

To describe concrete realizations of abelian envelopes with enough projectives, we slightly generalize and simplify the construction in \cite{BEO}, and show:

\begin{introtheorem}[\Cref{thm:ab-env-projectives}] \label{thm:x} For a pseudo-tensor category $\cC$, the following are equivalent:
\begin{enumerate}
\item[(a)] $\cC$ has an abelian envelope with enough projectives.

\item[(b)] $\one\in\cC$ has a presentation by global splitting objects in $\cC$.

\item[(c)] $\cC$ has a non-zero global splitting object and the exactness condition of \cite{CEOP} holds.
\end{enumerate}
In this case, the abelian envelope is realized by the category of finitely presented functors of the category of splitting objects in $\cC$.
\end{introtheorem}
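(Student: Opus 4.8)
The plan is to prove the three conditions equivalent by establishing the cycle (a)$\Rightarrow$(c)$\Rightarrow$(b)$\Rightarrow$(a), and to obtain the concrete realization as a byproduct of the implication (b)$\Rightarrow$(a). Throughout I would write $\cP\subseteq\cC$ for the full additive subcategory of (global) splitting objects, and first record the two basic closure facts on which everything rests: that $\cP$ is closed under the tensor product and under direct summands, and that tensoring with an object of $\cP$ turns every morphism of $\cC$ into a split morphism. The representable functors $\Hom(-,P)$ for $P\in\cP$ will play the role of the projective generators of the envelope.

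For the implication (b)$\Rightarrow$(a), which simultaneously yields the realization, I would set $\cA:=\operatorname{fp}(\cP\op,\Vect)$, the category of finitely presented $\kk$-linear functors on $\cP$. The key structural input is that $\cP$ has weak kernels: given $f\colon P\to Q$ in $\cP$, the splitting hypothesis provides an object of $\cP$ together with a map to $P$ that weakly represents the kernel of the induced natural transformation $\Hom(-,P)\to\Hom(-,Q)$, which is the finitely-presented-functor incarnation of the fact that tensoring by a splitting object makes $f$ split. By the standard Auslander-type theory, weak kernels make $\cA$ abelian, the representables $\Hom(-,P)$ are projective, and every object admits a presentation by representables, so $\cA$ has enough projectives. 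Since $\cP$ is monoidal, Day convolution endows $\cA$ with a monoidal structure for which the Yoneda functor $\cC\to\cA$ is monoidal and faithful; here the hypothesis that $\one$ has a presentation by objects of $\cP$ is exactly what guarantees that the convolution unit is finitely presented and coincides with the image of $\one$. It then remains to check that $\cA$ satisfies the universal property of the abelian envelope of \cite{CEOP}, namely initiality among faithful exact monoidal functors out of $\cC$; this is verified using projective generation by $\cP$ and the fact that any such functor is determined by its restriction to $\cP$.

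For (a)$\Rightarrow$(c): if $\cC$ has an abelian envelope $\cA$ with enough projectives, then $\cA$ is nonzero and hence has a nonzero projective object; because the envelope has the quotient property, it may be taken to lie in the image of $\cC$, and projectivity together with biexactness of the tensor product forces it to be a global splitting object. The exactness condition of \cite{CEOP} is then automatically in force, being a necessary condition for the existence of any abelian envelope by \emph{loc.\ cit.} For (c)$\Rightarrow$(b): starting from a single nonzero global splitting object $P$, I would use the (co)evaluation morphisms to exhibit $\one$ as a summand, hence a quotient, of $P\o P^\vee\in\cP$, and then invoke the weak-kernel construction above, justified by the exactness condition, to prolong the resulting epimorphism $P\o P^\vee\tto\one$ to a presentation $P_1\to P_0\tto\one$ with $P_0,P_1\in\cP$.

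The main obstacle I anticipate is the implication (b)$\Rightarrow$(a): specifically, verifying that the Day convolution on $\operatorname{fp}(\cP\op,\Vect)$ is well defined (finitely presented functors must be closed under convolution, which is where the presentation of $\one$ and the closure of $\cP$ under $\otimes$ are simultaneously used) and exact, and that the resulting category genuinely satisfies the universal property rather than merely being an abelian tensor category receiving $\cC$. Establishing the weak-kernel property of $\cP$ cleanly from the splitting hypothesis, and controlling the unit object through the chosen presentation, are the technical crux; the remaining implications are comparatively formal once $\cP$ and its representables have been identified with the projectives of the envelope, and this is also where the promised simplification of the construction in \cite{BEO} should appear.
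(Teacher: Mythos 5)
Your proposal follows essentially the same route as the paper: the cycle (b)$\Rightarrow$(a)$\Rightarrow$(c)$\Rightarrow$(b), with the envelope realized as finitely presented functors on the category of global splitting objects, weak kernels in that category derived from the splitting property, the Day-convolution-style monoidal structure determined by $\cS(-,S)\o\cS(-,T)\simeq\cS(-,S\o T)$ and right exactness, and the identification of projectives of the envelope with global splitting objects via the quotient property.

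One detail in your (c)$\Rightarrow$(b) step is wrong as stated: $\one$ need \emph{not} be a direct summand of $P\o P^*$ for a non-zero global splitting object $P$, since the composite $\one\xrightarrow{\coev}P\o P^*\xrightarrow{\ev}\one$ is the dimension of $P$, which can vanish (this happens, e.g., for projectives in $\RepSt$ at $t=0$). The correct and more direct argument, which is what the paper does, is to apply the exactness condition $\U(\cC)=\Uex(\cC)$ to the non-zero morphism $\ev\:S\o S^*\to\one$: it says precisely that $\ev$ is the cokernel of $\ev\o(S\o S^*)-(S\o S^*)\o\ev$, which hands you the presentation $S\o S^*\o S\o S^*\to S\o S^*\to\one\to0$ by global splitting objects in one step, with no need to first establish an epimorphism by other means or to invoke the weak-kernel construction.
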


Here, a \emph{global} splitting object is one that splits all morphisms in $\cC$. In particular, if $\cC$ is assumed pseudo-diagrammatic, then it has an abelian envelope with enough projectives if and only if it has a non-zero global splitting object, by a combination of \Cref{thm:x} and \Cref{thm:A}.

As an application of our general theory, we consider subcategories of Deligne's interpolation categories $\cS_t:=\RepSt$ for the symmetric groups, where the role of the functor $F$ in \Cref{thm:B} is played by the natural inclusion functor to $\cS_t$.

\begin{introtheorem}[\Cref{thm:partition-cats}] \label{thm:C} In characteristic $0$, let $\cC$ be a subcategory of $\cS_t$ whose hom-spaces are spanned by partition diagrams such that the functor $\cS_t(-,\one)|_{\cC_i}$ is representable, for all $i\ge0$, for the pseudo-abelian subcategories $\cC_i$ of $\cC$ generated by the objects $[0],[1],\dots,[i]$. Then $\cC$ has an abelian envelope with the quotient property.
\end{introtheorem}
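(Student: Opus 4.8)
The plan is to verify the hypotheses of the family-and-ind-completion version of \Cref{thm:B} for the inclusion $F\colon\cC\hookrightarrow\cS_t$. Since the hom-spaces of $\cC$ are spanned by partition diagrams, and these are closed under composition and tensor product, $\cC$ is pseudo-diagrammatic, so \Cref{thm:A} supplies the exactness condition of \cite{CEOP}; passing to its Karoubian envelope (which has the same abelian envelope) I may assume $\cC$ is a pseudo-diagrammatic pseudo-tensor category, closed under tensor products and containing the self-dual generator $[1]$ of $\cS_t$, so that $[n]=[1]^{\o n}\in\cC$ for all $n$. In characteristic $0$, $\cS_t$ is rigid and admits an abelian envelope $\cA_t$ with enough projectives, the projective objects of $\cA_t$ being exactly the objects of $\cS_t$ \cite{CO-ab}; for generic $t$ one has $\cA_t=\cS_t$, which is semisimple.

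The key step is to extract, from the representability hypothesis, an ind-right adjoint of each $F|_{\cC_i}$. Because $[1]=F([1])$ generates $\cS_t$ and is self-dual, $F$ is dominant up to retracts and the target $\cS_t$ is rigid, so the criterion of \Cref{corollary:adjoints_rigid_case} applies: the existence of a right adjoint of $F|_{\cC_i}$ is governed, through the projection formula $G(F(W))\cong W\o G(\one)$, by the representability of the single functor $\cS_t(-,\one)|_{\cC_i}$. This is precisely the hypothesis, so by \Cref{corollary:adjoints_rigid_case} and \Cref{lem:adjoint-ind} each $F|_{\cC_i}$ acquires a right adjoint valued in $\Ind\cC$, for every $i\ge0$.

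Finally I would assemble over the exhausting chain. The subcategories satisfy $\cC_i\subseteq\cC_{i+1}$, and as every object of $\cC$ is a retract of a finite direct sum of the $[n]$ with $[n]\in\cC_n$, one has $\cC=\bigcup_i\cC_i$. Since the projectives of $\cA_t$ are the objects of $\cS_t$, the ind-adjoints just produced are exactly the datum that the transfer-of-splitting-objects mechanism behind \Cref{thm:B} consumes; feeding the family $(\cC_i)_i$ into its family-and-ind-completion form, namely \Cref{cor:ab-env-from-functor-semisimple} together with \Cref{cor:criterion-enough-proj}, yields an abelian envelope of $\cC$. Because the adjoints exist only after ind-completion, this envelope carries the quotient property rather than necessarily enough projectives, which is the assertion.

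I expect the main obstacle to be the interplay in the middle step between the Karoubian target $\cS_t$, which supplies the adjoint precisely because $F$ is retract-dominant with $[1]$ self-dual, and the abelian target $\cA_t$, whose enough-projectives property drives \Cref{thm:B}. Concretely one must check that the ind-right adjoint defined on $\cS_t$, i.e.\ on the projectives of $\cA_t$, is the object actually used by the envelope construction; that the projection-formula reduction to $\one$ is valid at the ind-level over each $\cC_i$, with $X\o[n]\in\cC_{i+n}$ controlling the tensored-up hom-spaces; and that these adjoints are compatible as $i$ grows, so that the family version genuinely applies. This compatibility bookkeeping, rather than any isolated formal step, is where the real work lies.
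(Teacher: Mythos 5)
Your overall strategy matches the paper's (pseudo-diagrammatic $\Rightarrow$ exactness condition; representability $\Rightarrow$ monoidal adjoint; adjoint into a category with projectives $\Rightarrow$ splitting objects $\Rightarrow$ envelope via \Cref{cor:ab-env-from-functor-semisimple}), but there is a genuine gap at the central step. From the hypothesis you only obtain, for each $i$, a representing object $X_i\in\cC_i$ for $\cS_t(\iota(-),\one)|_{\cC_i}$; what the splitting-object transfer actually consumes is a single right adjoint of $\Ind(\iota)\:\Ind(\cC)\to\Ind(\cS_t)$, defined on all of $\Ind(\cC)$. Adjoints of the restrictions $\iota|_{\cC_i}$ alone are useless here: $\cC_i=\langle[0],\dots,[i]\rangle$ is not a monoidal subcategory (it is not closed under $\otimes$, since $[i]\o[i]=[2i]\notin\cC_i$), so \Cref{corollary:adjoints_rigid_case} does not apply to $\iota|_{\cC_i}$ (its source is not rigid monoidal and it is not dominant), and \Cref{cor:transport-splitting} has no rigid monoidal $\cC'$ inside $\cC_i$ to act on. Likewise, the ``family-and-ind-completion version'' you invoke does not exist in the form you need: \Cref{cor:ab-env-from-functor-semisimple-filtered} requires functors $F_i$ defined on all of $\cC$ whose ind-extensions admit adjoints, not adjoints of restrictions to the $\cC_i$. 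The missing ingredient is precisely \Cref{lem:adj-filtered}: one uses the representability data to produce connecting maps $h_i\:X_i\to X_{i+1}$ compatible with the counits $p_i\in\cS_t(\iota(X_i),\one)$, and shows that $\ti X=\colim_i X_i\in\Ind(\cC)$ ind-represents $\cS_t(\iota(-),\one)$ on all of $\cC$, whence $\Ind(\iota)$ has a right adjoint by \Cref{lem:adjoint-ind}. You correctly flag this compatibility as ``where the real work lies,'' but the work is the proof, not a footnote to it.

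Two smaller points. First, your claim that the projectives of the abelian envelope $\cA_t$ of $\cS_t$ are \emph{exactly} the objects of $\cS_t$ is false for $t\in\NN_0$ (e.g.\ $\one$ is not projective there); what is true, and what you need, is only that every projective of $\cA_t$ lies in $\cS_t$. Second, the paper sidesteps the passage to $\cA_t$ altogether: for $t\in\NN_0$ it composes $\iota$ with the restriction functor $\cS_t\to\cS_{-1}\boxtimes\Rep(S_{t+1})$, whose target is semisimple and which has a right adjoint, and then applies \Cref{cor:ab-env-from-functor-semisimple} in its braided ind-completion form (d$'$). Your route through $\cA_t$ could be made to work (a projective $P$ of $\cA_t$ lies in $\cS_t$ and splits all morphisms of $\cS_t$ after tensoring, so $R(P)$ is a splitting object in $\Ind(\cC)$ once $R$ exists), but as written it rests on the unconstructed global adjoint and on the incorrect identification of the projectives.
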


If even the functor $\cS_t(-,\one)|_{\cC}$ is representable, then the abelian envelope has enough projectives.

Even more specifically, we consider interpolation categories for the hyperoctahedral groups $S_2\wr S_n$ and for the modified symmetric groups $S_2\times S_n$ that are suitable subcategories of the categories $\cS_t$. Verifying the hypothesis of \Cref{thm:C} we show:

\begin{introtheorem}[\Cref{cor:main-H}, \Cref{cor:main-S-prime}]  \label{thm:E} The interpolation categories $(\cH_t)_{t\in\kk}$ and $(\cS'_t)_{t\in\kk\setminus\{0\}}$ of the hyperoctahedreal and the modified symmetric groups, respectively, have abelian envelopes with the quotient property, over any field of characteristic~$0$. The abelian envelopes of $(\cS'_t)_{t\in\kk\setminus\{0\}}$ have enough projectives.
\end{introtheorem}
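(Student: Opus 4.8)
The plan is to deduce both families at one stroke from \Cref{thm:C} and its refinement for enough projectives. Concretely, for each $t$ I would realize $\cH_t$ and $\cS'_t$ as subcategories of a Deligne category $\cS_{t'}=\uRep\mathsf{S}_{t'}$ (for the appropriate value $t'$ of the interpolation parameter) whose hom-spaces are spanned by partition diagrams, so that they fall squarely within the scope of the theory: being diagram-spanned they are pseudo-diagrammatic, whence the exactness condition of \cite{CEOP} holds by \Cref{thm:A}, and the only thing left to check is the representability hypothesis of \Cref{thm:C}.

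First I would construct the embeddings. For the hyperoctahedral case I would use the inclusion $S_2\wr S_n\hookrightarrow S_{2n}$ of signed permutations as the centralizer of a fixed-point-free involution; at the level of interpolation categories this identifies $\cH_t$ with the subcategory of $\cS_{t'}$ whose morphism spaces are spanned by the partition diagrams compatible with the extra $\ZZ/2$-structure, namely those whose blocks satisfy the relevant parity condition. For the modified symmetric groups I would give the analogous description using $S_2\times S_n\hookrightarrow S_2\wr S_n\hookrightarrow S_{2n}$ with the $S_2$-factor acting diagonally, producing a larger, but still diagram-spanned, subcategory of $\cS_{t'}$. In both cases the essential point is that the restricted hom-spaces have an explicit basis of partition diagrams.

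Second, and this is the crux, I would verify the representability of $\cS_{t'}(-,\one)$. For $\cH_t$ it suffices to show that the presheaf $\cS_{t'}(-,\one)|_{\cC_i}$ is representable for each $i\ge0$, where $\cC_i$ is the pseudo-abelian subcategory generated by $[0],\dots,[i]$; by \Cref{thm:C} this already yields an abelian envelope with the quotient property. For $\cS'_t$ with $t\neq0$ I would instead prove the stronger statement that the single functor $\cS_{t'}(-,\one)$ on all of $\cC$ is representable, which upgrades the conclusion to enough projectives by the remark following \Cref{thm:C}; the hypothesis $t\neq0$ enters precisely to guarantee a \emph{nonzero} global splitting object, so that the resulting envelope is nondegenerate. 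In each case representability amounts to exhibiting an explicit object $R_i$ (resp.\ $R$) in (the ind-completion of) the subcategory, built from the generating object and its symmetric powers, together with a natural isomorphism $\cC(X,R_i)\cong\cS_{t'}(X,\one)$, and checking it on the partition-diagram bases.

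The main obstacle will be this representability computation at non-generic parameter values. For generic $t$ the categories are semisimple and representability is automatic; the content of the statement is that it persists for all $t\in\kk$ (resp.\ all $t\neq0$). I expect the verification to hinge on an explicit combinatorial analysis of how the pairing $\cS_{t'}(X,\one)$ decomposes over diagrams subject to the imposed parity or diagonal constraint, and on controlling the representing objects as $i$ grows. The contrast between the two families reflects exactly whether these representing objects assemble into a single object of $\cC$ — as they do for $\cS'_t$, giving enough projectives — or only into an object of the ind-completion, as for $\cH_t$, giving merely the quotient property.
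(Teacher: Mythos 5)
Your high-level strategy coincides with the paper's: realize each family as a diagram-spanned pseudo-tensor subcategory of a Deligne category, get the exactness condition from \Cref{thm:A}, and reduce everything to the representability hypothesis of \Cref{thm:C} (the filtered version for $\cH_t$, the global version for $\cS'_t$); you also correctly predict which family gets enough projectives and why. However, there is a genuine gap: the representability of $\cS_t(-,\one)$ restricted to $\cC_i$ (resp.\ to all of $\cS'_t$) \emph{is} the mathematical content of the theorem, and you only flag it as ``the crux'' and ``the main obstacle'' without supplying the argument. The paper carries this out in \Cref{lem:main-H} and \Cref{prop:main-S-prime} by exhibiting explicit representing objects: for $\cH_t$, the images $X_j$ of the commuting idempotents $x_je_j$, where $x_j=x(\id_{[j]})$ is a M\"obius-type idempotent over coarsenings and $e_j$ is the symmetrizer, with $X=\bigoplus_{0\le j\le i}X_j$ representing $\cS_t(-,\one)|_{\cC_i}$; for $\cS'_t$, the images of $e_0=\id_{[0]}$ and $e_1=t^{-1}$ times a single-block diagram. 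Bijectivity of the evaluation map $\varphi=(p\circ-)$ is then proved by matching explicit spanning sets against the partition-diagram basis of $\cS_t([m],\one)$, using an absorption property of $x_j$ and a filtration by number of components. None of this is routine, and the observation that ``for generic $t$ representability is automatic by semisimplicity'' does not close the gap, since the representing objects must be produced uniformly for every $t$. Note also that $t\neq0$ enters concretely because $e_1$ involves $t^{-1}$, not merely to guarantee a nonzero global splitting object.

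A secondary deviation: the paper defines $\cH_t$ and $\cS'_t$ as subcategories of $\cS_t$ with the \emph{same} parameter, spanned by diagrams with only even components (resp.\ an even number of odd components), corresponding to the $n$-dimensional reflection representations. Your proposed realization of $\cH_t$ via $S_2\wr S_n\hookrightarrow S_{2n}$ lands in $\cS_{t'}$ with $t'=2t$ and is generated by the $2n$-dimensional permutation representation; a priori this is a different subcategory, so you would additionally need to prove it equivalent to $\cH_t$ before any conclusion transfers.
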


In both cases, the existence result seems to be new. An advantage of our proof strategy for the existence of abelian envelopes, as demonstrated in the two examples, is that it is relatively simple and combinatorial in nature, and that it is applicable and might lead to a uniform theory of abelian envelopes for large classes of (interpolation) pseudo-tensor categories. 
The criterion in \Cref{thm:C} also gives a new proof for the well-known fact due to \cite{CO-ab} that the interpolation categories $(\cS_t)_{t\in\kk}$ have abelian envelopes in characteristic~$0$, see \Cref{cor:St}. 

Our results in the concrete cases can also be interpreted as follows: for any pseudo-tensor category $\cC$ with an abelian envelope $\widetilde\cC$ and with a faithful linear monoidal functor to $\cS_t$, for some $t\in\kk$, note that the universal property of the abelian envelope implies the existence of an (exact) tensor functor from $\widetilde\cC$ to the abelian envelope $\widetilde\cS_t$ of $\cS_t$ extending the original functor. Hence, by a suitably general version of Tannaka--Krein duality (\cite{CEO-incompressible}*{Theorem~4.2.1}), $\widetilde\cC$ can be viewed as the category of representations of an $\widetilde\cS_t$-group, i.e., an affine group scheme in $\widetilde\cS_t$ together with a morphism from the fundamental group of $\widetilde\cS_t$. In particular, the abelian envelopes constructed in \Cref{thm:E} correspond to such $\widetilde\cS_t$-groups.

Crucial ingredients in the proof of \Cref{thm:C} are certain restriction functors for the categories $\cS_t$. In a follow-up paper \cite{FLP-future}, we study more general restriction and induction functors between categories constructed using partitions and cobordisms, and use the theory developed in this paper to connect such functors with abelian envelopes. In particular, we show that the abelian envelopes of the categories $\cH_t$ (also) have enough projectives.

\subsection*{Acknowledgments} J.~F.~thanks Kevin Coulembier and Nate Harman for useful discussions, and the Max Planck Institute for Mathematics
Bonn for the excellent conditions provided while some of this article was written.

\tableofcontents

\section{Background}

We fix a field $\kk$.

\subsection{Pseudo-tensor categories} \label{bg:pseudo}
We recall basic notions relating to various kinds of categories following \cites{EGNO,CEOP}.

A monoidal category is called \emph{left rigid / right rigid / rigid} if it has left / right / both left and right duals. Our convention, as in \cite{EGNO}, is to denote the left dual of an object $X$ by $X^*$ and its right dual by ${}^*X$, i.e., we have a coevaluation $\one\to X\o X^*$ and an evaluation $X^*\o X\to\one$ in this case. A rigid object $X$ comes with two pairs of adjoint functors $(-\o X,-\o X^*)$ and $(X\o-,{}^*X\o-)$, and similar adjunctions can be formulated for objects that are only left or right rigid. 

A $\kk$-linear additive category is called \emph{pseudo-abelian} if idempotent endomorphisms split. Any abelian $\kk$-linear category is pseudo-abelian. A $\kk$-linear additive category is called \emph{semisimple} if it is abelian and any object is isomorphic to a direct sum of finitely many simple objects. For an equivalent definition of semisimple categories not using the notion of abelian categories, see \cite{Muger}*{Section~2.1}.

From any $\kk$-linear category $\cC$, one can construct a pseudo-abelian category by adding formal direct sums and formal images of idempotents. We call this category the \emph{pseudo-abelian envelope} and denote it $\Cps$. If $\cC$ is rigid \slash{} symmetric \slash{} monoidal, then the respective structure can be extended naturally to $\Cps$. We define the \emph{tensor product} of two pseudo-abelian categories $\cC$ and $\cD$ as $\cC\boxtimes \cD:=(C\times_\Bbbk D)^{\mathrm{ps}}$, for the naive tensor product  $C\times_\Bbbk D$, see also \cite{FLP}*{Section 2.1}.

Following \cite{CEOP}*{Section~1.1.1}, for us, a \emph{pseudo-tensor category} is a $\kk$-linear pseudo-abelian rigid monoidal category which additionally satisfies the following conditions:
\begin{itemize}
    \item it is essentially small and
    \item the endomorphism algebra of the tensor unit is one-dimensional.
\end{itemize}
A pseudo-tensor category is called a \emph{tensor category} (here, as in \cite{CEOP}\footnote{There is a typo in \cite{CEOP}*{1.1.4}: The expression ``pseudo-abelian category'' in the first sentence of the subsection should be ``pseudo-tensor category''.}) if it is abelian (note that the definition of tensor categories in \cite{EGNO} is stronger). Since every object has a left and right dual by assumption, tensoring any object has both a left and right adjoint functor, and hence, the tensor product is biexact. 

For pseudo-tensor categories $\cD,\cD'$, we denote by $\Ten^{\text{faith}}(\cD,\cD')$ the category of faithful linear monoidal functors. For tensor categories $\cT,\cT'$, we denote by $\Ten^{\text{exact}}(\cT,\cT')$ the category of exact linear monoidal functors (these are exactly the faithful linear monoidal functors by \cite{CEOP}*{Theorem~2.4.1}). 

\begin{definition}[\cite{CEOP}*{Definition~2.1.1}] A \emph{monoidal abelian envelope}, or just \emph{abelian envelope}, of a pseudo-tensor category $\cD$ is a pair $(\cT,F)$, where $\cT$ is a tensor category and $F\in\Ten^{\text{faith}}(\cD,\cT)$ is a fully faithful functor such that for any tensor category $\cT'$, the functor
$$
(-\circ F): \Ten^{\text{exact}}(\cT,\cT')\to\Ten^{\text{faith}}(\cD,\cT')
$$
is an equivalence.
\end{definition}

We say a monoidal / $\kk$-linear additive monoidal / pseudo-abelian monoidal category $\cC$ has a \emph{generator} $X$, where $X$ is an object in $\cC$, if $\cC$ coincides with any a monoidal,  $\kk$-linear additive monoidal or pseudo-abelian full subcategory containing $X$, respectively.

The \emph{ind-completion} of a pseudo-tensor category $\cC$ is denoted $\Ind(\cC)$. It is the completion of $\cC$ with respect to taking filtered colimits. Concretely, it can be thought of as the ``category of filtered diagrams'' in $\cC$, that is, a category whose objects are represented by functors from small filtered categories to $\cC$. Post-composition of functors yields a functor $\Ind(F)\:\Ind(\cC)\to\Ind(\cD)$ for every functor $F\:\cC\to\cD$. We refer to \Cref{appendix:ind} for a detailed discussion of ind-completions and some of their properties that we will require.

\subsection{Categories defined using cobordisms} \label{bg:Cob}

We recall two equivalent descriptions of the categories $\Cob_{\alpha,u}$, one in terms of objects and morphisms and one in terms of a universal property, following \cite{KS} and \cite{KOK}, see also \cite{FLP}.

For any $m,m'\ge0$, let $\Cb_{m,m'}$ be the set of isotopy classes of compact orientable smooth 2d surfaces $M$ whose boundary $\partial M$ is a disjoint union of $m+m'$ circles labeled by the set $\{1,\dots,m,1',\dots,m'\}$, where isotopies have to fix the boundary. We call the elements of these sets \emph{cobordisms}. The circles labeled by $\{1,\dots,m\}$ are called \emph{upper circles}, and the remaining circles \emph{lower circles}.
Let $\cC$ be the category with objects $\{[m]\}_{m\ge0}$ and with morphisms $\cC([m],[m'])=\kk\Cb_{m,m'}$, where gluing along boundary circles with the same labels defines the composition. The identity morphisms in this category are given by disjoint unions of cylinders. Disjoint union and relabeling the boundary circles (preserving the order) defines a functor $\o\colon \cC\times \cC\to \cC$ for this category. With these two operations, $\cC$ becomes a $\kk$-linear monoidal category with tensor unit $[0]$, corresponding to an empty set of boundary circles. Since cobordisms can have arbitrary closed components (without  boundary), and since there are no restrictions on the genus of cobordisms, all hom-spaces in $\cC$ are infinite-dimensional.

For instance, we define
$$
\eta:=\jtqft{11}{}, \qquad
\mu:=\jtqft{1,2,11}{}, \qquad
\varepsilon:=\jtqft{1}{}, \qquad
\Delta:=\jtqft{1,12,11}{}, \qquad
\Psi:=\jtqft{1,12,0,2,11}{},
$$ 
$$
\phi:=\mu\Delta=\jtqft{1,11}{\jtqfthandle{0.8}{0}},\qquad
\hat \phi_i := \varepsilon \phi^i \eta = \jtqft{0}{
\draw[looseness=0.9] (0,0) 
to[out=-90,in=-180] (1,-0.5)
to[out=0,in=-180] (2,-0.4)
to[out=0,in=-180] (3,-0.5)
to[out=0,in=-90] (4,0)
to[out=90,in=0] (3,0.5)
to[out=-180,in=0] (2,0.4)
to[out=-180,in=0] (1,0.5)
to[out=-180,in=90] (0,0);
\jtqfthandle{1}{0};
\jtqfthandle{3}{0};
\node at (2.2*\w,0) [font=\footnotesize] {$\dots$};
} .
$$

Then the infinite sets $\{\phi^i\}_{i\ge0}$ in $\cC([1],[1])$ and $\{\hat\phi_i\}_{i\ge0}$ in $\cC([0],[0])$ are each linearly independent.

Given scalars $\alpha=(\alpha_i)_{i\ge0}$ in $\kk$ and a polynomial $u=u(x)=\sum_{i\ge0} u_i x^i \in\kk[x]$, let $\cI$ be the tensor ideal in $\cC$ generated by the morphisms
\begin{equation} \label{eq:rels-Cob}
(\hat\phi_i - \alpha_i \id_{[0]})_{i\ge 0} \,,\qquad u(\phi) \,.     
\end{equation}
Then $\Cob_{\alpha,u}:=(\cC/\cI)^\text{ps}$ is a hom-finite symmetric pseudo-tensor category in the sense of \Cref{bg:pseudo}, where the braiding is defined using $\Psi$. The composition in $\Cob_{\alpha,u}$ is still defined by gluing along boundary circles, but now any closed component of genus $i$ is replaced by a scalar factor $\alpha_i$.

The two relations in \Cref{eq:rels-Cob} together imply the relation
$$
\Big(\sum_{i\ge0} u_i \alpha_{i+j} \Big) \id_{[0]} = 0 \qquad \tforall j\ge0 .
$$
Hence, the hom-finite category $\Cob_{\alpha,u}$ can have non-zero morphisms only if the sequence $(\alpha_i)_i$ is recurring, or equivalently, the power series with coefficients $\alpha$ is a rational function, and $u$ is divisible by a unique minimal monic polynomial $u_\alpha$ which encodes the shortest recurrence relation for the sequence $\alpha$.

\begin{definition} \label{def:Frob-alg-datum}
A \emph{Frobenius algebra datum} is a pair $(\alpha,u)$ consisting of scalars $\alpha=(\alpha_i)_{i\ge0}$ in $\kk$ and a monic polynomial $u=u(x)=\sum_{i\ge0} u_i x^i$ in $\kk[x]$ such that $\sum_{i\ge0} u_i \alpha_{i+j}=0$ for all $j\ge0$.
\end{definition}

It turns out that the object $[1]$ in $\Cob_{\alpha,u}$ is a commutative Frobenius algebra object (which we call the \emph{canonical Frobenius algebra}) whose structural morphisms $(\eta,\mu,\varepsilon,\Delta)$, as above, satisfy the additional identities
\begin{equation}
\label{eq:identities-Frob}
\varepsilon(\mu\Delta)^i\eta = \alpha_i
\quad\tforall i\ge0 ,\qquad
u(\mu\Delta) = 0 .
\end{equation}
We say a Frobenius algebra $A$ \emph{has Frobenius algebra} datum $(\alpha,u)$ whenever its structural morphisms $(\eta,\mu,\varepsilon,\Delta)$ satisfy \Cref{eq:identities-Frob}. Then $\Cob_{\alpha,u}$ is the universal symmetric pseudo-tensor category with a braiding defined by $\Psi$ and with a commutative Frobenius algebra with Frobenius algebra datum $(\alpha,u)$. That is, the category of $\kk$-linear symmetric monoidal functors from $\Cob_{\alpha,u}$ to any symmetric pseudo-tensor category $\cD$ is equivalent to the category of commutative Frobenius algebra objects in $\cD$ whose structural morphisms satisfy \Cref{eq:identities-Frob}. This follows from \cite{KOK}*{Section~2.3} or \cite{FLP}*{Proposition~3.7}.

\subsection{Categories defined using partition diagrams} \label{bg:RepSt}

Fix a scalar $t\in\kk$. An important special case of the categories discussed in \Cref{bg:Cob} are the categories $\cS_t := \Cob_{\alpha,u}$ for $\alpha=(t,t,\dots)$ and $u(x)=x-1$. It is easy to see that $(\alpha,u)$ defined in this way is a Frobenius algebra datum in the sense of \Cref{def:Frob-alg-datum}.

Let $f$ be any cobordism and let $f'$ be the same cobordism but with all handles removed. Then as $u(x)=x-1$, the relations \Cref{eq:rels-Cob} imply that $f=f'$ in $\cS_t$. Similarly, the relations in $\Cob_{\alpha,u}$ allow us to remove closed components from $f$ and replace them with scalars according to the parameters $(\alpha_i)_i$. Hence, we lose nothing thinking of the cobordisms in $\cS_t$ as string diagrams with a given number of upper and lower points instead of circles, which are connected by a number of strings, where string diagrams are considered equivalent if the partition of the set of upper and lower points defined by connectedness via strings coincide. Composition in $\cS_t$ is given by gluing along the boundary points and replacing any closed components by a factor $t$. We denote the set of string diagrams, or \emph{partition diagrams}, with $m$ upper and $m'$ lower points by $P_{m,m'}$.

As a specialization of the universal property of $\Cob_{\alpha,u}$ (see \Cref{bg:Cob}), $\cS_t$ is the universal symmetric pseudo-tensor category with a commutative Frobenius algebra object of dimension $t$ which is \emph{special}, that is, its structural morphisms $(\eta,\mu,\varepsilon,\Delta)$ satisfy $\mu\Delta=\id$.

Whenever $t=n\in\NN_0$, the universal property immediately implies the existence of a $\kk$-linear symmetric monoidal functor 
$$
\cS_n\to\Rep S_n
$$
sending $[1]$ to the standard representation $\kk^n$, which is a suitable Frobenius algebra in $\Rep S_n$. If $\kk$ is of characteristic $0$, this functor is full and essentially surjective (and, moreover, the \emph{semisimplification} functor as studied in \cite{EO-semisimplification}), which is why the categories $\cS_t$ can be considered interpolation categories for the symmetric groups.

For any partition diagram, we assign as a \emph{size} to each of its components the number of points contained in it. We call a component of a partition diagram \emph{even} or \emph{odd} depending on (the parity of) its size. We also will also call a component \emph{upper} or \emph{lower} if it contains only upper or lower points, respectively.

Beyond $\cS_t$, we consider certain subcategories of it. Given a suitable subset $Q\subset\bigsqcup_{m,m'} P_{m,m'}$ of partition diagrams, we can associate to, first, the $\kk$-linear monoidal subcategory of $\cS_t$ on all objects of the form $[m]$, for $m\ge0$, with hom-spaces formed by the linear combinations of partition diagrams in $Q$. Here, ``suitable'' means that the subset $Q$ has to have closure properties with respect to the composition and the tensor product in $\cS_t$. We can then associate to $Q$ the pseudo-abelian completion of this $\kk$-linear monoidal subcategory. In this way, a large number of pseudo-tensor subcategories of $\cS_t$ arise, including the following two:
\begin{itemize}
\item $\cH_t$: for $Q$ the set of partition diagrams all of whose components are even,
\item $\cS'_t$: for $Q$ the set of partition diagrams with an even number of odd components.
\end{itemize}

These categories can also be regarded as interpolation categories. For any $n\ge0$, let $H_n=S_2\wr S_n$ be the \emph{hyperoctahedral group} and let $S'_n:=S_2\times S_n$ be the group sometimes called \emph{modified symmetric group}. Then we have linear symmetric monoidal functors
$$
\cH_n\to\Rep H_n,\qquad \cS'_n\to\Rep S'_n, \qquad \tforall n\ge0,
$$
sending $[1]$ to the $n$-dimensional reflection representation of $H_n$, and to the $n$-dimensional reflection representation of $S'_n=S_2\times S_n$ on which $S_n$ acts by permuting a basis and $S_2$ acts by a sign, respectively. In characteristic $0$, these functors are full and essentially surjective. The categories $\cH_n$ and $\cS'_n$ for $n\ge0$ are discussed in \cite{BanicaSpeicher-liberation}, the categories $\cH_t$ and $\cS'_t$ for general $t$ fall into the framework developed in \cite{FM}. 

The categories $\cS_t$ were introduced by Deligne in \cite{Del} and are often denoted $\RepSt$. Different aspects of these categories were studied in \cites{CO-blocks,CO-ab,FHL}. The categories $\cH_t$ have also been called $\RepHt$ and have been studied in \cites{FM, Heidersdorf-Tyriard}. Various subcategories of $\cS_t$ corresponding to suitable subclasses of partition diagrams, including the mentioned ones, were systematically studied in \cite{FM}, building in part on the theory of so-called ``easy orthogonal quantum groups'', e.g.~in \cite{BanicaSpeicher-liberation}. The subcategory of $\cS_t$ generated by non-crossing partition diagrams of component size $2$ are, by definition, pseudo-abelian completions of \emph{Temperley--Lieb categories}.

\section{Pseudo-diagrammatic categories}
\label{sec:pdiag-categories}

We fix a field $\kk$. $\cC$ will always denote at least a $\kk$-linear monoidal category in this section. 

\subsection{Exactness conditions} \label{sec:exactness}

We start by defining a class of $\kk$-linear monoidal categories whose hom-spaces have bases that behave well under tensor products.

\begin{definition} \label{def:pseudo-diagrammatic} A \emph{pseudo-diagrammatic category} is a $\kk$-linear monoidal category $\cC$ with the following additional condition: 
\begin{itemize}
\item[(Diag)] \label{cond:diag} 
There are bases $B_{U,V}$ of the hom-spaces $\cC(U,V)$, for all $U,V\in \cC$, such that

(a) the tensor product in $\cC$ restricts to an injective map 
$$
B_{U,V}\times B_{U',V'}\hookrightarrow B_{U\o U',V\o V'} \qquad\tforall U,U',V,V'\in\cC,
$$

(b) whenever $b\o b'$ factors through $\one$ for basis elements $b,b'$, then both $b$ and $b'$ factor through $\one$.
\end{itemize}
\end{definition}

Note that a factorization of a morphism via the object $\one$ can also be interpreted as a tensor product decomposition of this morphism. Hence, the conditions in \hyperref[cond:diag]{(Diag)} can be verified without using the composition in $\cC$.

\begin{definition} \label{def:ex} We say a $\kk$-linear monoidal category $\cC$ has property...
\begin{itemize}
\item[(Ex1)] \label{cond:ex1} if the tensor product in $\cC$ produces an injective $\kk$-linear map 
$$ \psi_{U,V}\:\cC(\one,V)\o_\kk\cC(U,\one)\hookrightarrow\cC(U,V) \quad\tforall U,V\in\cC;
$$

\item[(Ex2)] \label{cond:ex2} whenever a tensor product $f\o g$ of morphisms in $\cC$ factors through $\one$, then both $f$ and $g$ are a sum of morphisms factoring through $\one$.
\end{itemize}  
\end{definition}

Note that the maps $\psi_{U,V}$ in \hyperref[cond:ex1]{(Ex1)} could equivalently be defined using the composition in $\cC$. 

\begin{remark} A sum of morphisms factoring through $\one$ does not have to factor through $\one$: for instance, consider ${}^{\bullet\bullet}_{\bullet\bullet}+{}^{\cup}_{\cap}$ in $\cS_t$ or $\mymatrix{ 1&0\\0&1 }=\mymatrix{ 1&0\\0&0 }+\mymatrix{ 0&0\\0&1 }$ in the (tensor) category of vector spaces.
\end{remark}

\begin{lemma} \label{lem:diag-v-ex12}
    Any pseudo-diagrammatic category has properties \hyperref[cond:ex1]{(Ex1)} and \hyperref[cond:ex2]{(Ex2)}.
\end{lemma}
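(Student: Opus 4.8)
The plan is to establish \hyperref[cond:ex1]{(Ex1)} first and then bootstrap \hyperref[cond:ex2]{(Ex2)} from it. For \hyperref[cond:ex1]{(Ex1)}, I would note that, up to the unit isomorphisms $\one\o U\cong U$ and $V\o\one\cong V$, the map $\psi_{U,V}$ is the $\kk$-linearization of the tensor-product map $B_{\one,V}\times B_{U,\one}\to B_{\one\o U,\,V\o\one}$, $(a,c)\mapsto a\o c$. By \hyperref[cond:diag]{(Diag)}(a) this map is injective with image contained in the basis $B_{\one\o U,\,V\o\one}$, so the morphisms $a\o c$ are pairwise distinct basis vectors, hence linearly independent; post-composing with the isomorphisms induced by the unitors preserves linear independence. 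Since $\{a\o_\kk c : a\in B_{\one,V},\ c\in B_{U,\one}\}$ is a basis of $\cC(\one,V)\o_\kk\cC(U,\one)$ and $\psi_{U,V}$ carries it to a linearly independent set, $\psi_{U,V}$ is injective. This argument is insensitive to the (possibly infinite) dimensions of the hom-spaces.

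For \hyperref[cond:ex2]{(Ex2)}, I would first invoke Mac Lane coherence to assume $\cC$ strict: the conditions \hyperref[cond:diag]{(Diag)}, \hyperref[cond:ex1]{(Ex1)}, \hyperref[cond:ex2]{(Ex2)} refer only to tensor products of morphisms, to composition, and to the property of factoring through $\one$, all preserved by a monoidal equivalence, so we may suppress the unitors and write $\one\o W=W$. The central step is then the claim that, for all $W,W'$, the subspace of $\cC(W,W')$ consisting of sums of morphisms factoring through $\one$ (equivalently, $\im\psi_{W,W'}$) is spanned by those elements of the given basis $B_{W,W'}$ that factor through $\one$. Indeed, by \hyperref[cond:ex1]{(Ex1)} the injective map $\psi_{W,W'}$ carries the basis $\{a\o_\kk c\}$ to a basis $\{a\circ c\}$ of its image; in the strict setting $a\circ c=a\o c$, which lies in $B_{W,W'}$ by \hyperref[cond:diag]{(Diag)}(a) applied to the objects $\one,W',W,\one$, and manifestly factors through $\one$. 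Conversely every basis element of $B_{W,W'}$ factoring through $\one$ lies in $\im\psi_{W,W'}$, giving the claimed equality.

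With the claim in hand the rest is short. Writing $f=\sum_i\lambda_i b_i$ and $g=\sum_j\mu_j b'_j$ in the bases $B_{U,V}$ and $B_{U',V'}$ with all $\lambda_i,\mu_j\neq0$, \hyperref[cond:diag]{(Diag)}(a) shows that $f\o g=\sum_{i,j}\lambda_i\mu_j\,(b_i\o b'_j)$ is a linear combination of \emph{pairwise distinct} basis elements $b_i\o b'_j\in B_{U\o U',\,V\o V'}$. We may assume $f,g\neq0$, since $f\o g\neq0$ whenever $f,g\neq0$ by \hyperref[cond:diag]{(Diag)}(a), leaving only the degenerate vanishing case. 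As $f\o g$ factors through $\one$, the claim places $f\o g$ in the span of the basis elements of $B_{U\o U',\,V\o V'}$ factoring through $\one$; by uniqueness of the basis expansion, every $b_i\o b'_j$ occurring with nonzero coefficient factors through $\one$. Finally \hyperref[cond:diag]{(Diag)}(b) yields that each $b_i$ and each $b'_j$ factors through $\one$, so $f$ and $g$ are sums of morphisms factoring through $\one$.

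The main obstacle, and the reason the statement is subtler than it first looks, is that the bases $B_{W,W'}$ are prescribed abstractly and need not be compatible with the unitor isomorphisms: a priori a morphism factoring through $\one$ could expand in $B_{W,W'}$ using basis vectors that do not individually factor through $\one$, which would break the passage from $f\o g$ to its factors. This is exactly what the central claim excludes, and it is where strictness (via coherence) and \hyperref[cond:ex1]{(Ex1)} do the real work: \hyperref[cond:ex1]{(Ex1)} identifies the factoring-through-$\one$ subspace with the image of a known basis, while strictness lets us recognize that image as the span of a subfamily of $B_{W,W'}$, so that \hyperref[cond:diag]{(Diag)}(b) can be applied termwise.
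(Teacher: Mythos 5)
Your proof is correct and follows essentially the same route as the paper: establish (Ex1) directly from (Diag)(a), then expand $f\o g$ in the tensor basis of pairwise distinct elements $b_i\o b'_j$, observe that the factorization through $\one$ forces each basis element appearing with nonzero coefficient to factor through $\one$ (the paper does this by comparing with the basis expansion of $v\o u$, you by identifying $\im\psi_{W,W'}$ with the span of the basis elements factoring through $\one$ --- the same comparison), and finish with (Diag)(b). Your explicit strictification via coherence just makes precise a unitor identification that the paper leaves implicit.
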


\begin{proof}
\hyperref[cond:diag]{(Diag)}(a) immediately implies \hyperref[cond:ex1]{(Ex1)}.

To show \hyperref[cond:ex2]{(Ex2)}, consider a tensor product $h=f\o g$ which factors through $\one$, say $h=v u = v\o u$, where the source of $v$ and the target of $u$ is $\one$. We can write $f,g,u,v$ in terms of the bases whose existence is the subject of \hyperref[cond:diag]{(Diag)}, then
$$
h = \sum_{i,j} r_{ij} b_i\o b'_j = \sum_{k,\ell} s_{k\ell} b''_k \o b'''_\ell  
$$
for some scalars $r_{ij},s_{k\ell}\in\kk$, where the first sum corresponds to $f\o g$ and the second sum corresponds to $v\o u$. In particular, the source of $b''_k$ and the target of $b'''_\ell$ is $\one$, for all $k,\ell$. Due to \hyperref[cond:diag]{(Diag)}(a), the tensor products $b_i\o b'_j$ and $b''_k\o b'''_\ell$ are elements of the same basis, so if $r_{ij}\neq0$, then $b_i\o b'_j$ is of the form $b''_k\o b'''_\ell$ for some $k,l$. In particular, $b_i\o b'_j$ factors through $\one$. But then, due to \hyperref[cond:diag]{(Diag)}(b), both $b_i$ and $b'_j$ factor through $\one$, as desired.
\end{proof}

We want to extend the notion ``pseudo-diagrammatic'' to additive or pseudo-abelian $\kk$-linear monoidal categories.

\begin{lemma} \label{lem:additive-pseudo-diagrammatic} Assume $\cC$ is a $\kk$-linear monoidal category, and $\cC_0\subset \cC$ is a full $\kk$-linear monoidal subcategory such that any object in $\cC$ is a direct summand in a finite direct sum of objects from $\cC_0$. Then \hyperref[cond:ex1]{(Ex1)} and \hyperref[cond:ex2]{(Ex2)} hold for $\cC$ if and only if they hold for $\cC_0$.
\end{lemma}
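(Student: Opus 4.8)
The plan is to prove the equivalence by exploiting the fact that both conditions \hyperref[cond:ex1]{(Ex1)} and \hyperref[cond:ex2]{(Ex2)} are formulated purely in terms of the tensor product, direct sums, and factorizations through $\one$, all of which interact well with passing between $\cC$ and a full subcategory $\cC_0$ that additively generates it. The forward direction (if the conditions hold for $\cC$, they hold for $\cC_0$) is essentially immediate: since $\cC_0$ is a full monoidal subcategory, its hom-spaces are sub-hom-spaces of those of $\cC$, and the maps $\psi_{U,V}$ for objects of $\cC_0$ are restrictions of the corresponding maps in $\cC$, so injectivity is inherited; likewise a factorization through $\one$ in $\cC_0$ is a factorization through $\one$ in $\cC$, so \hyperref[cond:ex2]{(Ex2)} descends directly. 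The substance of the proof lies in the converse.

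For the converse, the key idea is that every object $U\in\cC$ sits as a direct summand of some $\bigoplus_i U_i$ with $U_i\in\cC_0$, and so hom-spaces in $\cC$ are expressible as direct summands (as $\kk$-vector spaces, via the idempotents witnessing the splittings) of hom-spaces between objects of $\cC_0$. First I would fix, for each object $U\in\cC$, a choice of object $\hat U=\bigoplus_i U_i\in\cC$ with $U_i\in\cC_0$ together with split mono/epi pairs $\iota_U\colon U\to\hat U$, $\pi_U\colon\hat U\to U$ satisfying $\pi_U\iota_U=\id_U$. To verify \hyperref[cond:ex1]{(Ex1)} for arbitrary $U,V\in\cC$, I would show that the map $\psi_{U,V}$ is a retract of $\psi_{\hat U,\hat V}$: concretely, the diagram relating $\psi_{U,V}$ and $\psi_{\hat U,\hat V}$ commutes when one inserts the maps induced by $\iota_U,\pi_U,\iota_V,\pi_V$ on the relevant hom-spaces, because the tensor product is bilinear and compatible with composition. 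Since $\psi_{\hat U,\hat V}$ decomposes along the direct-sum decompositions of $\hat U$ and $\hat V$ into a direct sum of the maps $\psi_{U_i,V_j}$ (each injective by the hypothesis on $\cC_0$, as $\cC_0$ is monoidal and hence closed under the tensor products appearing), it is itself injective, and a retract of an injective map is injective.

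For \hyperref[cond:ex2]{(Ex2)}, suppose $f\o g$ factors through $\one$ for morphisms $f,g$ in $\cC$. I would conjugate $f$ and $g$ by the chosen splittings to replace them with morphisms $\hat f=\iota f \pi$ and $\hat g=\iota g \pi$ between objects of $\cC_0$; then $\hat f\o\hat g$ still factors through $\one$, and applying \hyperref[cond:ex2]{(Ex2)} for $\cC_0$ expresses $\hat f$ and $\hat g$ each as sums of morphisms factoring through $\one$. Composing back with the projections and inclusions (which preserve the property of factoring through $\one$, since factoring through $\one$ is stable under pre- and post-composition) recovers $f$ and $g$ as sums of morphisms factoring through $\one$. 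A subtlety to handle carefully is that the decomposition of $\hat U$ into summands in $\cC_0$ need only exist, not be canonical, so I must make the choices once at the start and check that all the retraction and conjugation identities hold with those fixed choices.

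The main obstacle I anticipate is bookkeeping rather than conceptual: ensuring that the tensor product of the chosen splitting data for $U,U'$ indeed furnishes splitting data for $U\o U'$ compatibly, so that the retract/conjugation arguments for $\psi_{U,V}$ and for the factorization in \hyperref[cond:ex2]{(Ex2)} are genuinely natural in all four variables at once. Concretely, one needs $(\iota_U\o\iota_{U'})$ and $(\pi_U\o\pi_{U'})$ to serve as splitting maps for $U\o U'$ into a direct sum of objects of $\cC_0$, which follows from functoriality of $\o$ and the fact that $\hat U\o\hat U'$ distributes into a direct sum of tensor products $U_i\o U'_j\in\cC_0$; once this compatibility is recorded, both verifications reduce to the injectivity (resp.\ \hyperref[cond:ex2]{(Ex2)}) already assumed for $\cC_0$.
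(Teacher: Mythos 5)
Your overall strategy is the same as the paper's: the forward direction by fullness, and the converse by reducing along chosen split mono/epi pairs $\iota_U\colon U\to\hat U$, $\pi_U\colon\hat U\to U$ to finite direct sums of objects of $\cC_0$. Your treatment of \hyperref[cond:ex1]{(Ex1)} is correct: $\psi_{\hat U,\hat V}$ decomposes as $\oplus_{i,j}\psi_{U_i,V_j}$, each summand is injective by hypothesis, and $\psi_{U,V}$ is a retract of $\psi_{\hat U,\hat V}$, hence injective.

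There is, however, a gap in your argument for \hyperref[cond:ex2]{(Ex2)}. You describe $\hat f=\iota f\pi$ and $\hat g=\iota g\pi$ as ``morphisms between objects of $\cC_0$'' and then apply \hyperref[cond:ex2]{(Ex2)} for $\cC_0$ to $\hat f\o\hat g$. But the sources and targets of $\hat f$ and $\hat g$ are finite direct sums $\oplus_i U_i$ of objects of $\cC_0$, and $\cC_0$ is \emph{not} assumed to be closed under direct sums (in the intended applications, e.g.\ the cobordism categories, $\cC_0$ has only the objects $[m]$). So $\hat f\o\hat g$ is in general not a morphism of $\cC_0$, and the hypothesis \hyperref[cond:ex2]{(Ex2)} for $\cC_0$ says nothing about it as stated. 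The missing step, which is exactly the observation the paper records, is one further reduction: write $\hat f$ and $\hat g$ as matrices $(f_{ij})$ and $(g_{k\ell})$ whose entries are morphisms between objects of $\cC_0$; note that $\hat f\o\hat g$ then has entries $f_{ij}\o g_{k\ell}$ with respect to the induced decompositions of the tensor products, and that a morphism between finite direct sums which factors through $\one$ has the property that every matrix entry factors through $\one$ (compose the factorization with the relevant inclusion and projection). Then \hyperref[cond:ex2]{(Ex2)} for $\cC_0$ applies to each $f_{ij}\o g_{k\ell}$, and reassembling $f$ from the $f_{ij}$ by pre- and post-composition with the splitting data --- using, as you correctly note, that factoring through $\one$ is stable under pre- and post-composition --- gives the conclusion. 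With this step inserted, your proof coincides with the paper's.
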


\begin{proof} Using fullness, it follows immediately that each of the two conditions holds for $\cC_0$ once it holds for $\cC$.

Now assume both hold for $\cC_0$. Then \hyperref[cond:ex1]{(Ex1)} for $\cC$ follows from observing that
$$
\psi_{\oplus_i U_i,\oplus_j V_j} = \oplus_{i,j} \psi_{U_i,V_j}
$$
for any finite sets of objects $(U_i)_i$ and $(V_j)_j$ in $\cC$. \hyperref[cond:ex2]{(Ex2)} for $\cC$ follows from noting that if any morphism between finite direct sums in $\cC$ given by entries $(f_{ij})_{i,j}$ factors through $\one$, then all $f_{ij}$ factor through $\one$; and that a direct sum of a morphism $f$ and a zero morphism factors through $\one$ if and only if $f$ factors through $\one$.
\end{proof}

\begin{definition}\label{def:pseudo-diagrammatic-plus}
An \emph{$\oplus$-pseudo-diagrammatic} category is a $\kk$-linear monoidal category $\cC$ which has a full $\kk$-linear monoidal subcategory $\cC_0$ that is pseudo-diagrammatic (in the sense of \Cref{def:pseudo-diagrammatic}) such that any object in $\cC$ is a direct summand in a finite direct sum of objects from $\cC_0$.
\end{definition}

\begin{corollary} \label{cor:add-pseudo-diagrammatic}
    Any $\oplus$-pseudo-diagrammatic has properties \hyperref[cond:ex1]{(Ex1)} and \hyperref[cond:ex2]{(Ex2)}.
\end{corollary}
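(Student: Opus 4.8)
The plan is to obtain this as an immediate consequence of the two preceding lemmas, since the notion of an $\oplus$-pseudo-diagrammatic category in \Cref{def:pseudo-diagrammatic-plus} is set up precisely so that both apply. First I would unwind the hypothesis: if $\cC$ is $\oplus$-pseudo-diagrammatic, then by definition there is a full $\kk$-linear monoidal subcategory $\cC_0 \subseteq \cC$ which is pseudo-diagrammatic in the sense of \Cref{def:pseudo-diagrammatic} and such that every object of $\cC$ is a direct summand of a finite direct sum of objects of $\cC_0$.

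Next I would invoke \Cref{lem:diag-v-ex12} for $\cC_0$, which states that a pseudo-diagrammatic category satisfies (Ex1) and (Ex2); hence $\cC_0$ has both properties. The pair $(\cC,\cC_0)$ then meets exactly the hypotheses of \Cref{lem:additive-pseudo-diagrammatic}, namely a full $\kk$-linear monoidal subcategory whose objects generate all of $\cC$ via direct summands of finite direct sums. That lemma yields the equivalence that (Ex1) and (Ex2) hold for $\cC$ if and only if they hold for $\cC_0$; combining this with the previous step gives the conclusion for $\cC$.

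I expect no genuine obstacle here: the substantive work has already been carried out in \Cref{lem:diag-v-ex12} and \Cref{lem:additive-pseudo-diagrammatic}, and the corollary is a purely formal chaining of the two. The only point to verify is that the hypotheses line up verbatim, which holds by the very wording of \Cref{def:pseudo-diagrammatic-plus}; in particular, one should note that the pseudo-diagrammaticity of $\cC_0$ is exactly the input needed for the first lemma, while the direct-summand condition is exactly the input needed for the second.
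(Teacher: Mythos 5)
Your proposal is correct and matches the paper's own argument exactly: the paper proves this corollary by citing \Cref{lem:diag-v-ex12} together with \Cref{lem:additive-pseudo-diagrammatic}, which is precisely the chaining you describe. Your version merely spells out the (correct) unwinding of \Cref{def:pseudo-diagrammatic-plus} that the paper leaves implicit.
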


\begin{proof} By \Cref{lem:diag-v-ex12} with \Cref{lem:additive-pseudo-diagrammatic}.
\end{proof}

\medskip

\begin{definition} \label{def:U-Uex} For any $\kk$-linear monoidal $\cC$, following  \cite{CEOP}*{Section~2.3}, we define $\cU(\cC)$ as the collection of non-zero morphisms $u$ of the form $u\in\cC(U,\one)$ for some object $U\in\cC$, and $\Uex(\cC)$ as the subset of those $u\in\cU(\cC)$ which are a coequalizer of $u\o U$ and $U\o u$, i.e., a cokernel of $u\o U-U\o u$.    
\end{definition}

If $\cC$ is a pseudo-tensor category, then the condition $\U(\cC)=\Uex(\cC)$ is necessary for the existence of an abelian envelope for $\cC$: in fact, the following is true (sufficient conditions will be discussed in \Cref{sec:ab-env-diag}):

\begin{proposition}[\cite{CEOP}*{Proposition~2.3.3}] \label{prop:U-Uex} If a pseudo-tensor category $\cD$ admits a fully faithful $\kk$-linear monoidal functor to a tensor category, then $\U(\cD)=\Uex(\cD)$.
\end{proposition}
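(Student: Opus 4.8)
The plan is to verify the defining universal property of $\Uex$ directly, importing the one nontrivial exactness statement from the tensor category $\cT$ along the fully faithful functor $F$. Fix a nonzero $u\in\cD(U,\one)$ and set $d:=u\o U-U\o u\colon U\o U\to U$; we must show that $u$ is the cokernel of $d$ in $\cD$, i.e.\ that $u\in\Uex(\cD)$. One half is formal: a routine computation with the unit constraints gives $u\circ(u\o U)=u\circ(U\o u)$, so $u\circ d=0$ in any monoidal category.

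For the universal half I would pass through $\cT$. Since $F$ is monoidal, $F(\one)\cong\one$, and full faithfulness yields a natural bijection $\cD(\one,X)\cong\cT(\one,F(X))$ for every $X\in\cD$. It therefore suffices to prove that $F(u)$ is the cokernel of $F(d)=F(u)\o F(U)-F(U)\o F(u)$ in $\cT$: given $g\colon U\to X$ in $\cD$ with $g\circ d=0$, applying $F$ gives $F(g)\circ F(d)=0$, so $F(g)$ factors as $\tilde h\circ F(u)$ for a unique $\tilde h\colon\one\to F(X)$, and this $\tilde h$ descends through the above bijection to a unique $h\colon\one\to X$ in $\cD$ with $g=h\circ u$ (uniqueness using that $F(u)$ is an epimorphism).

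The heart of the matter is thus the exactness of
\[
U\o U\xrightarrow{\ d\ }U\xrightarrow{\ u\ }\one\longrightarrow 0
\]
in $\cT$, where I now abbreviate $F(u),F(U)$ by $u,U$. As $\cT$ is a tensor category its unit $\one$ is simple (a standard fact, cf.\ \cite{EGNO}), so the nonzero morphism $u$ is an epimorphism and hence the cokernel of its kernel $i\colon K=\ker u\hookrightarrow U$; it remains to prove $\Img d=K$. The inclusion $\Img d\subseteq K$ is immediate from $u\circ d=0$. For the reverse inclusion I would use biexactness of the tensor product in $\cT$: applying the exact functor $-\o U$ to $0\to K\xrightarrow{\,i\,}U\xrightarrow{\,u\,}\one\to0$ identifies $K\o U$ with $\ker(u\o U)$ via $i\o U$, and on this subobject the first summand of $d$ vanishes while the second restricts to $i\circ(K\o u)$. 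Since $K\o u$ is epi onto $K$ (apply the exact functor $K\o-$ to the epimorphism $u$), we obtain $\Img d\supseteq\Img\big(d\circ(i\o U)\big)=i(K)=K$, so $\Img d=K=\ker u$. This makes $u=\operatorname{coker}(d)$ in $\cT$ and completes the descent.

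The main obstacle is exactly this last exactness in $\cT$. The naive proof that works in $\Vect$ — pick a preimage of the unit under $u$ and use it to split off $\ker u$ — has no categorical counterpart, since $u$ need not split and there need be no section $\one\to U$; indeed $\ker u$ generally is not a retract of $U$. The remedy is to exhibit $\ker u$ itself as the tensor factor $K$ inside $K\o U=\ker(u\o U)$ and to exploit that $-\o U$ and $K\o-$ are exact, which is where the hypothesis that $\cT$ is abelian with exact tensor product is essential; the simplicity of $\one$, needed to make $u$ an epimorphism (so that the cokernel of $d$ is $\one$ rather than merely $\Img u$), is the second indispensable input.
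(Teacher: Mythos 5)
The paper does not prove this proposition; it imports it verbatim from \cite{CEOP}*{Proposition~2.3.3}, so there is no in-paper argument to compare against. Your proof is correct and is the expected one: the only real content is the exactness of $U\o U\xrightarrow{d}U\xrightarrow{u}\one\to 0$ in the target tensor category, which you establish properly from simplicity of $\one$ (so the nonzero $u$ is epi, using that $F$ is faithful to know $F(u)\neq 0$) together with biexactness of the tensor product (identifying $\Img d=\ker u$ via the epimorphism $K\o u$), after which full faithfulness of the monoidal embedding transports the cokernel property back to $\cD$.
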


Assume $\cC$ is $\kk$-linear rigid monoidal. For objects $U,V\in\cC$, let $\coev_U\:\one\to U\o U^*$ be the coevaluation morphism and define the linear map
$$
\theta_{U,V}\:\cC(U,V)\to\cC(\one,V\o U^*), \quad g\mapsto (g\o U^*) \coev_U 
\qquad\tforall U,V\in\cC.
$$

\begin{proposition} \label{prop:U-equals-Uex} Assume a $\kk$-linear rigid monoidal category $\cC$ has properties  \hyperref[cond:ex1]{(Ex1)} and \hyperref[cond:ex2]{(Ex2)}. Then $\U(\cC)=\Uex(\cC)$. In particular, in any pseudo-diagrammatic or $\oplus$-pseudo-diagrammatic category $\cC$ that is rigid, we have $\U(\cC)=\Uex(\cC)$.
\end{proposition}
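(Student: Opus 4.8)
The plan is to prove the inclusion $\U(\cC)\subseteq\Uex(\cC)$, the reverse holding by definition. I would fix a nonzero $u\in\cC(U,\one)$ and set $d:=u\o U-U\o u\colon U\o U\to U$; by \Cref{def:U-Uex} the task is to show that $u$ is the cokernel of $d$. Equivalently, I would show that for every object $Z$ the sequence of $\kk$-vector spaces
\[
0\to\cC(\one,Z)\xrightarrow{-\circ u}\cC(U,Z)\xrightarrow{-\circ d}\cC(U\o U,Z)
\]
is exact. That $u\circ d=0$ is immediate from bifunctoriality: both $u\circ(u\o U)$ and $u\circ(U\o u)$ equal the morphism $u\o u$ after the unit isomorphism $\one\o\one\cong\one$. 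So the content is exactness at the two remaining spots, namely injectivity of $-\circ u$ (uniqueness in the universal property) and the inclusion $\ker(-\circ d)\subseteq\operatorname{im}(-\circ u)$ (existence).

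Injectivity is where \hyperref[cond:ex1]{(Ex1)} enters, and it is easy: the map $-\circ u$ factors as $\cC(\one,Z)\xrightarrow{-\,\o\, u}\cC(\one,Z)\o_\kk\cC(U,\one)\xrightarrow{\psi_{U,Z}}\cC(U,Z)$, since $\psi_{U,Z}(h\o u)=h\circ u$. The first map is injective because $u\neq0$ (tensoring over $\kk$ with a fixed nonzero vector is injective), and $\psi_{U,Z}$ is injective by \hyperref[cond:ex1]{(Ex1)}; hence so is the composite.

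The main step is the remaining inclusion, where I would use rigidity together with \hyperref[cond:ex2]{(Ex2)}. Writing $\tilde g:=\theta_{U,Z}(g)$ and $\tilde u:=\theta_{U,\one}(u)\in\cC(\one,U^*)$, a direct computation with the coevaluation morphisms $\coev_U$ shows that, under the rigidity isomorphism $\cC(U\o U,Z)\cong\cC(U,Z\o U^*)$ that dualizes the second tensor factor, the morphisms $g\circ(u\o U)$ and $g\circ(U\o u)$ correspond to $\tilde g\circ u$ and $g\o\tilde u$ respectively. As this isomorphism is $\kk$-linear, the hypothesis $g\circ d=0$ is equivalent to the single identity
\[
\tilde g\circ u = g\o\tilde u \qquad\text{in }\cC(U,Z\o U^*).
\]
Its left-hand side manifestly factors through $\one$, hence so does $g\o\tilde u$, and \hyperref[cond:ex2]{(Ex2)} then forces $g$ itself to be a sum of morphisms factoring through $\one$, say $g=\sum_i b_i\circ a_i$ with $a_i\in\cC(U,\one)$ and $b_i\in\cC(\one,Z)$. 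I would feed this back into the displayed identity: both sides lie in the image of $\psi_{U,Z\o U^*}$, the left being $\psi_{U,Z\o U^*}(\tilde g\o u)$ and, using $g=\sum_i b_i a_i$, the right being $\psi_{U,Z\o U^*}\big(\sum_i (b_i\o\tilde u)\o a_i\big)$. Injectivity from \hyperref[cond:ex1]{(Ex1)} then upgrades it to the equality $\tilde g\o u=\sum_i(b_i\o\tilde u)\o a_i$ in $\cC(\one,Z\o U^*)\o_\kk\cC(U,\one)$. Extending $\{u\}$ to a basis of $\cC(U,\one)$ and comparing the $u$-components yields $\tilde g=\big(\sum_i c_i b_i\big)\o\tilde u$ for suitable $c_i\in\kk$; setting $h:=\sum_i c_i b_i$ this reads $\tilde g=\theta_{U,Z}(h\circ u)$, whence $g=h\circ u$ by injectivity of $\theta_{U,Z}$.

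I expect the transposition step—rewriting $g\circ d=0$ as $\tilde g\circ u=g\o\tilde u$ and recognizing that the right-hand side factors through $\one$—to be the conceptual crux, since it is precisely what makes \hyperref[cond:ex2]{(Ex2)} applicable; the subsequent extraction of a single $h$ via \hyperref[cond:ex1]{(Ex1)} and a basis of $\cC(U,\one)$ is then bookkeeping. Finally, the ``in particular'' assertion is immediate: a rigid pseudo-diagrammatic category has \hyperref[cond:ex1]{(Ex1)} and \hyperref[cond:ex2]{(Ex2)} by \Cref{lem:diag-v-ex12}, and a rigid $\oplus$-pseudo-diagrammatic one by \Cref{cor:add-pseudo-diagrammatic}, so the first part applies in both cases.
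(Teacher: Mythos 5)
Your proposal is correct and follows essentially the same route as the paper's proof: transpose the relation $g\circ(u\o U)=g\circ(U\o u)$ via $\theta$ into $\tilde g\circ u=g\o\tilde u$, observe the left side factors through $\one$ so that \hyperref[cond:ex2]{(Ex2)} gives $g=\sum_i b_ia_i$, and then use \hyperref[cond:ex1]{(Ex1)} to compare coefficients against a linearly independent set containing $u$ and extract the unique factorization $g=h\circ u$. The only differences are bookkeeping (the paper normalizes the decomposition so that $u_1=u$ and kills the other $v_i$, while you expand the $a_i$ in a basis extending $\{u\}$ and invert $\theta$ at the end), and your explicit verification of $u\circ d=0$ and of the injectivity of $-\circ u$, which the paper leaves implicit.
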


\begin{proof} Being pseudo-diagrammatic or $\oplus$-pseudo-diagrammatic each implies \hyperref[cond:ex1]{(Ex1)} and \hyperref[cond:ex2]{(Ex2)} by \Cref{lem:diag-v-ex12} or \Cref{cor:add-pseudo-diagrammatic}, respectively, which reduces the second assertion to the first one.

So assume $\cC$ has properties \hyperref[cond:ex1]{(Ex1)} and \hyperref[cond:ex2]{(Ex2)}. Let $U\xrightarrow{u}\one$ and $U\xrightarrow{f} V$ be morphisms in $\cC$ such that 
$$ f(u\otimes U)=f(U\otimes u), 
\quad\text{i.e.,}\quad
u\o f = f\o u . $$
Then applying the linear mapping $g\mapsto (g\o U^*)(U\o\coev_U)$ on both sides yields
$$ \theta_{U,V}(f) u
 = f\o \theta_{U,\one}(u) .
$$ 
The factorization on the left-hand side shows that this morphism factors through $\one$. Now \hyperref[cond:ex2]{(Ex2)} implies that $f$ can be written as $\sum_{i=1}^t v_i u_i$ for some $t\ge0$ with morphism $u_i\in\cC(U,\one)$ and $v_i\in\cC(\one,V)$. Without loss of generality, we may pick the $(u_i)_i$ linearly independent and such that $u_1=u$. Then
$$
\theta_{U,V}(f)u = \sum_{i=1}^t (v_i\o\theta_{U,\one}(u)) u_i ,
$$
and \hyperref[cond:ex1]{(Ex1)} implies 
$$
\theta_{U,V}(f) \o_\kk u = \sum_{i=1}^t (v_i\o\theta_{U,\one}(u)) \o_\kk u_i.
$$
Comparing coefficients of $u_i$, for $i>1$, we get
$$ 0 = v_i\o \theta_{U,\one}(u) ,
$$
to which we apply the linear mapping $g\mapsto (V\o ev_U)(g\o U)$, yielding
$$
0 = v_i u
$$
so $v_i=0$, again using \hyperref[cond:ex1]{(Ex1)}. This means $f=v_1 u$, and $v_1$ is uniquely determined, once more by \hyperref[cond:ex1]{(Ex1)}. This means $u$ is a coequalizer of $U\o u$ and $u\o U$.
\end{proof}

\subsection{Abelian envelopes for pseudo-diagrammatic categories} \label{sec:ab-env-diag}

Having seen that pseudo-diagrammatic categories satisfy a certain necessary condition for the existence of abelian envelopes in \Cref{prop:U-equals-Uex}, we come to sufficient conditions now.

\begin{definition} \label{def:splitting} A morphism $f$ of a category $\cC$ is called \emph{split} if there is a morphism $g$ in $\cC$ such that $f=fgf$. Assume $\cC\subset\cD$ are additive monoidal categories. A \emph{left splitting object in $\cD$} of a morphism $f$ in $\cC$ is an object $0\not\cong X\in\cD$ such that $X\o f$ is split. Right splitting objects are defined similarly. We say $\cC$ \emph{has splitting objects in $\cD$} if for any morphism in $\cC$ there is a left and a right splitting object in $\cD$.
\end{definition}

We will say an additive monoidal category $\cC$ \emph{has splitting objects} if it has splitting objects in $\cC$. Another situation that will be relevant for us will come from the embedding $\cC\subset\Ind(\cC)$ of a category $\cC$ in its ind-completion.

\begin{example}\label{ex:semisimple-splitting} (a) A monomorphism $f$ is split if and only if there is a morphism $g$ such that $gf$ is an identity. An epimorphism $f$ is split if and only if there is a morphism $g$ such that $fg$ is an identity. In an abelian category, a morphism is split if and only if both factors in an epi-mono-factorization are split. Hence, in a semisimple category, any morphism is split, and in a semisimple tensor category, the tensor unit, or any object, is a splitting object for any morphism. 

(b) More generally, any projective object in a tensor category is a splitting object for any morphism:  Indeed, let $f$ be a morphism in a tensor category with image factorization $f=f_2f_1$ and cokernel $k$. Then the epimorphisms $f_1$ and $k$ become epimorphisms between projective objects, hence split, upon tensoring with any projective object. As $k$ forms a short exact sequence with $f_2$, this implies that $f_2$ becomes split upon tensoring with any projective object. Hence, $f$ becomes split upon tensoring with any projective object.

(c)  In fact, assume $P$ is a splitting object for all morphisms in a tensor category. Then 
$$ \Hom(-,P^*) \cong \Hom(-\o P, \one)
$$
is an exact functor, as any functor preserves split exact sequences, so $P^*$ is injective, hence, $P$ is projective. With (b), it follows that an object in a tensor category is a splitting object iff it is projective.
\end{example}

Splitting objects play an important role for the existence of abelian envelopes. An abelian envelope is said to have the \emph{quotient property} if any object in the abelian envelope is a quotient of an object in the original category.

\begin{theorem}[\cite{CEOP}*{Theorem~3.2.1, Remark~3.2.2}] \label{abelian-envelopes}  A pseudo-tensor category $\cD$ has an abelian envelope with the quotient property if $\U(\cD)=\Uex(\cD)$ and 

(a) it has splitting objects, or 

(b) it has a braiding and splitting objects in its ind-completion.
\end{theorem}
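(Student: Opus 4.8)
The plan is to realize the abelian envelope concretely as a category of finitely presented functors on $\cD$, to equip it with an exact rigid monoidal structure, and then to verify the defining universal property directly; the splitting objects and the exactness condition $\U(\cD)=\Uex(\cD)$ enter to guarantee, respectively, abelianness together with biexactness of the tensor product, and the correct behavior of the unit. I would treat case (a) in detail and reduce case (b) to it by working inside $\Ind(\cD)$.

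\emph{Construction and the quotient property.} First I would set $\cA$ to be the category of finitely presented $\kk$-linear functors $\cD\op\to\Vect$, i.e. those $M$ fitting into a presentation $\cD(-,D_1)\to\cD(-,D_0)\to M\to 0$ with $D_0,D_1\in\cD$, together with the Yoneda embedding $h\colon\cD\to\cA$, $D\mapsto\cD(-,D)$. By the Yoneda lemma $h$ is automatically fully faithful and $\End_\cA(h(\one))=\End_\cD(\one)=\kk$. Every object of $\cA$ is by definition a cokernel of a map between representables, hence a quotient of some $h(D_0)$, which yields the quotient property once $\cA$ is known to be a tensor category. The monoidal product of $\cD$ extends to $\cA$ by Day convolution, with unit $h(\one)$, and rigidity of $\cD$ provides duals of representables, hence of all finitely presented functors, so $\cA$ is rigid monoidal and essentially small.

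\emph{Abelianness and biexactness.} By Freyd's theorem, $\cA$ is abelian as soon as $\cD$ has weak kernels, and this is exactly what the splitting objects provide. Given $f\colon A\to B$, I would pick a left splitting object $X$ so that $X\o f$ is split and therefore has a genuine kernel $k\colon K\to X\o A$ that is a direct summand. Using the dual of $X$ together with the (co)evaluation morphisms available from rigidity, and the faithfulness of tensoring by the nonzero object $X$, one transports $k$ to a weak kernel of $f$ itself; applying the argument on both sides and invoking rigidity once more shows that $Y\o-$ and $-\o Y$ are exact on $\cA$ for all $Y$, so the tensor product is biexact. Producing these weak kernels coherently from the per-morphism splitting objects, and verifying biexactness, is the main technical obstacle of the whole argument.

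\emph{Universal property and case (b).} To finish case (a) I would verify the universal property: given a tensor category $\cT'$ and a faithful monoidal $G\colon\cD\to\cT'$, define the extension on a presentation by $\widetilde G(M):=\operatorname{coker}(G(D_1)\to G(D_0))$. Its well-definedness, monoidality, right-exactness, and essential uniqueness follow from the universal property of finitely presented functors, while its faithfulness — equivalently exactness, since faithful monoidal functors between tensor categories are exact — is where $\U(\cD)=\Uex(\cD)$ is used: it guarantees that each $u\colon U\to\one$ is the cokernel of $u\o U-U\o u$, a relation $\widetilde G$ must and does respect, so that no nonzero morphism of $\cA$ is killed. Composing with $h$ recovers $G$, giving the equivalence $\Ten^{\text{exact}}(\cA,\cT')\simeq\Ten^{\text{faith}}(\cD,\cT')$. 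For case (b) I would run the identical construction with $\Ind(\cD)$ in place of $\cD$, where the splitting objects now live; the braiding is what lets the monoidal structure and duals pass to the ind-completion and keeps the Day convolution exact, so that the weak-kernel argument goes through with ind-splitting objects. The price is that the splitting objects are no longer in $\cD$, so the envelope need only have the quotient property rather than enough projectives, and the extra point to check here is that the finitely presented functors still form an essentially small category with finite-dimensional Hom-spaces.
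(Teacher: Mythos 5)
The paper does not prove this statement; it is imported verbatim from \cite{CEOP}*{Theorem~3.2.1}, so the only internal comparison available is with the related construction in \Cref{thm:ab-env-projectives}. Measured against either, your proposal has a genuine gap, and it is located at the very first step: you take finitely presented functors on \emph{all} of $\cD$. In $\rmod{\cD}$ every representable is projective (\Cref{remark:representables_are_projective}), so your candidate envelope always has enough projectives and realizes every object of $\cD$ as a projective object of the envelope. Both consequences are false in general. Concretely, let $\cD$ be the pseudo-abelian subcategory generated by the projective objects and the tensor unit in a non-semisimple tensor category $\cC$ with enough projectives, as in \Cref{expl:projectives}: the hypotheses of case (a) hold and the abelian envelope is $\cC$ itself, but $h(\one)$ is projective in $\rmod{\cD}$ while $\one$ is not projective in $\cC$, so $\rmod{\cD}\not\simeq\cC$ compatibly with the embeddings. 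This is why the construction that does work (\Cref{thm:ab-env-projectives}, following \cite{BEO}) takes finitely presented functors only on the subcategory $\cS$ of \emph{global} splitting objects; the technical steps you compress into one sentence (exactness of the Day tensor product, dualizability of cokernels of representables, the universal property) all hinge on properties special to $\cS$ --- injectivity of its objects in $\rmod{\cS}$, the coincidence of weak kernel and weak cokernel sequences, the internal homs $[M,-]$ preserving $\cS$ --- none of which hold over $\rmod{\cD}$. Your weak-kernel argument from per-morphism splitting objects is essentially fine (take $X^*\o K\to A$ induced by the kernel $K$ of $X\o f$ and evaluation), but it only shows $\rmod{\cD}$ is abelian, not that it is the envelope.

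Even after correcting the base category to $\cS$, the argument only reaches the situation of \Cref{thm:ab-env-projectives}: a nonzero global splitting object. Case (a) as stated assumes merely one splitting object per morphism, which need not yield a global one, and in case (b) the envelope can fail to have enough projectives altogether (as the introduction notes), so no category of the form $\rmod{(-)}$ can realize it. Running your construction over $\Ind(\cD)$ does not repair this: $\Ind(\cD)$ is neither essentially small nor rigid, its finitely presented functors do not have finite-dimensional hom-spaces, and being a quotient of an ind-object is not the quotient property with respect to $\cD$. You also misattribute the role of the braiding, which is not needed to transport the monoidal structure to $\Ind(\cD)$ but serves to make the class $\cU(\cD)$ permutable in the sense of \cite{CEOP}. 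The proof of the general statement in \cite{CEOP} proceeds by a genuinely different route and cannot be recovered from the finitely-presented-functor construction alone.
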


\begin{remark} \cite{CEOP}*{Theorem~3.2.1} has even weaker assumptions on $\cD$, which we do not use here. In particular, in situation (b), instead of $\cD$ having a braiding, it suffices that the class of morphisms $\cU(\cD)$ is what is called \emph{permutable} in \cite{CEOP}. This follows, for instance, if the forgetful functor from the monoidal center $\cZ(\cD)$ to $\cD$ is essentially surjective (see \cite{CEOP}*{Example~3.1.2}). The latter follows from $\cD$ having a braiding.
\end{remark}

\begin{example} \label{expl:projectives} Let $\cC$ be a tensor category with enough projectives, let $\cD$ be the pseudo-abelian subcategory generated by all projective objects and the tensor unit in $\cC$. Then $\cD$ satisfies the condition $\U(\cD)=\Uex(\cD)$ by \Cref{prop:U-Uex} and has splitting objects by \Cref{ex:semisimple-splitting}(b). Hence it has an abelian envelope by \Cref{abelian-envelopes}. In fact, this envelope is $\cC$, as follows from \cite{CEOP}*{Theorem~2.2.1}. 
    
\end{example}

\begin{corollary} \label{cor:abenv-for-diag}
A pseudo-tensor category $\cD$ with properties \hyperref[cond:ex1]{(Ex1)} and \hyperref[cond:ex2]{(Ex2)} as in \Cref{def:ex} has an abelian envelope with the quotient property if 
\begin{enumerate}
    \item [(a)] it has splitting objects, or 
    \item [(b)] it has a braiding and splitting objects in its ind-completion. 
\end{enumerate}
In particular, this applies to any $\oplus$-pseudo-diagrammatic pseudo-tensor category.
\end{corollary}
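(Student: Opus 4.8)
The plan is to assemble the statement directly from \Cref{prop:U-equals-Uex} and \Cref{abelian-envelopes}, since the preceding results have already isolated the two ingredients that \Cref{abelian-envelopes} requires. First I would observe that, by definition, a pseudo-tensor category is a $\kk$-linear rigid monoidal category, so the rigidity hypothesis of \Cref{prop:U-equals-Uex} is automatically met. Hence, for a pseudo-tensor category $\cD$ satisfying \hyperref[cond:ex1]{(Ex1)} and \hyperref[cond:ex2]{(Ex2)}, \Cref{prop:U-equals-Uex} yields the necessary exactness condition $\U(\cD)=\Uex(\cD)$.

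With this condition in hand, the two cases become immediate invocations of \Cref{abelian-envelopes}: in case (a), $\cD$ additionally has splitting objects, and in case (b), $\cD$ additionally has a braiding together with splitting objects in its ind-completion. Either of these, combined with the just-established equality $\U(\cD)=\Uex(\cD)$, is exactly the hypothesis of \Cref{abelian-envelopes}(a) or (b), respectively, and the conclusion that $\cD$ has an abelian envelope with the quotient property follows verbatim.

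For the final ``in particular'' clause, I would simply note that by \Cref{cor:add-pseudo-diagrammatic} every $\oplus$-pseudo-diagrammatic category---and, as a special case, every pseudo-diagrammatic one---automatically enjoys properties \hyperref[cond:ex1]{(Ex1)} and \hyperref[cond:ex2]{(Ex2)}. Thus the conditional just proved applies to such categories without further verification of the exactness hypotheses, so that an $\oplus$-pseudo-diagrammatic pseudo-tensor category satisfying (a) or (b) has an abelian envelope with the quotient property.

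There is no genuine obstacle in this corollary: the substantive work has already been carried out in \Cref{prop:U-equals-Uex} (deriving the exactness condition $\U(\cD)=\Uex(\cD)$ from the combinatorial hypotheses) and is imported wholesale from \cite{CEOP} via \Cref{abelian-envelopes}. The only point deserving a moment's attention is the bookkeeping remark that the rigidity needed to apply \Cref{prop:U-equals-Uex} is built into the definition of a pseudo-tensor category, so that no hypothesis beyond \hyperref[cond:ex1]{(Ex1)}, \hyperref[cond:ex2]{(Ex2)}, and (a)/(b) is silently required.
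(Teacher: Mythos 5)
Your proposal is correct and follows exactly the paper's own argument: the paper's proof consists of the single observation that $\U(\cD)=\Uex(\cD)$ holds by \Cref{prop:U-equals-Uex}, after which \Cref{abelian-envelopes} gives the conclusion, and the ``in particular'' clause is \Cref{cor:add-pseudo-diagrammatic}. Nothing further is needed.
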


\begin{proof}
    The necessary exactness condition $\U(\cD)=\Uex(\cD)$ is satisfied by \Cref{prop:U-equals-Uex}.
\end{proof}

\section{Families of pseudo-diagrammatic categories} \label{sec:families}

We will show next that several interesting families of categories are pseudo-diagrammatic.

\subsection{Categories based on cobordisms or partitions}

Recall from \Cref{bg:Cob}, that the hom-spaces $\Cob_{\alpha,u}([m],[n])$ in the cobordism categories $\Cob_{\alpha,u}$, for any fixed choice of the parameters $\alpha,u$, have bases given by sets of cobordisms.

\begin{proposition} \label{prop:diag-for-cob}
Let $\cC$ be any pseudo-tensor subcategory of $\Cob_{\alpha,u}$ containing the object $[1]$ such that the hom-spaces $\cC([m],[n])$ are spanned by $\Cb_{m,n}\cap\cC([m],[n])$. Then $\cC$ is $\oplus$-pseudo-diagrammatic.
\end{proposition}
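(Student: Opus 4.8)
The plan is to produce the pseudo-diagrammatic subcategory $\cC_0$ required by \Cref{def:pseudo-diagrammatic-plus} and then reduce everything to it via \Cref{lem:additive-pseudo-diagrammatic}. Since $[1]\in\cC$ and $\cC$ is monoidal, we have $[m]=[1]^{\o m}\in\cC$ for every $m\ge0$; let $\cC_0\subseteq\cC$ be the full $\kk$-linear monoidal subcategory on the objects $\{[m]\}_{m\ge0}$. Because $\cC$ is a pseudo-abelian subcategory of $\Cob_{\alpha,u}$ containing all the $[m]$, and $\Cob_{\alpha,u}$ is the pseudo-abelian completion of its full subcategory on the $[m]$, every object of $\cC$ is a direct summand of a finite direct sum of objects of $\cC_0$. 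Thus it suffices to prove that $\cC_0$ is pseudo-diagrammatic in the sense of \Cref{def:pseudo-diagrammatic}; the passage to $\cC$ is then automatic.

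For the bases required by condition (Diag), I would take $B_{[m],[n]}$ to be the set of those (reduced) cobordisms in the fixed cobordism basis of $\Cob_{\alpha,u}([m],[n])$ that lie in the subspace $\cC([m],[n])$. These are linearly independent, being a subset of a basis of the ambient hom-space, and by hypothesis the cobordisms in $\cC([m],[n])$ span it, so $B_{[m],[n]}$ is indeed a basis of $\cC_0([m],[n])=\cC([m],[n])$.

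Next I would verify (Diag)(a). The tensor product in $\Cob_{\alpha,u}$ is disjoint union together with an order-preserving relabeling of boundary circles; hence for basis cobordisms $b,b'$ the morphism $b\o b'$ is again a reduced cobordism (reduction of handles and of closed components is local to each connected component, so a disjoint union of reduced cobordisms is reduced), and it lies in $\cC$ since $\cC$ is closed under $\o$. Therefore $b\o b'\in B_{[m]\o[m'],[n]\o[n']}$, and the assignment $(b,b')\mapsto b\o b'$ is injective because the relabeling records which boundary circles originate from $b$ and which from $b'$, so both factors can be recovered.

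The main work, and the step I expect to be the main obstacle, is (Diag)(b). Here I would use the observation recorded after \Cref{def:pseudo-diagrammatic}, that a factorization through $\one=[0]$ is the same as a tensor-product decomposition and can therefore be checked purely combinatorially, without reference to the composition in $\cC$: a cobordism factors through $\one$ exactly when none of its connected components meets both an upper and a lower boundary circle, equivalently when it decomposes as a disjoint union of a cobordism with only upper boundary and one with only lower boundary. Since forming $b\o b'$ is a disjoint union, the connected components of $b\o b'$ are precisely those of $b$ together with those of $b'$, and no new component joining the two factors is created. Consequently $b\o b'$ has a component meeting both boundaries if and only if $b$ or $b'$ does, so $b\o b'$ factors through $\one$ if and only if both $b$ and $b'$ do, which is exactly (Diag)(b). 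The care needed here is in pinning down the combinatorial description of factoring through $\one$ for cobordisms and in checking that the tensor product neither merges nor splits connected components; once this is done, (Diag)(a) and (Diag)(b) together with \Cref{lem:additive-pseudo-diagrammatic} complete the proof.
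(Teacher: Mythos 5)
Your proposal is correct and follows essentially the same route as the paper: take as $\cC_0$ the full monoidal subcategory on the objects $[m]$, use as bases the reduced cobordisms (no closed components, bounded genus) lying in $\cC$, observe that the tensor product is disjoint union and hence injective on these bases with recoverable factors, and characterize factoring through $\one=[0]$ as no connected component meeting both upper and lower boundary circles. The only difference is that you spell out the reduction to $\cC_0$ via \Cref{lem:additive-pseudo-diagrammatic} explicitly, which the paper leaves implicit.
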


\begin{proof}
From the definition of the composition in $\Cob_{\alpha,u}$ it follows that, for all $m,n\ge0$, the subsets $\Cb'_{m,n}\subset\Cb_{m,n}$ of cobordisms with no closed components and with all components of genus at most $\deg(u)-1$ form bases in $\Cob_{\alpha,u}([m],[n])$.
We set $B_{m,n}:=\Cb'_{m,n}\cap\cC([m],[n])$ for all $m,n$. By our assumptions, $B_{m,n}$ is a basis of $\cC([m],[n])$. 

Recall also that for $m,n,k,\ell\ge0$, the map $B_{m,n}\times B_{k,\ell}\to B_{m+k,n+\ell}$ induced by the tensor product is given by vertical concatenation of cobordisms. This is an injective map, it can be inverted on its image by picking from a cobordism those components whose boundary is formed by the leftmost or rightmost, respectively, upper and lower boundary circles, recovering its tensor product decomposition. 

If such a tensor product factors through $\one=[0]$, this means no connected component of the cobordism contains both upper and lower boundary circles. 

We have verified condition \hyperref[cond:diag]{(Diag)}.
\end{proof}

This directly applies to the subcategories of $\Cob_{\alpha,u}$ discussed in \Cref{bg:RepSt}.

\begin{corollary} \label{prop:U-Uex-St} The following pseudo-tensor categories are $\oplus$-pseudo-diagrammatic, and hence $\U=\Uex$ for each of them: $\RepSt$, $\RepOt$, pseudo-abelian completions of the Temperley--Lieb categories, $\RepHt$, all ``interpolating partition categories'' from \cite{FM}.
\end{corollary}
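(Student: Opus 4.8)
The plan is to realize every category in the list as a pseudo-tensor subcategory of a single cobordism category, namely $\cS_t=\Cob_{\alpha,u}$ with $\alpha=(t,t,\dots)$ and $u(x)=x-1$, of exactly the shape demanded by \Cref{prop:diag-for-cob}: one containing $[1]$ and with hom-spaces spanned by the partition diagrams it contains. Granting this, each is $\oplus$-pseudo-diagrammatic by \Cref{prop:diag-for-cob}, and since a pseudo-tensor category is rigid by definition, \Cref{prop:U-equals-Uex} then gives $\U=\Uex$ at once. Concretely, each family corresponds to a subclass $Q$ of partition diagrams that is closed under composition and tensor product, after which one passes to the pseudo-abelian completion; the only point needing care is producing such a $Q$ for each named category and checking it really models that category.

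First I would treat the cases that are subcategories of $\cS_t$ by construction. For $\RepSt=\cS_t$ there is nothing to restrict, and \Cref{prop:diag-for-cob} applies directly. For $\RepHt=\cH_t$ and for $\cS'_t$ the sets $Q$ (partition diagrams with only even components, respectively with an even number of odd components) are precisely those fixed in \Cref{bg:RepSt}, so these are pseudo-tensor subcategories of $\cS_t$ with hom-spaces spanned by $\Cb_{m,n}\cap\cC([m],[n])$, and the proposition applies. The identical argument covers every ``interpolating partition category'' of \cite{FM}, each of which is by construction the pseudo-abelian completion of a $\kk$-linear monoidal subcategory of $\cS_t$ spanned by a suitable set of partition diagrams.

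The remaining two families need a diagrammatic model inside $\cS_t$. For the pseudo-abelian completion of the Temperley--Lieb category I would take $Q$ to be the non-crossing partition diagrams all of whose components have size $2$, as recalled at the end of \Cref{bg:RepSt}. The step I expect to be the main obstacle is $\RepOt$: here I would identify it with the Brauer category by using the self-duality of the canonical Frobenius algebra $[1]$, with evaluation $\varepsilon\mu$ and coevaluation $\Delta\eta$, to realize all Brauer diagrams as morphisms of $\cS_t$, taking $Q$ to be the perfect matchings (partition diagrams all of whose components have size exactly $2$). One checks that $Q$ is closed under composition and tensor product and that a closed loop evaluates to $\alpha_0=t$, matching the loop parameter of $\RepOt$; granting that $\RepOt$ is the pseudo-abelian completion of this subcategory, its hom-spaces are spanned by the contained partition diagrams and \Cref{prop:diag-for-cob} applies once more. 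In every case the cap $\varepsilon\mu$ and cup $\Delta\eta$ lie in the chosen $Q$, so $[1]$ is self-dual within the subcategory and the latter is rigid, which is what was needed to invoke \Cref{prop:U-equals-Uex} and conclude $\U=\Uex$.
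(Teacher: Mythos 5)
Your proposal is correct and follows the same route the paper intends: realize each category as a pseudo-tensor subcategory of $\cS_t=\Cob_{\alpha,u}$ containing $[1]$ whose hom-spaces are spanned by the partition diagrams (equivalently cobordisms) it contains, then apply \Cref{prop:diag-for-cob} followed by \Cref{prop:U-equals-Uex}. The paper leaves all of this implicit ("this directly applies to the subcategories discussed in \Cref{bg:RepSt}"), and your extra care with $\RepOt$ — identifying it with the pseudo-abelian completion of the perfect-matching (Brauer) subcategory of $\cS_t$, with matching loop parameter and self-duality of $[1]$ witnessed by diagrams in $Q$ — correctly fills in the one case the background section does not spell out.
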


\subsection{Knop's tensor envelopes}

We recall some definitions from \cite{Knop}. Let $\cA$ be any regular category, in particular, $\cA$ has finite products and a form of image factorizations, as explained in the following.

Recall that a \emph{subobject} of an object $X\in\cA$ is an isomorphism class of monomorphisms into $X$. We write $X'\subset X$ to indicate there is a monomorphism $X'\to X$ in $\cA$, which is fixed but suppressed, representing a subobject of $X$ which we will sometimes call $X'$ by abuse of notation.
A \emph{relation} in $\cA$ between two objects $X,Y$ is a subobject of $X\x Y$. Let $R_{X,Y}$ denote the collection of relations of between $X$ and $Y$.

As $\cA$ is assumed regular, any morphism $f:X\to Y$ in $\cA$ factors as $f=X\xrightarrow{e} I\xrightarrow{i} Y$, where $e$ is an extremal epimorphism and $i$ is a monomorphism in $\cA$, and the factorization is unique up to isomorphisms. This factorization is called \emph{image factorization}, and the unique subobject represented by the monomorphism $i:I\to Y$ is called the \emph{image} of $f$, $\im(f)$.

We will require the following general property of regular categories.

\begin{lemma} \label{lem:product-of-subobjects} Let $X_1$, $X_2$, $Y_1$, $Y_2$ be objects and let $A\subset X_1\times X_2$, $B\subset Y_1\times Y_2$, $A'\subset X_1\times Y_1$, $B'\subset X_2\times Y_2$ be subobjects in  $\cA$. Assume $A\times B=A'\times B'$ as subobjects of $X_1\times X_2\times Y_1\times Y_2$ (after permutation of factors). Then there are subobjects $A_1\subset X_1$, $A_2\subset X_2$, $B_1\subset Y_1$, $B_2\subset Y_2$ such that $A'=A_1\times B_1$ and $B'=A_2\times B_2$ as subobjects.
\end{lemma}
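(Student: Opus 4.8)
The plan is to produce the four subobjects explicitly as images of coordinate projections and then verify the two claimed product identities by reducing the equality of subobjects to an elementwise computation. Concretely, using the image factorizations guaranteed by regularity of $\cA$, set
$A_1 := \im(A\hookrightarrow X_1\times X_2\to X_1)$ and $A_2 := \im(A\hookrightarrow X_1\times X_2\to X_2)$, and likewise $B_1 := \im(B\hookrightarrow Y_1\times Y_2\to Y_1)$ and $B_2 := \im(B\hookrightarrow Y_1\times Y_2\to Y_2)$. These are subobjects of $X_1,X_2,Y_1,Y_2$ respectively, and the content of the lemma becomes the two subobject identities $A'=A_1\times B_1$ in $X_1\times Y_1$ and $B'=A_2\times B_2$ in $X_2\times Y_2$.

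To check these identities I would argue with (generalized) elements. Both sides are built entirely from finite limits, image factorizations, and products of subobjects, and the hypothesis $A\times B=A'\times B'$ is of the same form; since all of these are operations of regular logic, it suffices to verify the identities after passing to set-theoretic elements. This is justified by the soundness and completeness of regular logic (equivalently, by Barr's embedding theorem, which provides a full, faithful, conservative regular functor from the small full subcategory of $\cA$ generated by the objects at hand into a presheaf topos, and, after taking enough points, into a power of $\mathbf{Set}$; such a functor preserves finite products and images and reflects inclusions and equalities of subobjects). After this reduction the hypothesis simply reads: $(x_1,x_2)\in A$ and $(y_1,y_2)\in B$ if and only if $(x_1,y_1)\in A'$ and $(x_2,y_2)\in B'$.

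I would then establish $A'=A_1\times B_1$ by two inclusions, the identity $B'=A_2\times B_2$ being entirely symmetric. For $A_1\times B_1\subseteq A'$: given $x_1\in A_1$ and $y_1\in B_1$, choose witnesses $x_2,y_2$ with $(x_1,x_2)\in A$ and $(y_1,y_2)\in B$; the hypothesis then yields $(x_1,y_1)\in A'$. For $A'\subseteq A_1\times B_1$: given $(x_1,y_1)\in A'$, pick any $(x_2,y_2)\in B'$, whence $(x_1,x_2)\in A$ and $(y_1,y_2)\in B$, so $x_1\in A_1$ and $y_1\in B_1$. Translating back, the images of the projections of $A$ and $B$ give the desired factorizations.

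The step I expect to be the main obstacle is the second inclusion: it consumes a witness $(x_2,y_2)\in B'$ and therefore requires the relations to be \emph{inhabited}. This is exactly the delicate point of the lemma, since witnessing existence is not an operation available in pure regular logic; one must instead input externally that $A$ and $B$ (hence $A'$ and $B'$) are non-initial, which follows once $A\times B=A'\times B'$ is inhabited and which holds for the relations arising in the intended application. The fully degenerate situation, where $A\times B$ is the initial subobject, is where genuine care is needed: here one invokes positivity/extensivity of $\cA$ (or the concrete structure of $\cA$ in the examples) to see that the relations occurring as basis elements do not collapse in this way, so that the witness argument above applies.
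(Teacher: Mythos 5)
Your construction of $A_1,A_2,B_1,B_2$ as the images of the coordinate projections of $A$ and $B$ is exactly the paper's construction; the two arguments differ only in how the identity $A'=A_1\times B_1$ is then verified. The paper works diagrammatically: it first shows that extremal epimorphisms in a regular category are closed under finite products (by pullback stability), concludes that $A\times B\twoheadrightarrow A_1\times B_1\hookrightarrow X_1\times Y_1$ is an image factorization of the projection restricted to $A\times B$, and then identifies this image with $A'$ using $A\times B=A'\times B'$. Your route through Barr's embedding theorem (equivalently, the soundness of regular logic) is a legitimate substitute: equality and containment of subobjects built from finite limits and images are preserved and reflected by a conservative regular functor into a power of $\mathbf{Set}$ (after restricting to a small full subcategory), so the elementwise verification you give does establish the same identities. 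It buys a shorter computation at the price of a heavier black box.

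The obstacle you isolate at the end is genuine, and it is worth recording that the paper's own proof has the same unaddressed step, hidden in the sentence ``but this image is $A'$'': identifying $\im(A'\times B'\to X_1\times Y_1)$ with $A'$ requires the projection $A'\times B'\to A'$, i.e.\ $\id_{A'}\times(B'\to *)$, to be an extremal epimorphism, which is precisely the inhabitedness of $B'$ that your witness-picking step consumes. Without such a hypothesis the statement is in fact false: in $\mathbf{Set}$ take $X_1=Y_1$ a two-element set, $A=\emptyset$, $B=Y_1\times Y_2$, $B'=\emptyset$, and $A'$ the diagonal in $X_1\times Y_1$; then $A\times B=\emptyset=A'\times B'$, but the diagonal is not a product of subobjects. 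So the correct fix is not positivity or extensivity of $\cA$ but the requirement that $A'\to *$ and $B'\to *$ (equivalently $A\to *$ and $B\to *$) be extremal epimorphisms, or an explicit treatment of the degenerate case; this holds automatically in the paper's applications ($G$-orbits are nonempty, and in the opposite of finite sets and in $\F_q$-vector spaces every object maps extremal-epimorphically onto the terminal object). Your explicit identification of this point is a strength of the write-up rather than a defect relative to the paper.
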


\begin{proof}  We claim that in a regular category, the classes of monomorphisms and extremal epimorphisms are each closed under forming products of morphisms. Indeed, this is true for monomorphisms in any category. For the statement on extremal epimorphisms, it suffices to show that for each extremal epimorphism $\pi\:X\to Y$ and any object $Z$, the product $\id_Z\times\pi$ is an extremal epimorphism, as the class of extremal morphisms is closed under composition. Consider the diagram
$$
\begin{tikzcd}[column sep=2cm]
Z\times X \ar[r,"\id_Z\times \pi"] \ar[d,twoheadrightarrow] &
Z\times Y \ar[r,twoheadrightarrow] \ar[d,twoheadrightarrow] &
Z \ar[d,twoheadrightarrow] \\
X \ar[r,twoheadrightarrow,"\pi"] 
& Y \ar[r,twoheadrightarrow] &
*
\end{tikzcd} \qquad,
$$
where $*$ is the terminal object and all unlabeled two-headed arrows are terminal morphisms or structural morphisms of products. Both inner squares in the diagram commute. The right inner and the outer square can be identified as pullback squares. Hence by the pasting law, the left inner square is a pullback. This shows $\id_Z\times\pi$ is an extremal epimorphism, as those are closed under pullbacks (\cite{Knop}*{2.1~Definition, R3}). We have established the claim.

We return to the main assertion. Taking image factorizations, we have the following commutative diagrams consisting of monomorphisms and extremal epimorphisms, as indicated by the types of arrows:
$$
\begin{tikzcd}
A \ar[r,hookrightarrow,"\iota_A"] \ar[d,twoheadrightarrow,"\pi'_A"]
& X_1\times X_2 \ar[d,twoheadrightarrow,"\pi_A"]
\\
A_1 \ar[r,hookrightarrow,"\iota'_A"]
& X_1
\end{tikzcd}
\quad,\qquad
\begin{tikzcd}
B \ar[r,hookrightarrow,"\iota_B"] \ar[d,twoheadrightarrow,"\pi'_B"]
& Y_1\times Y_2 \ar[d,twoheadrightarrow,"\pi_B"]
\\
B_1 \ar[r,hookrightarrow,"\iota'_B"]
& Y_1
\end{tikzcd} 
\qquad,
$$
where we call the image objects $A_1$ and $B_1$, respectively.
This means we have a commutative diagram as follows:
$$
\begin{tikzcd}[column sep=2cm]
A\times B
    \ar[r,hookrightarrow,"\id\times\iota_B"]
    \ar[d,twoheadrightarrow,"\pi'_A\times\pi'_B"]
& A\times Y_1\times Y_2
    \ar[r,hookrightarrow,"\iota_A\times\id"]
    \ar[d,twoheadrightarrow,"\pi'_A\times\pi_B"]
& X_1\times X_2\times Y_1\times Y_2
    \ar[d,twoheadrightarrow,"\pi_A\times\pi_B"]
\\
A_1\times B_1
    \ar[r,hookrightarrow,"\id\times\iota'_B"]
& A_1\times Y_1
    \ar[r,hookrightarrow,"\iota'_A\times\id"]
& X_1\times Y_1
\end{tikzcd} ,
$$
where the types of morphisms can be determined using the above claim.

The diagram exhibits $A_1\times B_1$ as the image of $A\times B$ under $\pi_A\times\pi_B$, but this image is $A'$. The proof of the decomposition of $B'$ is similar.  
\end{proof}

Let $\cS(\cA)$ denote the class of extremal epimorphisms in $\cA$. A \emph{degree function} for $\cA$ is a map $\delta:\cS(\cA)\to\kk$ satisfying certain conditions (\cite{Knop}*{3.1.~Definition}).

For any regular category $\cA$ with a degree function $\delta$, the category $\cT^0(\cA,\delta)$ is defined as a $\kk$-linear monoidal category as follows:
\begin{itemize}
\item Its \emph{objects} are the objects of $\cA$.
\item Its \emph{hom-spaces} $\Hom(X,Y)$ are the $\kk$-vector spaces spanned by the relations $R_{X,Y}$ between $X$ and $Y$, for all $X,Y\in\cA$,
\item Its \emph{composition} is given by the pullbacks of spans, using the degree function. More precisely, for $R_1\subset X\x Y$ and $R_2\subset Y\x Z$, we obtain a morphism $f:R_1\x_Y R_2\to X\x Z$. Let $f=ie$ be its image factorization. Then the composition of $R_2\circ R_1$ in $\cT^0(\cA)$ is defined as the relation given by $i$, i.e., $\im(f)$, multiplied by the scalar $\delta(e)$.
\item Its \emph{tensor unit} $\one$ is given by the terminal object $*$ of $\cA$.
\item Its \emph{tensor product} is given by the product of subobjects.
\end{itemize}

The category $\cT(A,\delta)$ is defined as the pseudo-abelian envelope of $\cT^0(\cA,\delta)$, we call it \emph{Knop's tensor envelope} of $\cA$. It turns out to be rigid and symmetric.

Assume from now on that $\cA$ is essentially small and its terminal object $*$ has no non-trivial subobjects. Then $\cT^0(\cA,\delta)$ is a pseudo-tensor category as defined in \Cref{bg:pseudo}. Note that we have inclusions of categories $\cA\subset \cT^0(\cA,\delta)\subset\cT(\cA,\delta)$, where $f:X\to Y$ in $\cA$ is interpreted as its graph, a subobject of the form $X\subset X\times Y$.

\begin{proposition} Let $\cA$ be regular category, let $\delta$ be a degree function for $\cA$. Let $\cC$ be any pseudo-tensor subcategory of $\cT(\cA,\delta)$ containing all objects of $\cA$ such that the hom-spaces $\cC(X,Y)$ are spanned by $\cC(X,Y)\cap R_{X,Y}$, for all $X,Y\in\cA$. Then $\cC$ is $\oplus$-pseudo-diagrammatic.
\end{proposition}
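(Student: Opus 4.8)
The plan is to produce a full monoidal subcategory $\cC_0\subseteq\cC$ that is pseudo-diagrammatic in the sense of \Cref{def:pseudo-diagrammatic}, and such that every object of $\cC$ is a direct summand of a finite direct sum of objects of $\cC_0$; by \Cref{def:pseudo-diagrammatic-plus} this exhibits $\cC$ as $\oplus$-pseudo-diagrammatic. I would take $\cC_0$ to be the full $\kk$-linear subcategory of $\cC$ on the objects of $\cA$. It is monoidal because the tensor product of $\cT(\cA,\delta)$ is the product of subobjects, so $X\o Y=X\x Y$ and $\one=*$ again lie in $\cA$. Since $\cT(\cA,\delta)$ is the pseudo-abelian envelope of $\cT^0(\cA,\delta)$, whose objects are exactly those of $\cA$, every object of $\cC$ is a summand of a finite direct sum of objects of $\cC_0$; this is the second requirement. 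For the bases of \hyperref[cond:diag]{(Diag)} I would set $B_{X,Y}:=\cC(X,Y)\cap R_{X,Y}$: these span $\cC(X,Y)$ by hypothesis and, being subsets of the relation-basis of $\cT^0(\cA,\delta)(X,Y)$, are linearly independent, hence bases.

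For \hyperref[cond:diag]{(Diag)}(a) the tensor product sends relations $b\subset X\x Y$ and $b'\subset X'\x Y'$ to the product subobject $b\x b'$, viewed as a relation in $(X\x X')\x(Y\x Y')$ after permuting factors; this is again a single relation lying in $\cC$, hence in $B_{X\o X',Y\o Y'}$. To prove injectivity I would recover the factors by projection: the image of $b\x b'$ under the projection to $X\x Y$ should be $b$, and likewise for the other projection. The key point is that projecting off the $b'$-factor is the map $b\x b'\to b$ induced by $b'\to *$, which is an extremal epimorphism precisely because $*$ has no non-trivial subobjects (so the image of any object in $*$ is $*$); since products of extremal epimorphisms are extremal epimorphisms---as shown inside the proof of \Cref{lem:product-of-subobjects}---this projection is extremal epi, and its image factorization identifies $b$. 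Equal product subobjects then have equal projections, giving $b=\tilde b$ and $b'=\tilde b'$.

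The heart of the argument is \hyperref[cond:diag]{(Diag)}(b). Composing a relation $S\subset X$ (read as $X\to\one$) with a relation $T\subset Y$ (read as $\one\to Y$) produces, by the definition of composition through the terminal object, exactly the product relation $S\x T\subset X\x Y$; hence a basis relation factors through $\one$ if and only if it is a product relation. Thus the hypothesis that $b\o b'$ factors through $\one$ says $b\x b'=P\x Q$ for subobjects $P\subset X\x X'$ and $Q\subset Y\x Y'$, which is precisely the configuration of \Cref{lem:product-of-subobjects}: taking $A=P$, $B=Q$, $A'=b$, $B'=b'$, the lemma yields decompositions $b=b_1\x b_2$ and $b'=b_1'\x b_2'$ into subobjects of the single factors, so $b$ and $b'$ are product relations and therefore factor through $\one$.

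I expect the main obstacle to be \hyperref[cond:diag]{(Diag)}(b): it is \Cref{lem:product-of-subobjects} that carries the argument, and the work lies in identifying ``factoring through $\one$'' with the intrinsic property of being a product relation (so that \hyperref[cond:diag]{(Diag)} can be checked without reference to the composition of $\cC$, cf.\ the remark after \Cref{def:pseudo-diagrammatic}) and in matching indices so that the lemma applies. By comparison, the inhabitedness input to \hyperref[cond:diag]{(Diag)}(a) and the reduction to $\cC_0$ are routine once the role of the hypothesis on $*$ is isolated.
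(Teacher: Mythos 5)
Your proof is correct and follows essentially the same route as the paper: restrict to the full subcategory on the objects of $\cA$, take the relations as bases, recover the factors of a product relation via projections and image factorizations for \hyperref[cond:diag]{(Diag)}(a), and reduce \hyperref[cond:diag]{(Diag)}(b) to \Cref{lem:product-of-subobjects} after identifying ``factors through $\one$'' with ``is a product of a relation into $X_1\times X_2$ and one into $Y_1\times Y_2$''. Your explicit use of the closure of extremal epimorphisms under products (and of the hypothesis that $*$ has no non-trivial subobjects) to justify the recovery of the factors is slightly more detailed than the paper's phrasing but is the same argument.
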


\begin{proof} We verify the conditions \hyperref[cond:diag]{(Diag)} for $\cC^0$, the full $\kk$-linear monoidal subcategory of $\cC$ on the objects of $\cA$.

Recall that the tensor product in $\cC^0$ is given by the product of subobjects.
Consider objects $X_1,X_2,Y_1,Y_2$ in $\cA$ and subobjects $R_i\subset X_i\times Y_i$, for $i=1,2$. Set $R:=R_1\x R_2$, a subobject of $X_1\x X_2\x Y_1\x Y_2$. Then $R_i$ as a subobject of $X_i\x Y_i$ is the image of the morphism $r:R\to X_1\x X_2\x Y_1\x Y_2$ post-composed with a suitable combination of projections. In particular, it is completely determined by the morphism $r$. This proves \hyperref[cond:diag]{(Diag)}(a).

Recall that the tensor unit in $\cC^0$ is given by the terminal object $*\in\cA$. 
Assume a subobject $R\subset X_1\x X_2\x Y_1\x Y_2$ corresponds to a tensor product of morphisms given by relations $R_i\subset X_i\x Y_i$ as above, and assume at the same time the morphism given by $R$ factors via the tensor unit. The latter means there are relations
$$
S_1\subset X_1\x X_2\x *,\quad
S_2\subset *\x Y_1\x Y_2 ,
$$
such that $R=S_1\times_{*}S_2$ is given by a pullback along the projections of $S_1,S_2$ to the terminal object, i.e., the middle square in 
$$\begin{tikzcd}[row sep=4pt]
&&R\ar[dr]\ar[dl]&&\\
&S_1\ar[dr]\ar[dl]&&S_2\ar[dr]\ar[dl]&\\
X_1\times X_2&& {*} &&Y_1\times Y_2
\end{tikzcd}$$
is a pullback. Note that since monomorphisms are closed under taking direct products, the resulting span above is given by an injective map $R\to X_1\times X_2\times Y_1\times Y_2$.
This just means that $R$ is the product of the subobjects $S_1,S_2$. 
But this means that each $R_i$, which is obtained from $R$ via suitable projections, is a product of suitable projections of the $S_i$, by \Cref{lem:product-of-subobjects}. This means the morphisms represented by $R_i$ factor via the tensor unit, proving \hyperref[cond:diag]{(Diag)}(b).
\end{proof}

In \cite{Knop}, Knop obtains a realization of $\cS_t$ as in \Cref{bg:RepSt} by specializing $\cA$ to the opposite of the category of finite sets. He also obtains interpolation categories $\RepGLnFq$ for the general linear groups over finite fields $\F_q$ from specializing $\cA$ to be the category of finite-dimensional $\F_q$-vector spaces (but note that $\RepGLnFq$ is a $\kk$-linear category).

\begin{corollary} \label{prop:U-Uex-GlnFq}
In particular, the pseudo-tensor category $\cD=\RepGLnFq$ is $\oplus$-pseudo-diagrammatic, and thus, $\U(\cD)=\Uex(\cD)$.
\end{corollary}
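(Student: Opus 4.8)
The plan is to realize $\RepGLnFq$ as an instance of Knop's tensor envelope $\cT(\cA,\delta)$ and then invoke the preceding proposition, which already establishes that any pseudo-tensor subcategory of such an envelope (containing all objects of $\cA$ and with hom-spaces spanned by relations) is $\oplus$-pseudo-diagrammatic. Concretely, following \cite{Knop} and \cite{EAH-glt}, one takes $\cA$ to be the category of finite-dimensional $\F_q$-vector spaces equipped with the appropriate degree function $\delta$, so that $\cD=\RepGLnFq$ is precisely $\cT(\cA,\delta)$. Applying the preceding proposition with $\cC=\cT(\cA,\delta)$ itself --- the degenerate choice of subcategory, which trivially contains all objects of $\cA$ and whose hom-spaces are by construction spanned by the relations $R_{X,Y}$ --- then immediately yields that $\cD$ is $\oplus$-pseudo-diagrammatic.

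Before this applies, I would check that $\cA$ meets the standing hypotheses imposed just before that proposition: namely that $\cA$ is regular, essentially small, and that its terminal object has no non-trivial subobjects. The category of finite-dimensional $\F_q$-vector spaces is abelian, hence regular, and it is essentially small, since its objects are classified up to isomorphism by dimension (and over a finite field such spaces are even finite). Its terminal object is the zero object, whose only subobject is itself, so it has no non-trivial subobjects. This last point is exactly why, in contrast, Knop passes to the \emph{opposite} category of finite sets in the symmetric-group case, where the singleton terminal object has the empty set as a non-trivial subobject; here no such passage is needed.

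With $\cD$ shown to be $\oplus$-pseudo-diagrammatic, the conclusion $\U(\cD)=\Uex(\cD)$ follows from \Cref{prop:U-equals-Uex}, whose final assertion covers precisely the case of a rigid $\oplus$-pseudo-diagrammatic category, using that Knop's tensor envelope is rigid and symmetric as recalled above. I expect the only point requiring genuine care to be the bookkeeping in matching Knop's construction to the framework used here --- in particular confirming that the hom-spaces of $\RepGLnFq$ really are spanned by the relations $R_{X,Y}$ and that the standing subobject condition on the terminal object holds for $\F_q$-vector spaces. Everything substantive is already packaged in the preceding proposition, so once these identifications are in place the corollary reduces to a citation.
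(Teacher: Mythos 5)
Your proof is correct and is exactly the argument the paper intends: the corollary is stated without a separate proof precisely because it follows by applying the preceding proposition to $\cA$ the (regular, essentially small) category of finite-dimensional $\F_q$-vector spaces, whose terminal object is the zero object and hence has no non-trivial subobjects, and then invoking \Cref{prop:U-equals-Uex} for the rigid $\oplus$-pseudo-diagrammatic category $\cD$. Your checks of the standing hypotheses are the right ones (the aside about why Knop uses the opposite of finite sets for $\cS_t$ is tangential but does not affect the argument).
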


The categories $\RepGLnFq$ and their abelian envelopes are studied in \cite{EAH-glt} and in \cite{HS}*{Section~15}.

\subsection{Harman--Snowden's categories from pro-oligomorphic groups} We recall some definitions from \cite{HS}.
Let $G$ be a pro-oligomorphic group, a certain kind of topological group. A $G$-set is called \emph{finitary} if it has finitely many $G$-orbits. A $G$-set is called \emph{smooth} if the stabilizer of any point is an open subgroup of $G$. A \emph{measure} for $G$ is essentially a function $\mu$ assigning scalars in $\kk$ to finitary $G$-sets, subject to certain conditions (see \cite{HS}*{Section~3}).

Given a pro-oligomorphic group $G$ with a measure $\mu$, the $\kk$-linear monoidal category $\uPerm(G,\mu)$ is defined as follows:
\begin{itemize}
\item Its \emph{objects} are the smooth, finitary, possibly empty $G$-sets.
\item Its \emph{hom-spaces} $\Hom(X,Y)$ are the subspaces of $G$-invariant functions $X\times Y\to\kk$ which are spanned by the indicator functions on $G$-orbits of $X\x Y$.
\item Its \emph{composition} is given by a convolution product. Concretely, for $G$-orbits $A\subset X\x Y$ and $B\subset Y\x Z$, the composition of the corresponding indicator functions is defined by
$$
(1_B \circ 1_A)(x,z) = \mu(\{ y\in Y: (x,y)\in A, (y,z)\in B \})
\quad \tforall x\in X,z\in Z.
$$
\item Its \emph{tensor unit} is the singleton $G$-set $*$.
\item Its \emph{tensor product} is described on indicator functions for $G$-orbits $A\subset X_1\x Y_1$, $B\subset X_2\x Y_2$ as
$$
1_A\o 1_B = 1_{A\times B} .
$$
\end{itemize}

It turns out that the category $\uPerm(G,\mu)$ is additive with $X\oplus Y=X\sqcup Y$, the disjoint union of $G$-sets, and rigid.

Fix a pro-oligomorphic group $G$ with a measure $\mu$. Set $\cC:=\uPerm(G,\mu)$. Define $B_{X,Y}$ as the set of indicator functions on the $G$-orbits of $X\x Y$ in $\cC(X,Y)$, for all $X,Y$.

\begin{proposition} Let $\cD$ be a wide $\kk$-linear monoidal subcategory of $\cC=\uPerm(G,\mu)$ such that the hom-spaces $\cD(X,Y)$ are spanned by $\cD(X,Y)\cap B_{X,Y}$, for all $X,Y$. Then $\cD$ is $\oplus$-pseudo-diagrammatic.
\end{proposition}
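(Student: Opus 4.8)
The plan is to mirror the proof of the preceding proposition for Knop's tensor envelopes. Since pseudo-diagrammaticity implies $\oplus$-pseudo-diagrammaticity (one may take $\cD_0=\cD$, as $\cD$ is itself a full $\kk$-linear monoidal subcategory and every object lies in it), it suffices to produce a basis of each hom-space $\cD(X,Y)$ satisfying \hyperref[cond:diag]{(Diag)}. The structural fact I would rely on is that the category of smooth finitary $G$-sets is regular, with monomorphisms the injective $G$-maps, subobjects of $X\times Y$ the $G$-invariant subsets, tensor product of objects the product $G$-set, and tensor product of morphisms the product of the underlying $G$-invariant subsets; this places us exactly in the setting of \Cref{lem:product-of-subobjects}.

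The first thing to notice — and the point where this case genuinely differs from the cobordism and Knop cases — is that the orbit-indicator basis $B_{X,Y}$ does \emph{not} itself satisfy \hyperref[cond:diag]{(Diag)}(a): because $G$ acts diagonally, the product $A\times B$ of two orbits is $G$-invariant but typically splits into several orbits, so $1_A\o 1_B=1_{A\times B}$ is a sum of basis elements rather than a single one. The remedy is to pass to the \emph{orbit-closure} basis $e_O:=1_{\overline O}$, where $\overline O$ is the closure of the orbit $O$ under the specialization (degeneration) order — equivalently the diagram/``relation'' basis matching Knop's, related to $B_{X,Y}$ by a unitriangular M\"obius transform. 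The decisive property is that tensoring imposes no constraints between the two tensor factors, so $\overline A\times\overline B$ is again a closure, namely that of the generic product orbit in $A\times B$; hence $e_A\o e_B=1_{\overline A\times\overline B}$ is a single basis element, and the assignment is injective because $\overline A$ and $\overline B$ are recovered as the images of $\overline A\times\overline B$ under the two coordinate projections, exactly as in the Knop case. This establishes \hyperref[cond:diag]{(Diag)}(a).

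For \hyperref[cond:diag]{(Diag)}(b) I would argue verbatim as before: a closure element $e_A\o e_B=1_{\overline A\times\overline B}$ factors through $\one=*$ precisely when the $G$-invariant subset $\overline A\times\overline B$ is a product $S_1\times S_2$ with $S_1\subseteq X_1\times X_2$ and $S_2\subseteq Y_1\times Y_2$; then \Cref{lem:product-of-subobjects}, applied to the four factors, forces $\overline A$ and $\overline B$ to be products of subobjects, i.e.\ $e_A$ and $e_B$ each factor through $\one$.

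The hard part will be the compatibility between the basis actually used in the proof (the closure basis) and the hypothesis, which is phrased via the orbit basis $B_{X,Y}$. Concretely, $e_O$ lies in $\cD(X,Y)$ only if every specialization of $O$ already occurs in $\cD$, so I would need to show that the set of orbits appearing in $\cD$ is closed under specialization — equivalently, that spanning by orbit indicators and spanning by closure indicators agree on each hom-space. I expect this to follow from $\cD$ being closed under composition together with the presence of identities (note that already $\id_X=1_{\Delta_X}$ is a sum of orbit indicators over the orbits of the diagonal, so membership of identities forces degenerate orbits into $\cD$), and, where needed, from rigidity of $\cD$; making this closure argument precise in the general oligomorphic setting is the step I would expect to require the most care.
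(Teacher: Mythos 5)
Your reduction to the non-additive case and your plan for \hyperref[cond:diag]{(Diag)}(b) via \Cref{lem:product-of-subobjects} both track the paper, and your opening observation is sharp: the paper's own proof works directly with the orbit-indicator basis $B_{X,Y}$ and treats $1_A\o 1_B=1_{A\x B}$ as a single basis element, which requires the $G$-stable set $A\x B$ to be a single orbit of the diagonal $G$-action; in general it is not (already $1_{\Delta_\Omega}\o 1_{\Delta_\Omega}$ for $G=\Sym(\Omega)$ is a sum of two orbit indicators, corresponding to the two partitions refining $\{13\}\{24\}$). So you have correctly located the delicate step, and this is precisely where your argument departs from the paper's, which does not introduce a second basis.

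However, your proposed repair has a genuine gap. For a general pro-oligomorphic group there is no canonical ``specialization order'' on orbits, and even where a sensible one exists, the product of two orbit closures need not be the closure of a single orbit, so the closure basis fails \hyperref[cond:diag]{(Diag)}(a) as well. Concretely, take $G=\Aut(\QQ,<)$ acting on $\Omega=\QQ$, and let $A=B=\{(x,y):x<y\}\subset\Omega\x\Omega$, with closure $\{x\le y\}$. Then $\ol{A}\x\ol{B}=\{x\le y\}\x\{u\le v\}\subset\Omega^2\x\Omega^2$ contains six pairwise incomparable maximal orbits (the six interleavings of $x<y$ with $u<v$ on four distinct points), so it is not the closure of any single ``generic product orbit'', and $e_A\o e_B$ is not a basis element. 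Your picture is calibrated on $G=\Sym(\Omega)$, where closures are cut out by conjunctions of equalities and hence do multiply correctly (this is exactly the partition-diagram basis of $\cS_t$ versus the $x(f)$-basis), but that case is special. The compatibility issue you flag at the end is real but secondary: the argument already breaks at \hyperref[cond:diag]{(Diag)}(a). A viable repair must either produce a basis that is genuinely closed under $\o$ or bypass \hyperref[cond:diag]{(Diag)} and verify \hyperref[cond:ex1]{(Ex1)} and \hyperref[cond:ex2]{(Ex2)} directly; note that \hyperref[cond:ex1]{(Ex1)} at least is immediate, since the rectangles $A\x B$ with $A$ an orbit of $X\x\one$ and $B$ an orbit of $\one\x Y$ are pairwise disjoint, so their indicator functions are linearly independent.
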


\begin{proof}
First note that the tensor product of two indicator functions is an indicator function by definition. More precisely, if $A\subset X_1\x Y_1$ and $B\subset X_2\x Y_2$ are $G$-orbits, then $1_A \o 1_B = 1_{A\x B}$. As $A$ and $B$ can be recovered from $A\times B$, for any fixed $X_1,X_2,Y_1,Y_2$, this formula shows \hyperref[cond:diag]{(Diag)}(a).

Consider finitary $G$-sets $X_1,X_2,Y_1,Y_2$. Then the image of the composition map, or equivalently, the tensor product map
$$
\cC(X_1\x X_2,\one) \x \cC(\one,Y_1\x Y_2) \to \cC(X_1\x X_2,Y_1\x Y_2), \quad
(1_A,1_B)\mapsto 1_{A\x B}
$$
is by definition the span of the indicator functions $1_{A\times B}$ for $A\subset X_1\x X_2$ and $B\subset Y_1\x Y_2$. Assume that for $G$-orbits $A'\subset X_1\x Y_1$, $B'\subset X_2\x Y_2$, the tensor product $1_{A'}\o 1_{B'}=1_{A'\x B'}$ is of the form $1_{A\x B}$, for $G$-orbits $A\subset X_1\times X_2$ and $B\subset Y_1\times Y_2$. Then it follows from \Cref{lem:product-of-subobjects} applied to the category of finite sets (or from elementary arithmetic of sets) that $A'=A_1\x A_2$ and $B'=B_1\x B_2$ for subobjects $A_1\subset X_1$, $A_2\subset Y_1$, $B_1\subset X_2$, $B_2\subset Y_2$. This shows \hyperref[cond:diag]{(Diag)}(b). 
\end{proof}

\section{Monoidal adjunctions}
\label{sec:mon-adj}

In \Cref{sec:ab-env-diag}, we have seen that in many cases of interest (see \Cref{cor:abenv-for-diag}), the existence of splitting objects implies the existence of abelian envelopes.
In this section, we will show that splitting objects can be obtained from monoidal adjunctions, and we will explain how such monoidal adjunctions can be established.

\subsection{Existence of adjoints} Recall that a functor $F\:\cC\to\cD$ has a right adjoint if and only if, for any object $Y\in\cD$, the functor $\cD(F(-),Y)$ (sometimes also called \emph{presheaf}) is representable by an object in $\cC$. Our goal is to show that, if $F$ is a monoidal functor satisfying certain conditions, it suffices to consider the object $Y=\one\in\cD$.

We begin with an auxiliary result. Recall our conventions regarding rigid categories from \Cref{bg:pseudo}.
\begin{lemma} \label{lemma:representable_of_image_object}
Let $F\:\cC\to\cD$ be a monoidal functor between rigid monoidal categories. Assume $\cD( F(-),\one)$ is representable by an object $Z\in\cC$. Then we have isomorphisms
$$
\cD(F(X),F(Y)) \cong \cC(X,Z\o Y) \cong \cC(Z^*\o X,Y)
$$
and
$$
\cD(F(X),F(Y)) \cong \cC(X, Y\o Z) \cong \cC(X\o {}^*Z,Y)
$$
that are natural in $X,Y \in \cC$.
It also follows that $Z^* \cong {}^* Z$.
\end{lemma}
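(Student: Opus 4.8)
The plan is to realize each displayed isomorphism as a composite of three natural isomorphisms: a rigidity adjunction in $\cD$, the representability hypothesis, and a rigidity adjunction in $\cC$. A standing ingredient is that a strong monoidal functor between rigid categories preserves duals, so that $F(Y)^*\cong F(Y^*)$ and ${}^*F(Y)\cong F({}^*Y)$, compatibly with (co)evaluations; I would record this first, since it is exactly what lets $F$ pass through the dualizations produced by rigidity. Recall from \Cref{bg:pseudo} that rigidity furnishes the adjunctions $(-\o W)\dashv(-\o W^*)$ and $(W\o-)\dashv({}^*W\o-)$, from which one also reads off $(-\o{}^*W)\dashv(-\o W)$ and $(W^*\o-)\dashv(W\o-)$ using ${}^*(W^*)\cong W$.

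For the first chain I would start from $\cD(F(X),F(Y))$ and use the adjunction $(-\o{}^*F(Y))\dashv(-\o F(Y))$ in $\cD$ (coming from the coevaluation $\one\to{}^*F(Y)\o F(Y)$ and evaluation $F(Y)\o{}^*F(Y)\to\one$) to rewrite it as $\cD(F(X)\o{}^*F(Y),\one)$. Preservation of duals and monoidality of $F$ identify $F(X)\o{}^*F(Y)\cong F(X\o{}^*Y)$, so this becomes $\cD(F(X\o{}^*Y),\one)$, which the representability hypothesis turns into $\cC(X\o{}^*Y,Z)$. A final application of $(-\o{}^*Y)\dashv(-\o Y)$ in $\cC$ gives $\cC(X,Z\o Y)$, and the adjunction $(Z^*\o-)\dashv(Z\o-)$—valid because ${}^*(Z^*)\cong Z$—rewrites this as $\cC(Z^*\o X,Y)$.

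The second chain is the mirror image, stripping $F(Y)$ off the opposite side: using $(F(Y)^*\o-)\dashv(F(Y)\o-)$ in $\cD$ gives $\cD(F(Y)^*\o F(X),\one)\cong\cD(F(Y^*\o X),\one)$, representability yields $\cC(Y^*\o X,Z)$, and the rigidity adjunctions $(Y^*\o-)\dashv(Y\o-)$ and $(-\o{}^*Z)\dashv(-\o Z)$ in $\cC$ produce $\cC(X,Y\o Z)$ and $\cC(X\o{}^*Z,Y)$ in turn. Finally, specializing the two ``second descriptions'' at $X=\one$ (using $F(\one)\cong\one$) gives natural isomorphisms $\cC(Z^*,Y)\cong\cD(\one,F(Y))\cong\cC({}^*Z,Y)$ in $Y$, whence $Z^*\cong{}^*Z$ by the Yoneda lemma.

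The main obstacle is bookkeeping rather than conceptual: I must keep the left/right dual conventions straight and, more importantly, check that each individual isomorphism is natural in both $X$ and $Y$, so that the composites are natural and the closing Yoneda argument is legitimate. The only genuinely external input is preservation of duals under a monoidal functor, which I would either cite or verify directly by exhibiting the induced (co)evaluations on $F(X^*)$ and $F({}^*X)$.
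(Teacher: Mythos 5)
Your proof is correct and follows essentially the same route as the paper: both arguments chain together a rigidity adjunction in $\cD$, the compatibility of monoidal functors with duals, the representability hypothesis, and rigidity adjunctions in $\cC$, and then deduce $Z^*\cong{}^*Z$ by specializing $X=\one$ and invoking Yoneda. Your version merely makes the individual adjunctions and the duality-preservation step more explicit than the paper does.
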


\begin{proof} Using that monoidal functors are compatible with taking duals, we compute
\begin{align*}
\cD(F(X),F(Y)) &\cong \cD(F(X), \one\o F(Y)) 
 \cong \cD(F(X)\o{}^*F(Y), \one) \\
& \cong \cD(F(X)\o F({}^*Y), \one) 
 \cong \cD(F(X\o {}^*Y), \one) \\
& \cong \cC(X\o{}^*Y, Z) 
 \cong \cC(X,Z\o Y) \cong \cC(Z^*\o X,Y) 
\end{align*}
and similarly
\begin{align*}
\cD(F(X),F(Y)) &\cong \cD(F(X), F(Y)\o\one) 
\cong \cD(F(Y^*\o X), \one) \\
& \cong \cC(Y^*\o X, Z) 
 \cong \cC(X, Y\o Z) \cong \cC(X\o {}^*Z,Y) .
\end{align*}
Specializing $X$ to $\one$, we obtain $\cC(Z^*,-)\cong\cC({}^*Z,-)$, which implies $Z^*\cong{}^*Z$ using the Yoneda embedding.
\end{proof}

\begin{definition}
We call a linear functor $F\: \cC \rightarrow \cD$ between $\kk$-linear categories \emph{dominant} if for all $Y \in \cD$ there exists an $X \in \cC$ such that $Y$ is a direct summand of $F(X)$.
\end{definition}

\begin{example} \label{expl:tensor-generator}
Let $F\: \cC \rightarrow \cD$ be a linear monoidal functor between $\kk$-linear additive monoidal categories. Recall from \Cref{bg:pseudo} that a \emph{tensor generator} of $\cD$ is an object $T \in \cD$ such that every indecomposable object $Y \in \cD$ occurs as a direct summand of $T^{\otimes n}$ for some $n \in \ZZ_{\geq 0}$.
If $F$ maps some object of $\cC$ to an object containing a tensor generator of $\cD$ as a direct summand, then $F$ is dominant.
\end{example}

\begin{example} \label{expl:composition-dominant} The composition of dominant functors is dominant.
\end{example}

\begin{corollary}
\label{corollary:adjoints_rigid_case}
Let $F\: \cC \rightarrow \cD$ be a linear monoidal functor between $\kk$-linear additive rigid monoidal categories. Suppose that 
\begin{enumerate}
    \item $\cD(F(-), \one )$ is representable by an object $Z\in\cC$,
    \item $F$ is dominant, and
    \item $\cC$ is idempotent complete.
\end{enumerate}
Then $F$ has both a left adjoint $L$ and a right adjoint $R$ such that $R(\one)\cong Z$ and $L(\one)\cong Z^*\cong {}^*Z$.
\end{corollary}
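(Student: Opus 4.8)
The plan is to use the standard criterion recalled at the start of this subsection: $F$ admits a right (respectively left) adjoint precisely when the functor $\cD(F(-),Y)$ (respectively $\cD(Y,F(-))$) is representable by an object of $\cC$ for every $Y\in\cD$. I would first establish representability when $Y$ lies in the essential image of $F$, which is exactly what \Cref{lemma:representable_of_image_object} supplies, and then bootstrap to arbitrary $Y$ using dominance (assumption~(2)) together with idempotent completeness (assumption~(3)). The values $R(\one)$ and $L(\one)$ will then fall out directly.

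For the right adjoint, I would fix $Y'\in\cC$ and use the first chain of isomorphisms in \Cref{lemma:representable_of_image_object}, namely $\cD(F(X),F(Y'))\cong\cC(X,Z\o Y')$ naturally in $X$, so that $\cD(F(-),F(Y'))$ is represented by $Z\o Y'$. For a general $Y\in\cD$, dominance furnishes $Y'\in\cC$ and an idempotent $e\in\cD(F(Y'),F(Y'))$ exhibiting $Y$ as a direct summand of $F(Y')$. Post-composition with $e$ is then an idempotent natural endomorphism of $\cD(F(-),F(Y'))$ whose image is $\cD(F(-),Y)$. Transporting this endomorphism across the natural isomorphism above and applying the contravariant Yoneda lemma produces an idempotent $\tilde e$ of $Z\o Y'$ in $\cC$; since $\cC$ is idempotent complete, $\tilde e$ splits, and its image object $W$ represents the image functor, giving $\cD(F(-),Y)\cong\cC(-,W)$. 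This yields the right adjoint with $R(Y)=W$, while assumption~(1) identifies $R(\one)\cong Z$ immediately.

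The left adjoint I would handle symmetrically, using instead the second chain of isomorphisms in \Cref{lemma:representable_of_image_object}: for fixed $Y'$ one has $\cD(F(Y'),F(Y))\cong\cC(Y'\o{}^*Z,Y)$ naturally in $Y$, so $\cD(F(Y'),F(-))$ is represented covariantly by $Y'\o{}^*Z$. The same summand-and-idempotent argument, with covariant Yoneda replacing the contravariant one, produces a representing object for $\cD(Y,F(-))$ for every $Y$, hence the left adjoint $L$. Finally, since $F$ is monoidal we have $\one\cong F(\one)$, so $\cD(\one,F(-))\cong\cC({}^*Z,-)$ and therefore $L(\one)\cong{}^*Z$; combined with the isomorphism $Z^*\cong{}^*Z$ already recorded in \Cref{lemma:representable_of_image_object}, this gives $L(\one)\cong Z^*\cong{}^*Z$.

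The step I expect to be the main obstacle is the bootstrapping: transporting the splitting idempotent from an object of $\cD$ to its representing object in $\cC$ and splitting it there. Concretely, one must verify that the image of an idempotent natural endomorphism of a representable functor is again representable as soon as the representing category is idempotent complete -- this is precisely where assumption~(3) is used -- and one must keep careful track of the variance of the Yoneda lemma (contravariant for $R$, covariant for $L$) together with the naturality statement of \Cref{lemma:representable_of_image_object}, so that the transported endomorphism genuinely corresponds to an endomorphism of $Z\o Y'$ (respectively $Y'\o{}^*Z$). Everything else is a routine unwinding of the representability criterion for adjoints, the resulting adjunctions being automatically $\kk$-linear.
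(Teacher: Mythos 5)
Your proposal is correct and follows essentially the same route as the paper's proof: reduce to objects in the image of $F$ via dominance, invoke \Cref{lemma:representable_of_image_object} for representability of $\cD(F(-),F(Y'))$ and $\cD(F(Y'),F(-))$, and use idempotent completeness of $\cC$ to see that a direct summand of a representable functor is representable. You merely spell out the idempotent-splitting step in more detail than the paper does, and your identifications $R(\one)\cong Z$ and $L(\one)\cong Z^*\cong{}^*Z$ match the paper's.
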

\begin{proof}
For the right adjoint, consider $Y \in \cD$. We want to show that $\cD(F(-), Y)$ is representable.
Since $F$ is dominant, there is an $X \in \cC$ such that $Y$ is a summand of $F(X)$.
Hence, the functor $\cD(F(-), Y )$ is a direct summand of the functor $\cD( F(-), F(X) )$, and it suffices to show that $\cD( F(-), F(X) )$ is representable, since any direct summand of a representable functor is itself representable (by the Yoneda lemma and $\cC$ being idempotent complete). But representability of $\cD( F(-), F(X) )$ follows from \Cref{lemma:representable_of_image_object}, which also implies $R(\one)\cong Z$.

Similarly, for the left adjoint we have to show that $\cD(F(X), F(-))$ is representable for any $X\in\cC$. It follows from \Cref{lemma:representable_of_image_object} that indeed $Z^*\o X$ is a representing object.
\end{proof}

Recall that for us, a $\kk$-linear category is semisimple if it is abelian and every object is a finite direct sum of simple objects. In the semisimple case, we get the following stronger result.

\begin{corollary} \label{cor:adjoint-semisimple}
    Let $F\: \cC \rightarrow \cD$ be a $\kk$-linear monoidal functor between semisimple rigid monoidal categories $\cC$ and $\cD$. The following are equivalent:
    \begin{enumerate}
        \item $F$ has both a left adjoint $L$ and a right adjoint $R$ such that $L \cong R$.
        \item The functor $\cD( F(-), \one )$ is representable.
    \end{enumerate}
\end{corollary}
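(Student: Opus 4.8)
The plan is to prove the easy implication $(1)\Rightarrow(2)$ directly and then, for $(2)\Rightarrow(1)$, to build both adjoints by hand and compare them, reducing everything to simple objects. For $(1)\Rightarrow(2)$: if $F$ has a right adjoint $R$, then $\cD(F(-),\one)\cong\cC(-,R(\one))$ is represented by $R(\one)$, and the hypothesis $L\cong R$ is not needed.

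For $(2)\Rightarrow(1)$, assume $\cD(F(-),\one)$ is represented by $Z\in\cC$. Since $\cC$ is semisimple it is idempotent complete, so \Cref{corollary:adjoints_rigid_case} would apply immediately if $F$ were dominant; but dominance can fail (for instance the inclusion of trivial representations $\Vect\to\Rep G$), so I would instead construct $R$ directly. As $\cD$ is semisimple, every object is a finite direct sum of simples, and representable functors are closed under finite direct sums and retracts (using idempotent completeness of $\cC$); hence it suffices to represent $\cD(F(-),W)$ for each simple $W$. I would split into two cases. If $W$ is a direct summand of some $F(X)$, then $\cD(F(-),W)$ is a retract of $\cD(F(-),F(X))\cong\cC(-,Z\o X)$ by \Cref{lemma:representable_of_image_object}, hence representable. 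If $W$ is simple but is not a summand of any $F(X)$, then any nonzero morphism $F(X)\to W$ would be a split epimorphism (by semisimplicity), exhibiting $W$ as a summand of $F(X)$, a contradiction; thus $\cD(F(X),W)=0$ for all $X$ and the functor is represented by $0$. This shows $R$ exists, with $R(\one)\cong Z$. The left adjoint $L$ is obtained by the mirror argument, using instead the isomorphism $\cD(F(X),F(Y))\cong\cC(Z^*\o X,Y)$ of \Cref{lemma:representable_of_image_object} and the vanishing of $\cD(W,F(X))$ for simple $W$ not occurring in any $F(X)$.

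It then remains to identify $L$ and $R$. The key observation is that in any $\kk$-linear semisimple category the dimension pairing is \emph{symmetric}: writing $\{S_i\}$ for the simples, one has $\cD(A,B)\cong\cD(B,A)$-dimensions matching because $\cD(S_i,S_j)=\delta_{ij}\End(S_i)$. Applying this in $\cD$ with $A=F(X_i)$ and $B=W$ gives $\dim_\kk\cD(F(X_i),W)=\dim_\kk\cD(W,F(X_i))$ for every simple $X_i\in\cC$. The adjunctions turn the two sides into $\dim_\kk\cC(X_i,R(W))$ and $\dim_\kk\cC(L(W),X_i)$ respectively, and since $\dim_\kk\cC(X_i,M)$ computes the multiplicity of $X_i$ in $M$ up to the factor $\dim_\kk\End(X_i)$, I read off that $X_i$ occurs with equal multiplicity in $R(W)$ and in $L(W)$ for every $i$. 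Because $\cC$ is semisimple, this yields an objectwise isomorphism $R(W)\cong L(W)$.

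I expect the main obstacle to be upgrading this objectwise identification to a \emph{natural} isomorphism $L\cong R$. To address it I would first use rigidity to extract, from the adjunction isomorphisms together with the compatibility $F(X)^*\cong F(X^*)$ and functoriality of duals, a natural isomorphism $L(W)\cong\bigl(R(W^*)\bigr)^*$, reducing the task to showing that $R$ is compatible with duality, i.e.\ $\bigl(R(W^*)\bigr)^*\cong R(W)$ naturally in $W$. Concretely this is the self-duality of the algebra $Z=R(\one)$ (its lax monoidal structure makes $Z$ an algebra, and semisimplicity of $\cD$ forces this algebra to be separable Frobenius, hence self-dual), now carried functorially across all of $\cD$. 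Making this last step natural rather than merely objectwise is the delicate point, and it is precisely here that the semisimplicity of $\cD$ is used beyond the bare dimension count.
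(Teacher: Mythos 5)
Your construction of the right adjoint is exactly the paper's: reduce to simple objects $W$ of $\cD$, observe that $\cD(F(-),W)$ is a retract of $\cD(F(-),F(X))\cong\cC(-,Z\o X)$ when $W$ is a summand of some $F(X)$, and that it vanishes (hence is represented by $0$) otherwise. Your remark that dominance can fail, so that \Cref{corollary:adjoints_rigid_case} cannot be invoked wholesale, is precisely the reason for the paper's case distinction. Up to this point the two arguments coincide, and your $(1)\Rightarrow(2)$ is the same one-line observation.

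Where you diverge is in producing $L$ and identifying it with $R$. You build $L$ by an independent mirror construction and then compare the two functors; the dimension symmetry $\dim_\kk\cD(A,B)=\dim_\kk\cD(B,A)$ does give $L(W)\cong R(W)$ objectwise, but you are then left having to promote an objectwise identification of two separately constructed functors to a natural isomorphism, and you concede that this step is not done. The route you sketch for it is also problematic: deducing $\bigl(R(W^*)\bigr)^*\cong R(W)$ from a separable Frobenius structure on $Z=R(\one)$ is close to circular, since the Frobenius property of $R(\one)$ (equivalently, $R$ being a Frobenius monoidal functor) is essentially the statement that $R$ is a two-sided adjoint, which is what you are trying to prove. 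The paper sidesteps the comparison entirely: it applies the same dimension symmetry on both sides of the already-established adjunction $\cD(F(Y),X)\cong\cC(Y,R(X))$ to conclude that the single functor $R$ already satisfies $\cD(-,F(Y))\cong\cC(R(-),Y)$, i.e.\ that $R$ is itself a left adjoint of $F$; then $L\cong R$ comes for free from uniqueness of adjoints, with no duality or Frobenius input. (The paper's own handling of naturality here is admittedly terse, but it reduces everything to a representability statement about the one functor $R$ rather than a comparison of two.) So: your construction of $R$ is correct and matches the paper; the final identification $L\cong R$ as you propose it is a genuine gap, and the repair is to show directly that $R$ is a left adjoint rather than to build $L$ separately and compare.
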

\begin{proof} (1) immediately implies (2). So assume (2), we want to show (1). 

Suppose $\cD( F(-), \one )$ is representable by an object $Z \in \cC$.
Consider a simple object $Y$ in $\cD$. We distinguish two cases.

\emph{First case:} There is no $X \in \cC$ such that $Y$ is a summand of $F(X)$.
Then $\cD( F(-), Y ) \cong 0$ is representable by the zero object.

\emph{Second case:} There is an $X \in \cC$ such that $Y$ is a summand of $F(X)$. Then representability of $\cD( F(-), Y )$ follows as in the proof of \Cref{corollary:adjoints_rigid_case}.

This shows $\cD(F(-), Y)$ is representable for all $Y\in\cD$, and hence, $F$ has a right adjoint $R$.

In any semisimple $\kk$-linear category, consider a finite set of pairwise non-isomorphic simple objects $(X_i)_i$ and two corresponding sets of multiplicities $(m_i)_i,(n_i)_i$. Then
\begin{equation}
\label{eq:multiplicities}    
\Hom\Big(\bigoplus_i X_i^{\oplus m_i},\bigoplus_i X_i^{\oplus n_i}\Big)
\cong \bigoplus_i \End(X_i)^{m_in_i}
\cong \Hom\Big(\bigoplus_i X_i^{\oplus n_i},\bigoplus_i X_i^{\oplus m_i}\Big)
\end{equation}
as vector spaces.

Hence, for any pair of objects $X\in\cC$, $Y\in\cD$, there are isomorphisms
$$
\cD(X,F(Y))\cong\cD(F(Y),X)\cong\cC(Y,R(X))\cong\cC(R(X),Y)
$$
of vector spaces, where the middle isomorphism comes from the previously discussed adjunction, and the other two isomorphisms are as in \Cref{eq:multiplicities}. As $\cD$ is semisimple, this means we have an isomorphism of functors $\cD(-,F(Y))\cong\cC(R(-),Y)$, for each $Y\in\cC$. This implies that $R$ is also a left adjoint of $F$. 
\end{proof}

\subsection{Adjunctions and ind-completions} 

Sometimes, we will have to pass to ind-completions to obtain adjoint functors.

For this section, let $\cC,\cD$ be $\kk$-linear rigid monoidal categories with ind-completions $\ti\cC:=\Ind(\cC)$ and $\ti\cD:=\Ind(\cD)$. Let $F\:\cC\to\cD$ be a dominant linear monoidal functor, let $\ti F:=\Ind(F)$ be the induced functor between the ind-completions, see \Cref{appendix:ind}. 

\begin{lemma} \label{lem:adjoint-ind} Assume there is $X\in\ti\cC$ such that 
$$
\ti\cD(\ti F(-),\one)|_{\cC^{\op}}\cong\ti\cC(-,X)|_{\cC^{\op}}.
$$
Then $\ti F$ has a right adjoint sending  $\one\mapsto X$.
\end{lemma}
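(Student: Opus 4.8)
The plan is to show that $\ti F$ has a right adjoint by proving that the functor $\ti\cD(\ti F(-),W)\colon\ti\cC^{\op}\to\Vect$ is representable by an object of $\ti\cC$ for \emph{every} $W\in\ti\cD$; specializing to $W=\one$ then forces the representing object to be $X$, giving $R(\one)\cong X$. The central device is the fully faithful embedding $\ti\cC=\Ind(\cC)\hookrightarrow\PSh(\cC)$ of \Cref{appendix:ind}, whose essential image consists of the filtered colimits of representables. Since $\ti F=\Ind(F)$ preserves filtered colimits and any contravariant hom sends colimits to limits, both $\ti\cD(\ti F(-),W)$ and $\ti\cC(-,x)$ (for $x\in\ti\cC$) convert filtered colimits in their argument into limits, so each is determined by its restriction to $\cC^{\op}$. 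Writing $c=\colim_i c_i$ with $c_i\in\cC$, a natural isomorphism on $\cC^{\op}$ therefore extends uniquely via $\ti\cC(c,x)=\lim_i\ti\cC(c_i,x)$ and the analogous formula on the $\ti\cD$-side. Hence I reduce the problem to the assertion that the restriction $\ti\cD(\ti F(-),W)|_{\cC^{\op}}$ lies in $\Ind(\cC)\subset\PSh(\cC)$ for all $W$.

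I would establish this in increasing generality. For $W=\one$ it is exactly the hypothesis, which identifies the restriction with $\ti\cC(-,X)|_{\cC^{\op}}$, visibly an object of $\Ind(\cC)$. For $W=\ti F(Y)$ with $Y\in\cC$, I run the computation of \Cref{lemma:representable_of_image_object} inside the ind-categories: for $c\in\cC$ one has $\ti\cD(\ti F(c),\ti F(Y))\cong\ti\cD(\ti F(c\o{}^*Y),\one)$ because $\ti F$ is monoidal and $F(Y)\in\cD$ remains rigid in $\ti\cD$. Applying the hypothesis at the object $c\o{}^*Y\in\cC$ and then the duality adjunction $\ti\cC(c\o{}^*Y,X)\cong\ti\cC(c,X\o Y)$, valid since ${}^*Y$ stays dualizable in $\ti\cC$, exhibits the restriction as $\ti\cC(-,X\o Y)|_{\cC^{\op}}$, again in $\Ind(\cC)$.

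For $W=d\in\cD$, dominance of $F$ supplies $Y\in\cC$ with $d$ a direct summand of $\ti F(Y)$, so $\ti\cD(\ti F(-),d)|_{\cC^{\op}}$ is a retract of $\ti\cC(-,X\o Y)|_{\cC^{\op}}\in\Ind(\cC)$; as $\Ind(\cC)$ is idempotent complete and closed under retracts, the restriction again lies in $\Ind(\cC)$. Finally, for arbitrary $W=\colim_j d_j$ written as a filtered colimit of objects $d_j\in\cD$, compactness in $\ti\cD$ of the objects $\ti F(c)=F(c)$ for $c\in\cC$ gives pointwise, hence in $\PSh(\cC)$, that $\ti\cD(\ti F(-),W)|_{\cC^{\op}}\cong\colim_j\ti\cD(\ti F(-),d_j)|_{\cC^{\op}}$, a filtered colimit of objects of $\Ind(\cC)$, which stays in $\Ind(\cC)$. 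This proves representability for every $W$, yields the right adjoint $R$, and $R(\one)\cong X$ as above. The step I expect to require the most care is the reduction in the first paragraph, namely upgrading representability of the restriction to $\cC^{\op}$ to genuine representability on all of $\ti\cC^{\op}$; this rests entirely on the bookkeeping that both functors send filtered colimits to limits together with the description of $\Ind(\cC)$ as the filtered-colimit closure of representables inside $\PSh(\cC)$. Once that equivalence is in place, the inductive verification over the cases $\one$, $\ti F(Y)$, $d\in\cD$, and general $W$ is routine given \Cref{lemma:representable_of_image_object}.
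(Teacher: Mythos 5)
Your proposal is correct and follows essentially the same route as the paper: reduce (via the criterion that $\ti\cD(\ti F(-),W)|_{\cC^{\op}}$ lies in $\Ind(\cC)$, which is \Cref{lem:adj-ind}) to objects of $\cD$, use monoidality and rigidity to identify $\ti\cD(\ti F(-),F(Y))|_{\cC^{\op}}$ with $\ti\cC(-,X\o Y)|_{\cC^{\op}}$, and use dominance to cover all of $\cD$. The only cosmetic differences are that the paper replaces $\cD$ by the essential image of $F$ up front and cites the appendix lemma, where you handle direct summands via retracts and re-derive the filtered-colimit bookkeeping inline.
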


\begin{proof} 
Let $\cD_0\subset\cD$ be the full subcategory spanned by isomorphism classes of objects in the image of $F$. As $F$ is dominant, the ind-completions of $\cD$ and $\cD_0$ coincide, so we may replace $\cD$ by $\cD_0$ and assume that $F$ is essentially surjective.

To show that $\ti F$ has a right adjoint, it is now enough to show that $\ti\cD(\ti F(-),F(Y))|_{\cC^{\op}}$ is ind-representable, for all $Y\in\cD$, by \Cref{lem:adj-ind}. 

As $\cC$ and $\cD$ are rigid, and $F$ is monoidal,
$$
\ti\cD(\ti F(-),F(Y))\cong \ti\cD(\ti F(-),\one)\circ (Y^*\o-),
$$
and $Y^*\o-$ has a right adjoint. Hence, it suffices to show $\ti\cD(\ti F(-),\one)$ is ind-representable, as asserted.
\end{proof}

\begin{lemma} \label{lem:adj-filtered} Assume $\cC=\bigcup_{i\ge0} \cC_i$, for a chain of full subcategories $\cC_0\subset\cC_1\subset\dots$ of $\cC$, and $\cD(F(-),\one)|_{\cC_i}$ is representable by an object $X_i\in\cC_i$, for all $i$.
Then there is a diagram
$$
X_0 \xrightarrow{h_0} X_1 \xrightarrow{h_1} \dots
$$
in $\cC$ such that $\ti F$ has a right adjoint sending $\one\mapsto\ti X:=\colim_i(X_i)$.
\end{lemma}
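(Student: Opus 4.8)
The plan is to reduce the statement to \Cref{lem:adjoint-ind} by exhibiting $\ti X:=\colim_i(X_i)$ as a representing object for $\ti\cD(\ti F(-),\one)|_{\cC^{\op}}$, after first constructing the transition maps $h_i$ of the diagram. First I would record what representability gives concretely: by the Yoneda lemma, the representation of $\cD(F(-),\one)|_{\cC_i}$ by $X_i$ is encoded by a universal element $u_i\in\cD(F(X_i),\one)$ such that, for every $W\in\cC_i$, the assignment $g\mapsto u_i\circ F(g)$ is an isomorphism $\cC(W,X_i)\xrightarrow{\sim}\cD(F(W),\one)$ that is natural in $W$.

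Next I would build the transition maps. Since $X_i\in\cC_i\subset\cC_{i+1}$, applying the $\cC_{i+1}$-representation isomorphism to $W=X_i$ produces a unique morphism $h_i\colon X_i\to X_{i+1}$ characterized by the identity $u_{i+1}\circ F(h_i)=u_i$. This yields the asserted diagram $X_0\xrightarrow{h_0}X_1\xrightarrow{h_1}\cdots$ (no further coherence is needed, as a sequence of composable maps suffices to define a filtered colimit over $\NN$), and I set $\ti X:=\colim_i(X_i)\in\ti\cC$.

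The core step is to verify the hypothesis of \Cref{lem:adjoint-ind}, namely $\ti\cD(\ti F(-),\one)|_{\cC^{\op}}\cong\ti\cC(-,\ti X)|_{\cC^{\op}}$. Fix $W\in\cC$; since $\ti F(W)=F(W)$ and hom-spaces between objects of $\cD$ are unchanged in $\ti\cD$, the left-hand side is simply $\cD(F(W),\one)$. Because objects of $\cC$ are compact in $\ti\cC$ (see \Cref{appendix:ind}) and the colimit defining $\ti X$ is filtered, the right-hand side is $\colim_i\cC(W,X_i)$. As $\cC=\bigcup_i\cC_i$, there is some $N$ with $W\in\cC_N$, and for $i\ge N$ the maps $g\mapsto u_i\circ F(g)$ identify each $\cC(W,X_i)$ with $\cD(F(W),\one)$; the decisive compatibility is that post-composition with $h_i$ corresponds to the identity, since $u_{i+1}\circ F(h_i\circ g)=u_{i+1}\circ F(h_i)\circ F(g)=u_i\circ F(g)$. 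Hence the cofinal tail $(\cC(W,X_i))_{i\ge N}$ is isomorphic to the constant diagram on $\cD(F(W),\one)$ with identity transitions, so its colimit is $\cD(F(W),\one)$, giving the desired isomorphism. Naturality in $W\in\cC^{\op}$ follows from the naturality of each $\cC_i$-representation on any subcategory containing a given pair of objects, which is preserved under passage to the filtered colimit.

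Finally, \Cref{lem:adjoint-ind} applied with $X=\ti X$ shows that $\ti F$ has a right adjoint sending $\one\mapsto\ti X=\colim_i(X_i)$, as claimed. The hard part is the compatibility of the transition maps $h_i$ with the representation isomorphisms, i.e.\ showing that the colimit of hom-spaces stabilizes, together with the use of compactness of objects of $\cC$ in $\ti\cC$ to commute $\ti\cC(W,-)$ past the filtered colimit; once $h_i$ is defined by $u_{i+1}\circ F(h_i)=u_i$ this becomes a short computation, and the remainder of the argument is bookkeeping around \Cref{lem:adjoint-ind}.
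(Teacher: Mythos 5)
Your proposal is correct and follows essentially the same route as the paper: the universal elements $u_i$ are the paper's $p_i$, the transition maps $h_i$ are defined by the same compatibility $u_{i+1}\circ F(h_i)=u_i$, and the verification that $\ti\cC(W,\ti X)\cong\colim_i\cC(W,X_i)\cong\cD(F(W),\one)$ via compactness and the stabilizing cofinal tail is exactly the paper's argument before invoking \Cref{lem:adjoint-ind}.
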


\begin{proof} By the assumptions, there are $p_i\in\cD(F(X_i),\one)$ such that the natural transformations
$$
\phi_i := p_i F(-) \:\quad
\cC(-,X_i)|_{\cC_i} \to \cD(F(-),\one)|_{\cC_i}
$$
are isomorphisms. Let $h_i$ be the preimage of $p_i$ under $(\phi_{i+1})_{X_i}$, for all $i$, then 
$$
p_i = p_{i+1} F(h_i) .
$$

By \Cref{lem:adjoint-ind}, it suffices to show that $\ti\cD(\ti F(-),\one)|_{\cC^{\op}}\cong\ti\cC(-,\ti X)|_{\cC^{\op}}$. Let $p\in\ti\cD(\ti F(\ti X),\one)$ be the morphism given by the family $(p_i)_i$. It is well-defined due to the above identity.

For any $i\ge0$ and $Y\in\cC_i$, the map
$$
p\ti F(-) \:\quad
\ti\cC(Y, \ti X) \to \ti\cD(\ti F(Y),\one)
$$
is an isomorphism, because it is described by
$$
\colim_{j \geq i} \left(\cC(Y, X_j) \xrightarrow{\phi_j|_Y}\ti\cD(\ti F(Y),\one)\right)
$$
where the maps $\phi_j|_Y$ are isomorphisms. The isomorphism is natural in $Y$, as it is of the form described by the Yoneda lemma.
\end{proof}

\subsection{Adjunctions and splitting objects}

We will see that adjunctions involving a monoidal functor can be used to find splitting objects (as in \Cref{sec:ab-env-diag}).

The following is a slight generalization of \cite{FLP2}*{Corollary~4.11, Section~4.4}.

\begin{proposition} \label{prop:projection-formulas}
Let $\adj F \cC \cD F'$ be a pair of functors such that $\cC$ and $\cD$ are monoidal, $F$ is monoidal, and $\cC'\subset\cC$ is a monoidal subcategory. Then there is an isomorphism of functors
$$
F'(X)\o Y\cong F'(X\o F(Y))
$$
on $\cC\times\cC'$ if 
\begin{itemize}
    \item $\cC'$ is left rigid and $F'$ is a left adjoint of $F$, or 
    \item $\cC'$ is right rigid and $F'$ is a right adjoint of $F$.
\end{itemize}

Similarly, there is an isomorphism of functors 
$$
X\o F'(Y)\cong F'(F(X)\o Y)
$$
on $\cC'\times\cC$ if
\begin{itemize}
    \item $\cC'$ is right rigid and $F'$ is a left adjoint of $F$, or 
    \item $\cC'$ is left rigid and $F'$ is a right adjoint of $F$.
\end{itemize}
\end{proposition}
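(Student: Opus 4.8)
The plan is to prove all four isomorphisms by the Yoneda lemma, reducing each to a natural isomorphism of representable functors that is assembled from a single chain of standard adjunction isomorphisms. The only genuinely new inputs are the rigidity adjunctions on $\cC'$ and on $\cD$, the adjunction between $F$ and $F'$, and the compatibility of $F$ with duals (a monoidal functor between rigid categories preserves duals, as already used in \Cref{lemma:representable_of_image_object}).

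I would treat the first formula $F'(X)\o Y\cong F'(X\o F(Y))$ first, in the case that $\cC'$ is left rigid and $F'$ is a left adjoint of $F$. Since $F'\dashv F$, the functor $F'$ naturally occupies the \emph{first} argument of a hom-space, so I would test against an arbitrary $W\in\cC$ in the \emph{second} argument and build a natural isomorphism $\cC(F'(X)\o Y,W)\cong\cC(F'(X\o F(Y)),W)$ from the chain
$$
\cC(F'(X)\o Y,W)\cong\cC(F'(X),W\o Y^*)\cong\cD(X,F(W\o Y^*)),
$$
using first the rigidity adjunction $(-\o Y)\dashv(-\o Y^*)$ for the left dual $Y^*$ of $Y\in\cC'$, then $F'\dashv F$. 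As $F$ is monoidal and compatible with duals, $F(W\o Y^*)\cong F(W)\o F(Y)^*$, and the rigidity adjunction in $\cD$ together with $F'\dashv F$ gives
$$
\cD(X,F(W)\o F(Y)^*)\cong\cD(X\o F(Y),F(W))\cong\cC(F'(X\o F(Y)),W).
$$
Yoneda then yields the claimed isomorphism, with naturality in $X$ and $Y$ coming for free since every arrow in the chain is natural in all its variables.

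For the second variant of this formula ($\cC'$ right rigid, $F'$ a right adjoint of $F$), I would run the mirror argument: now $F\dashv F'$ puts $F'$ in the \emph{second} hom-argument, so I test against $W$ in the first argument and use the right dual ${}^*Y$ with the rigidity adjunction $(-\o{}^*Y)\dashv(-\o Y)$ and the preservation of right duals $F({}^*Y)\cong{}^*F(Y)$. The two displayed isomorphisms with the tensor factors exchanged, $X\o F'(Y)\cong F'(F(X)\o Y)$, are the left--right mirror images, obtained by the identical chain with all tensorings performed on the opposite side; this is precisely why the required rigidity on $\cC'$ switches between left and right between the two formulas.

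The main obstacle is purely the handedness bookkeeping: one must pair a left (respectively right) adjoint $F'$ with the correct side of the hom-space and the matching left or right dual on $\cC'$, and then use the correspondingly handed dual of $F(Y)$ in $\cD$, so that each adjunction isomorphism in the chain is actually valid. Once this pairing is fixed as above, each of the four cases is a formal consequence of the rigidity adjunctions, the monoidality of $F$ and its compatibility with duals, and the adjunction $F\dashv F'$ or $F'\dashv F$, so no further calculation beyond verifying that these standard isomorphisms compose is needed.
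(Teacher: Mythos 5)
Your proposal is correct and follows essentially the same route as the paper: the paper's proof is exactly the chain $\cC(F'(X)\o Y,Z)\cong\cC(F'(X),Z\o Y^*)\cong\cD(X,F(Z)\o F(Y)^*)\cong\cD(X\o F(Y),F(Z))\cong\cC(F'(X\o F(Y)),Z)$ followed by Yoneda, treating only the left-rigid/left-adjoint case and declaring the others similar, just as you do. Your intermediate step making $F(W\o Y^*)\cong F(W)\o F(Y)^*$ explicit is a harmless expansion of what the paper compresses into one isomorphism.
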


\begin{proof} We only discuss the case that $\cC'$ is left rigid and $F'$ is a left adjoint of $F$, the other cases being similar. We have isomorphisms of functors
\begin{align*}
\cC(F'(X)\o Y,Z) 
&\cong \cC(F'(X), Z\o Y^*)
\cong \cD(X, F(Z)\o F(Y)^*) \\
&\cong \cD(X\o F(Y), F(Z))
\cong \cC(F'(X\o F(Y)), Z) ,
\end{align*}
on $\cC\op\times(\cC')\op\times\cC$, using that the image of a (left) dualizable object under a monoidal functor is (left) dualizable. This shows the assertion.
\end{proof}

In particular, this implies that splitting objects are transferred along monoidal adjunctions between rigid categories. Recall from \Cref{def:splitting} that a morphism $f$ is split if there is a morphism $g$ such that $f=fgf$, and that a left (right) splitting object for a morphism $f$ is a non-zero object $X$ such that $X\o f$ ($f\o X$) is split.

\begin{corollary} \label{cor:transport-splitting}
Let  $\adj{F}{\cC}{\cD}{F'}$ be a pair of functors such that $\cC$, $\cD$, $F$ are monoidal, $F'$ is a left \slash{} right adjoint of $F$, $\cC'\subset\cC$ is a monoidal subcategory, and $f$ is a morphism in $\cC'$. Then $F'(X)$ is a left splitting object of $f$ in $\cC$, for every left splitting object $X$ of $F(f)$ in $\cD$, if $\cC'$ is left \slash{} right rigid.

Similarly, $F'(X)$ is a right splitting object of $f$, for every right splitting object $X$ of $F(f)$, if $\cC'$ is right \slash{} left rigid. 

 Moreover, $F'(X)$ is a non-zero object if there is a non-zero morphism of the form $X\to F(Z)$ \slash{} $F(Z)\to X$ for some $Z\in\cC$.
 \end{corollary}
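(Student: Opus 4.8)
The plan is to deduce everything from \Cref{prop:projection-formulas}, which provides the key projection isomorphisms, together with the elementary observation that split morphisms are preserved by any functor. First I would treat the left-splitting case with $F'$ a left adjoint of $F$ and $\cC'$ left rigid. The morphism $f$ lives in $\cC'$, so $F(f)$ is a morphism in $\cD$, and by hypothesis $X\o F(f)$ is split in $\cD$, meaning there is $g$ with $X\o F(f) = (X\o F(f))\,g\,(X\o F(f))$. Applying the functor $F'$ preserves this relation, so $F'(X\o F(f))$ is split in $\cC$. The heart of the argument is then to identify $F'(X\o F(f))$ with $F'(X)\o f$ up to the natural isomorphism of \Cref{prop:projection-formulas}: since that proposition gives a natural isomorphism $F'(-\o F(-))\cong F'(-)\o(-)$ on $\cC\times\cC'$, naturality in the second variable applied to the morphism $f\in\cC'$ yields a commuting square identifying $F'(X)\o f$ with $F'(X\o F(f))$. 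As split morphisms are an isomorphism-invariant notion (being split is preserved under pre- and post-composition with isomorphisms), $F'(X)\o f$ is split, so $F'(X)$ is a left splitting object for $f$, provided it is non-zero.

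The remaining three combinations of (left/right adjoint, left/right rigid) for left- and right-splitting objects follow by the same template, invoking the appropriate one of the four isomorphisms collected in \Cref{prop:projection-formulas}. For right splitting objects I would use the second pair of isomorphisms $X\o F'(Y)\cong F'(F(X)\o Y)$, matching the rigidity and adjointness hypotheses as stated; the bookkeeping is routine once the left case is written out, so I would simply remark that the other cases are analogous.

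The one genuinely separate point is the non-vanishing of $F'(X)$, which is needed for $F'(X)$ to qualify as a splitting object per \Cref{def:splitting}. Here I would use the adjunction directly: a non-zero morphism $X\to F(Z)$ corresponds under the adjunction $\cD(X,F(Z))\cong\cC(F'(X),Z)$ (when $F'$ is a left adjoint) to a morphism $F'(X)\to Z$, and this correspondence is a $\kk$-linear bijection, so the non-zero morphism on the $\cD$ side produces a non-zero morphism out of $F'(X)$, forcing $F'(X)\not\cong 0$. Dually, when $F'$ is a right adjoint, a non-zero morphism $F(Z)\to X$ corresponds via $\cD(F(Z),X)\cong\cC(Z,F'(X))$ to a non-zero morphism $Z\to F'(X)$, again forcing $F'(X)\not\cong 0$.

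I expect the main obstacle to be purely organizational rather than mathematical: keeping the four rigidity/adjointness combinations aligned with the correct projection formula, and ensuring the naturality of the projection isomorphism is applied in the correct variable (the one ranging over $\cC'$, where $f$ lives) so that the identification $F'(X)\o f\cong F'(X\o F(f))$ genuinely respects the morphism $f$ and not merely its source and target. Once that naturality is pinned down, the preservation of splitness is immediate and the proof is short.
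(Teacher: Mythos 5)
Your proposal is correct and follows essentially the same route as the paper's proof: invoking \Cref{prop:projection-formulas} to obtain the natural isomorphism $F'(X)\o-\cong F'(X\o F(-))$ on $\cC'$, noting that every functor preserves split morphisms, handling the remaining adjointness/rigidity combinations symmetrically, and deducing non-vanishing from $\cC(F'(X),Z)\cong\cD(X,F(Z))\neq0$. Your explicit appeal to naturality in the variable where $f$ lives is exactly the content of the paper's (terser) statement that the isomorphism is one of functors on $\cC'$.
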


\begin{proof}
We only discuss the case that $F'$ is a left adjoint of $F$ and $\cC'$ is left rigid, the other cases being similar. By \Cref{prop:projection-formulas}, we have a natural isomorphism of functors
$$ F'(X)\o - \cong F'(X\o F(-))
$$
on $\cC'$. Now $F'(X)$ is a splitting object, as every functor preserves split morphisms, and $F'(X)$ is non-zero under the described assumption, as $\cC(F'(X),Z)\cong\cD(X,F(Z))\neq0$.
\end{proof}

\subsection{Abelian envelopes from monoidal adjunctions}
\label{sec:abelian-envelopes}
We record some consequences of this transfer of splitting objects regarding the existence of abelian envelopes.

\begin{theorem} \label{cor:ab-env-from-functor-semisimple}
Assume that
\begin{enumerate}
    \item [(a)] $\cC$ is an $\oplus$-pseudo-diagrammatic (in the sense of \Cref{def:pseudo-diagrammatic-plus}) pseudo-tensor category, 

\item[(b)] $\cD$ is a tensor category with enough projectives, and

\item[(c)] there is a linear monoidal functor $F\:\cC\to\cD$. 
\end{enumerate}
Assume also that
\begin{enumerate}
    \item [(d)] $F$ has a left \slash{} right adjoint $F'$ or

\item[(d')] $\cC$ is braided and $\Ind(F)$ has a left \slash{} right adjoint $F'$.
\end{enumerate}
Then for any projective object $P\in\cD$ with a non-zero morphism to \slash{} from $\one\in\cD$, $F'(P)$ is, for all morphisms in $\cC$, a non-zero left and right splitting object in $\cC$ or $\Ind(\cC)$, respectively, and in both cases, $\cC$ has an abelian envelope with the quotient property. 
\end{theorem}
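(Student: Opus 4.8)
The plan is to assemble this result by combining the machinery already developed in the preceding subsections, so the proof is essentially a bookkeeping exercise verifying that all hypotheses of the relevant earlier statements are met. First I would observe that under assumption (a), the category $\cC$ is $\oplus$-pseudo-diagrammatic, hence by \Cref{cor:add-pseudo-diagrammatic} it satisfies \hyperref[cond:ex1]{(Ex1)} and \hyperref[cond:ex2]{(Ex2)}, and since $\cC$ is a pseudo-tensor category it is rigid; therefore \Cref{prop:U-equals-Uex} gives the necessary exactness condition $\U(\cC)=\Uex(\cC)$. This is precisely the hypothesis common to both parts of \Cref{cor:abenv-for-diag}, so it remains only to produce splitting objects (in $\cC$ in case (d), or in $\Ind(\cC)$ in case (d')).

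Next I would produce the splitting objects via the transfer result \Cref{cor:transport-splitting}. The key input is \Cref{ex:semisimple-splitting}(b): any projective object $P$ in the tensor category $\cD$ is a splitting object for \emph{every} morphism in $\cD$. Taking an arbitrary morphism $f$ in $\cC$, its image $F(f)$ is a morphism in $\cD$, and $P$ is a left (equivalently right) splitting object of $F(f)$. Applying \Cref{cor:transport-splitting} with $\cC'=\cC$ (which is rigid, so all the left/right rigidity conditions are met) and the adjoint $F'$, I conclude that $F'(P)$ is a left and right splitting object of $f$ in $\cC$. The non-vanishing clause of \Cref{cor:transport-splitting} is exactly where the hypothesis ``$P$ has a non-zero morphism to/from $\one$'' is used: a non-zero map $P\to F(\one)=\one$ (or $\one\to P$) furnishes the required non-zero morphism $X\to F(Z)$ (or $F(Z)\to X$) with $Z=\one$, guaranteeing $F'(P)\neq 0$.

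The two cases (d) and (d') are then handled in parallel. In case (d), $F'$ is a genuine adjoint on $\cC$, so $F'(P)$ lands in $\cC$ and we obtain splitting objects in $\cC$ itself; invoking \Cref{cor:abenv-for-diag}(a) finishes the argument. In case (d'), the adjoint $F'$ exists only on ind-completions, so $F'(P)$ lives in $\Ind(\cC)$ and provides splitting objects in $\Ind(\cC)$; here the additional hypothesis that $\cC$ is braided supplies the missing ingredient, and \Cref{cor:abenv-for-diag}(b) applies. In both cases the conclusion is an abelian envelope with the quotient property.

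The main (and essentially only) subtlety I anticipate is matching the rigidity bookkeeping in \Cref{cor:transport-splitting}: that corollary is stated with a subcategory $\cC'$ carrying left- or right-rigidity hypotheses that are correlated with whether $F'$ is a left or right adjoint, and one must check that taking $\cC'=\cC$ (fully rigid, since $\cC$ is a pseudo-tensor category) simultaneously yields both left and right splitting objects regardless of which side $F'$ adjoins. Since full rigidity makes all four clauses of \Cref{cor:transport-splitting} applicable at once, this is not a genuine obstacle, merely a point requiring care in the write-up. I would simply state that $\cC$ is rigid and invoke both the left- and right-splitting conclusions of the corollary, noting that $F(\one)\cong\one$ as $F$ is monoidal so that the non-vanishing hypothesis translates exactly into the stated condition on $P$.
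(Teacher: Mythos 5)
Your proposal is correct and follows essentially the same route as the paper's proof: projectives in $\cD$ are splitting objects by \Cref{ex:semisimple-splitting}(b), the adjunction transfers them to non-zero splitting objects $F'(P)$ via \Cref{cor:transport-splitting} (with the non-vanishing supplied by the hypothesis on morphisms to/from $\one$), and \Cref{cor:abenv-for-diag} concludes. The paper's write-up is just a terser version of the same argument.
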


\begin{proof} First note that $P$ is a splitting object in $\cD$ by \Cref{ex:semisimple-splitting}(b). Hence, $F'(P)$ is a non-zero (left and right) splitting object as asserted by \Cref{cor:transport-splitting}. This implies that $\cC$ has an abelian envelope with the quotient property by \Cref{cor:abenv-for-diag}.
\end{proof}

In particular, $\cD$ could even be semisimple, in which case $F'(\one)$ is a splitting object.

\begin{remark}
    We will see in \Cref{cor:criterion-enough-proj} below that under the assumption (d), the abelian envelope from \Cref{cor:ab-env-from-functor-semisimple} even has enough projectives.
\end{remark}

We also record a version of \Cref{cor:ab-env-from-functor-semisimple}, where only the images of the members of a filtration of $\cC$ under varying functors are assumed to be semisimple. 

\begin{theorem} \label{cor:ab-env-from-functor-semisimple-filtered}
Assume 
\begin{enumerate}
    \item [(a)] $\cC$ is an $\oplus$-pseudo-diagrammatic (in the sense of \Cref{def:pseudo-diagrammatic-plus}) pseudo-tensor category,

\item[(b)] $\cC=\bigcup_{i\ge0} \cC_i$, for full subcategories $\cC_0\subset\cC_1\subset\dots$, and 

\item[(c)] there are pseudo-tensor categories $(\cD_i)_{i\ge0}$ and linear monoidal functors $F_i:\cC\to\cD_i$ such that the full subcategory generated by $F_i(\cC_i)$ is semisimple, for all $i$.
\end{enumerate}
Assume also that 
\begin{enumerate}
    \item [(d)] $F_i$ has a left or right adjoint $F'_i$, for all $i$, or

\item[(d')] $\cC$ is braided and $\Ind(F_i)$ has a left or right adjoint $F'_i$, for all $i$.
\end{enumerate}
Then all morphisms in $\cC$ have splitting objects in $\cC$ or $\Ind(\cC)$, respectively, and in both cases, $\cC$ has an abelian envelope with the quotient property.
\end{theorem}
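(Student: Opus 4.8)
The plan is to reduce Theorem~\ref{cor:ab-env-from-functor-semisimple-filtered} to the existence of splitting objects, since the necessary exactness condition $\U(\cC)=\Uex(\cC)$ is already supplied by the $\oplus$-pseudo-diagrammatic hypothesis (a) via \Cref{cor:abenv-for-diag}. Thus, once splitting objects are produced (in $\cC$ under (d), or in $\Ind(\cC)$ under (d')), the abelian envelope with the quotient property follows directly from \Cref{cor:abenv-for-diag}(a) or (b), respectively. The entire content of the proof is therefore the construction, for each morphism $f$ in $\cC$, of non-zero left and right splitting objects.

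First I would fix a morphism $f$ in $\cC$. Since $\cC=\bigcup_{i\ge0}\cC_i$ and the $\cC_i$ form a chain of full subcategories, the morphism $f$ lies in some $\cC_i$; replacing $i$ by a larger index if necessary, I may assume $f\in\cC_i$ and that $\cC_i$ is closed under the monoidal operations enough to treat $f$ inside it (or simply regard $\cC_i$ as the monoidal subcategory $\cC'\subset\cC$ to which \Cref{cor:transport-splitting} is applied). Now consider the functor $F_i\:\cC\to\cD_i$ from hypothesis (c). By assumption the full subcategory $\cD_i^{\mathrm{ss}}$ generated by $F_i(\cC_i)$ is semisimple, and $F_i(f)$ is a morphism in $\cD_i^{\mathrm{ss}}$. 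In a semisimple category every morphism is split (\Cref{ex:semisimple-splitting}(a)), so the tensor unit $\one$ (or indeed any non-zero object) of $\cD_i$ is a left and right splitting object of $F_i(f)$.

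Next I would transport these splitting objects back to $\cC$ along the adjoint $F'_i$. Under hypothesis (d), $F_i$ has a left or right adjoint $F'_i$; applying \Cref{cor:transport-splitting} with $\cC'=\cC_i$ (which is rigid as a pseudo-tensor subcategory) and $X=\one\in\cD_i$, I obtain that $F'_i(\one)$ is a non-zero left and right splitting object of $f$ in $\cC$ --- non-zero precisely because the identity $\one\to F_i(\one)$ provides the required non-zero morphism in the last clause of \Cref{cor:transport-splitting}. Under hypothesis (d') instead, $\cC$ is braided and $\Ind(F_i)$ has the adjoint $F'_i$; the same application of \Cref{cor:transport-splitting}, now inside the rigid monoidal pair $\Ind(\cC)\rightleftarrows\Ind(\cD_i)$, yields a non-zero splitting object $F'_i(\one)$ for $f$ in $\Ind(\cC)$. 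In either case every morphism of $\cC$ acquires left and right splitting objects in $\cC$ or $\Ind(\cC)$ respectively, which is exactly the hypothesis of \Cref{cor:abenv-for-diag}(a) or (b).

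The main obstacle I anticipate is purely bookkeeping rather than conceptual: I must make sure that the monoidal subcategory to which \Cref{cor:transport-splitting} is applied is genuinely rigid and contains $f$, and that $F'_i$ and its handedness (left versus right) are matched correctly against the rigidity so that the projection formula of \Cref{prop:projection-formulas} applies on the correct side. Since the hypotheses allow the choice of a \emph{left or right} adjoint to vary with $i$, I should phrase the argument so that, whichever adjoint exists, \Cref{cor:transport-splitting} delivers both a left and a right splitting object simultaneously --- this works because $\cC_i$, being a pseudo-tensor subcategory, is \emph{both} left and right rigid, so the relevant clause of \Cref{cor:transport-splitting} is available regardless of which one-sided adjoint $F'_i$ happens to be. With that observation the proof is essentially a one-line invocation of the earlier machinery, and I would keep it correspondingly brief.
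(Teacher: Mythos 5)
Your proposal is correct and follows essentially the same route as the paper: transport the splitting object $\one\in\cD_i$ (which splits $F_i(f)$ since $F_i(f)$ lies in a semisimple subcategory) back along $F'_i$ via \Cref{cor:transport-splitting}, then invoke \Cref{cor:abenv-for-diag}. The only notable divergence is in handling the one-sidedness of the adjoint: the paper appeals to \Cref{cor:adjoint-semisimple} to upgrade $F'_i$ to a two-sided adjoint, whereas you instead use that the rigid category containing $f$ is both left and right rigid so that either clause of \Cref{cor:transport-splitting} yields both left and right splitting objects --- an equally valid (and arguably more directly applicable) way to close that gap.
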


\begin{proof} Again, by \Cref{cor:adjoint-semisimple}, $F_i$ is both a left and right adjoint. By assumption, any morphism $f$ in $\cC$ is contained in one of the full subcategories $\cC_i$. Hence, $F_i'(\one)$ is a non-zero splitting object for $f$ in $\Ind(\cC)$ by \Cref{cor:transport-splitting}, since any morphism is split in the semisimple category $\cD_i$. This implies that $\cC$ has an abelian envelope with the quotient property by \Cref{cor:abenv-for-diag}.
\end{proof}

\section{A construction of the abelian envelope with enough projectives}

We slightly generalize and simplify the construction of abelian envelopes with enough projectives in \cite{BEO} to give a concrete realization of abelian envelopes in some of our examples.
Our construction uses the theory of finitely presented functors, see \Cref{appendix:fp} for a brief introduction.

Let $\cC$ be a pseudo-tensor category over a field $\kk$.
In this section, we will make use of objects $S \in \cC$ such that $S \otimes f$ and $f \otimes S$ are split for all morphisms $f$ in $\cC$. In other words, $S$ is both a left and right splitting object for any morphism in $\cC$ in the sense of \Cref{def:splitting}, and we refer to such objects as \emph{global splitting objects}.
We denote by $\cS \subseteq \cC$ the full subcategory spanned by all global splitting objects in $\cC$.
It is easy to see that $\cS$ is pseudo-abelian, i.e., it has finite direct sums and is closed under taking finite direct summands.
Moreover, $\cS$ is closed under taking duals in $\cC$, and if $A \in \cC$ and $S \in \cS$, then both $A \otimes S$ and $S \otimes A$ are in $\cS$.

It is the goal of this section to prove the following theorem.
\begin{theorem} \label{thm:ab-env-projectives}
The following are equivalent: 
\begin{enumerate}
    \item[(a)] The category $\cC$ has a monoidal abelian envelope with enough projectives.
\item [(b)] The tensor unit $\one$ of $\cC$ has a presentation by global splitting objects, i.e., there is a cokernel sequence
$$
S \rightarrow T \rightarrow \one \rightarrow 0
$$
with $S, T \in \cS$.
\item [(c)] $\cC$ has a non-zero global splitting object and $\cU(\cC)=\Uex(\cC)$ (see \Cref{def:U-Uex}).
\end{enumerate}
In this case, the monoidal abelian envelope is given by the category of finitely presented functors $\rmod{\cS}$ together with the functor
$$(A \mapsto \cC(-,A)|_{\cS}): \cC \rightarrow \rmod{\cS}.$$
\end{theorem}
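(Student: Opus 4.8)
The plan is to prove the cyclic chain of implications $(a)\Rightarrow(c)\Rightarrow(b)\Rightarrow(a)$, deducing the explicit description of the envelope as part of the last implication. For $(a)\Rightarrow(c)$, the existence of an abelian envelope $\ti\cC$ with enough projectives gives a fully faithful monoidal functor $\cC\hookrightarrow\ti\cC$ into a tensor category, so $\cU(\cC)=\Uex(\cC)$ by \Cref{prop:U-Uex}. To produce a non-zero global splitting object I would take a non-zero projective $P$ of $\ti\cC$ (one exists since $\one\neq0$ admits an epimorphism from a projective). An abelian envelope with enough projectives has the quotient property, so $P$ is a quotient of some $A\in\cC$; as $P$ is projective this epimorphism splits, exhibiting $P$ as a summand of $A$, hence $P\in\cC$ because $\cC$ is pseudo-abelian. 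Finally, a projective object is a splitting object for every morphism of $\ti\cC$ by \Cref{ex:semisimple-splitting}(b), and since $\cC\subset\ti\cC$ is full, $P$ is a global splitting object in $\cC$.

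For $(c)\Rightarrow(b)$: given a non-zero global splitting object $S_0$, set $T:=S_0^*\o S_0$ and $u:=\ev_{S_0}\colon T\to\one$; the snake identities force $u\neq0$ since $S_0\neq0$. As $\cS$ is a tensor ideal, $T\in\cS$ and $T\o T\in\cS$. The hypothesis $\cU(\cC)=\Uex(\cC)$ says precisely (see \Cref{def:U-Uex}) that $u$ is the coequalizer of $u\o T$ and $T\o u$, i.e. the cokernel of $u\o T-T\o u\colon T\o T\to T$, which is the desired presentation $T\o T\to T\to\one\to0$ by global splitting objects.

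The heart of the argument is $(b)\Rightarrow(a)$, via the direct construction; write $Y$ for the functor $A\mapsto\cC(-,A)|_{\cS}$. The engine is a covering lemma: for every $A\in\cC$ the morphism $A\o\gamma\colon A\o T\to A$ (with $\gamma\colon T\to\one$ the given presentation) is a split epimorphism through which every $g\colon S\to A$ with $S\in\cS$ factors. Indeed, $S\o\gamma$ is split because $S$ is a splitting object, and it is epic because $S\o-$ preserves the coequalizer; being split and epic it is a split epimorphism with section $s$, and then $g=(A\o\gamma)\circ\big((g\o T)\,s\big)$ by bifunctoriality of $\o$. Since $A\o T\in\cS$, this shows $Y$ is fully faithful and that each $\cC(-,A)|_{\cS}$ is a finitely generated functor on $\cS$. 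I would then equip $\rmod{\cS}$ with the Day convolution tensor product induced by the (non-unital) monoidal structure of the tensor ideal $\cS$; its unit is $\cC(-,\one)|_{\cS}$, and $(b)$ is exactly what makes this unit finitely presented and $Y$ monoidal, while closure of $\cS$ under duals makes $\rmod{\cS}$ rigid. The universal property is then checked by extending any faithful monoidal $G\colon\cC\to\cT'$ right-exactly along $Y$, sending the projective generators $Y(S)=\cC(-,S)|_{\cS}$ to $G(S)$ and using presentations by representables, so that $-\circ Y$ and this extension give mutually inverse equivalences $\Ten^{\mathrm{exact}}(\rmod{\cS},\cT')\simeq\Ten^{\mathrm{faith}}(\cC,\cT')$.

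The main obstacle will be showing that $\rmod{\cS}$ is genuinely abelian with enough projectives, which (for finitely presented functors, see \Cref{appendix:fp}) reduces to proving that $\cS$ has weak kernels; the mere cokernel property of the presentation of $\one$ does not give this. My plan is to promote the covering lemma to a finite \emph{presentation} of each $\cC(-,A)|_{\cS}$ — equivalently, to show that $A\o\beta\colon A\o S\to A\o T$ is a weak kernel of $A\o\gamma$ in $\cS$ — by combining the splitting property of the objects of $\cS$ with the rigidity of $\cC$ (so that $A\o-$ preserves the relevant colimits and duals keep all constructions inside the tensor ideal $\cS$). Once weak kernels are established, $\rmod{\cS}$ is abelian, its projectives are precisely the summands of representables, i.e. the image of the idempotent-complete $\cS$, so it has enough projectives, and the stated identification of the abelian envelope follows. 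As a byproduct, $(b)\Rightarrow(a)$ together with \Cref{prop:U-Uex} recovers $\cU(\cC)=\Uex(\cC)$, so the cycle closes.
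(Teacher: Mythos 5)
Your implications (a)$\Rightarrow$(c) and (c)$\Rightarrow$(b) essentially match the paper's arguments: the paper also locates a non-zero projective of the envelope inside $\cC$ (via \cite{CEOP}*{Lemma~2.2.4} rather than the quotient property, but the splitting-plus-idempotent-completeness mechanism is the same), and also obtains the presentation of $\one$ as $S\o S^*\o S\o S^*\to S\o S^*\to\one\to 0$ directly from $\U(\cC)=\Uex(\cC)$. Your covering lemma in (b)$\Rightarrow$(a) is correct and amounts to the paper's observation that tensoring the presentation of $\one$ with $A$ yields a presentation of $A$ by objects of $\cS$, from which full faithfulness of $A\mapsto\cC(-,A)|_{\cS}$ follows.

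The genuine gap is in your treatment of abelianness of $\rmod{\cS}$. You correctly identify via Freyd's criterion (\Cref{theorem:Freyd_main}) that the issue is whether $\cS$ has weak kernels, but your plan then only targets the special morphisms $A\o\gamma\:A\o T\to A$ (equivalently, finite presentations of the functors $\cC(-,A)|_{\cS}$). That is not enough: forming kernels of arbitrary natural transformations in $\rmod{\cS}$ requires a weak kernel for an \emph{arbitrary} morphism $\alpha\:A\to B$ of $\cS$, and having presentations of the representables controls cokernels, not kernels. The paper's key step here is \Cref{lemma:S_has_weak_kernels}: fix an epimorphism $\epsilon\:S\to\one$ with $S\in\cS$ (available from (b)); then $S\o\alpha$ splits, hence has a kernel $K$ that is a direct summand of $S\o A$ and so lies in $\cS$, and the composite $K\to S\o A\xrightarrow{\epsilon\o A}A$ is a weak kernel of $\alpha$, the verification using a section of the split epimorphism $\epsilon\o T'$ for a test object $T'\in\cS$. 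Nothing in your proposal produces this construction, and your claimed reduction points at the wrong statement. A second, smaller gap: rigidity of $\rmod{\cS}$ does not follow merely from closure of $\cS$ under duals; one needs the induced tensor product to be \emph{exact} (not just right exact) so that cokernels of morphisms between dualizable objects are again dualizable (\Cref{lemma:dualizable_criterion}), and the paper proves this exactness via the equivalence of weak kernel and weak cokernel sequences in $\cS$ (\Cref{corollary:equiv_weak_kernel_weak_cokernel}), resting on injectivity of the objects of $\cS$ in $\rmod{\cS}$. Both points are fillable with the ingredients you name (splitting plus rigidity of $\cC$), but as written the argument does not establish them.
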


\begin{corollary} \label{cor:criterion-enough-proj}
In \Cref{cor:ab-env-from-functor-semisimple}, under the assumption (d), the abelian envelope even has enough projectives, and is given by the category $\rmod{\cS}$, for $\cS$ the category of global splitting objects in $\cC$.
\end{corollary}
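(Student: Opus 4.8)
The plan is to reduce \Cref{cor:criterion-enough-proj} to \Cref{thm:ab-env-projectives} by verifying condition (c) of that theorem for $\cC$: that $\cC$ admits a non-zero global splitting object and that $\U(\cC)=\Uex(\cC)$. Once (c) is established, \Cref{thm:ab-env-projectives} yields both halves of the claim simultaneously, namely that the abelian envelope has enough projectives and that it is realized as $\rmod{\cS}$, where $\cS$ is the full subcategory of global splitting objects in $\cC$.

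The exactness equality is the easy half. Since hypothesis (a) of \Cref{cor:ab-env-from-functor-semisimple} provides that $\cC$ is $\oplus$-pseudo-diagrammatic, \Cref{prop:U-equals-Uex} gives $\U(\cC)=\Uex(\cC)$ with no further work. For the global splitting object, I would reuse the object $F'(P)$ already produced under assumption (d) in the proof of \Cref{cor:ab-env-from-functor-semisimple}. The key observation is that $\cC$ is rigid, being a pseudo-tensor category, and that any projective $P\in\cD$ is both a left and a right splitting object for every morphism of $\cD$ by \Cref{ex:semisimple-splitting}(b); feeding this into \Cref{cor:transport-splitting} with $\cC'=\cC$ shows that $F'(P)$ is simultaneously a left and a right splitting object for every morphism of $\cC$, which is exactly the definition of a global splitting object. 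The role of assumption (d), as opposed to (d'), is precisely that this splitting object lives in $\cC$ itself rather than in $\Ind(\cC)$, as condition (c) of \Cref{thm:ab-env-projectives} demands.

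The step I expect to be the main obstacle is the non-vanishing of $F'(P)$, which forces one to choose $P$ carrying a non-zero morphism to or from $\one$ of the handedness matching $F'$. When $F'$ is a left adjoint this is automatic: a projective cover $P\twoheadrightarrow\one$, available since $\cD$ has enough projectives, gives a non-zero $P\to\one$, so that $\cC(F'(P),\one)\cong\cD(P,\one)\neq0$ and hence $F'(P)\neq0$. When $F'$ is a right adjoint one instead needs a non-zero morphism $\one\to P$ with $P$ projective; here I would take any non-zero projective $P_0$ and set $P:=P_0\o P_0^*$, which remains projective because projectives form a tensor ideal in a rigid tensor category, and use the non-degenerate coevaluation $\coev\colon\one\to P_0\o P_0^*$ to obtain $\cC(\one,F'(P))\cong\cD(\one,P)\neq0$. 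With a non-zero global splitting object and $\U(\cC)=\Uex(\cC)$ both in hand, condition (c) holds and \Cref{thm:ab-env-projectives} completes the proof.
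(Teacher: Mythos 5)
Your proposal is correct and follows essentially the same route as the paper: deduce $\U(\cC)=\Uex(\cC)$ from the $\oplus$-pseudo-diagrammatic hypothesis via \Cref{prop:U-equals-Uex}, observe that under (d) the object $F'(P)$ produced in the proof of \Cref{cor:ab-env-from-functor-semisimple} is a non-zero global splitting object in $\cC$ itself, and conclude by \Cref{thm:ab-env-projectives}(c). The only addition is that you spell out how to choose $P$ so that $F'(P)\neq 0$ in both adjoint handedness cases, which the paper leaves implicit in the hypotheses of \Cref{cor:ab-env-from-functor-semisimple}; your constructions (projective cover of $\one$, resp.\ $P_0\o P_0^*$ with the coevaluation) are both valid.
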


\begin{proof}
    Under the assumptions in \Cref{cor:ab-env-from-functor-semisimple}, in particular, the pseudo-tensor category $\cC$ is pseudo-diagrammatic, so $\U(\cC)=\Uex(\cC)$, so by \Cref{thm:ab-env-projectives} it suffices to see that there is a non-zero global splitting object (for all morphism simultaneously). It was shown in the proof of \Cref{cor:ab-env-from-functor-semisimple} that this is the case under the assumption (d).
\end{proof}

To prove \Cref{thm:ab-env-projectives}, we start with an examination of projective objects in $\cC$ (see \Cref{definition:projective_obj}).
In the abelian case, the following observation is \Cref{ex:semisimple-splitting}(c):

\begin{lemma}\label{lemma:splitting_is_projective}
Every object $S \in \cS$ is a projective object in $\cC$.
\end{lemma}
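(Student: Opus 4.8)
The plan is to transport the argument of \Cref{ex:semisimple-splitting}(c) into the pseudo-abelian setting. Since $\cC$ is only pseudo-abelian, I cannot phrase the conclusion as ``$\Hom(-,S^*)$ is exact'', so instead I would verify projectivity (in the sense of \Cref{definition:projective_obj}) directly through its lifting characterization: I would show that $\cC(S,-)$ sends every cokernel sequence $A \xrightarrow{h} B \xrightarrow{g} C \to 0$ in $\cC$ to an exact sequence of $\kk$-vector spaces. The only non-formal point is surjectivity of $\cC(S,g)$, i.e.\ that every $\phi\colon S\to C$ lifts through the cokernel map $g$.

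The engine is the rigidity adjunction $\cC(S,Y)=\cC(\one\o S,Y)\cong\cC(\one,Y\o S^*)$, which is natural in $Y$. As noted just above the statement, $\cS$ is closed under duals, so $S^*$ is again a global splitting object. The key step is to analyze $g\o S^*$. Since $-\o S^*$ has adjoints on both sides by rigidity, it preserves cokernels, so $g\o S^*=\operatorname{coker}(h\o S^*)$; and since $S^*$ is a splitting object, $h\o S^*$ is split. I would then use the elementary fact that in a pseudo-abelian (idempotent-complete) category the cokernel of a split morphism $\rho$ is a split epimorphism: writing $\rho=\rho\sigma\rho$, the idempotent $\rho\sigma$ splits and its image equals $\im\rho$, exhibiting $\im\rho$ as a direct summand, so $\operatorname{coker}(\rho)$ is, up to isomorphism, the projection onto the complementary summand. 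Hence $g\o S^*$ is a split epimorphism; choose a section $s$. Given $\phi\colon S\to C$ with mate $\tilde\phi\in\cC(\one,C\o S^*)$, the element $s\tilde\phi\in\cC(\one,B\o S^*)$ satisfies $(g\o S^*)(s\tilde\phi)=\tilde\phi$, and by naturality of the adjunction in $Y$ its mate $\psi\colon S\to B$ satisfies $g\psi=\phi$. This produces the lift and shows $\cC(S,g)$ is surjective.

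I expect the main obstacle to be bookkeeping rather than conceptual. The delicate points are verifying that the cokernel of a split morphism is a split epimorphism in a \emph{merely} pseudo-abelian category (which rests on splitting idempotents, not on an ambient abelian structure), and confirming that the adjunction is natural in exactly the variable needed so that the section $s$ transports back to a genuine lift of $\phi$. Pinning down the precise notion of projective in \Cref{definition:projective_obj} is the remaining item to check, but the argument above uses only that the relevant admissible epimorphisms are cokernels.
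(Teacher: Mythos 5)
Your strategy is essentially the paper's: both arguments run through the natural isomorphism $\cC(S,-)\cong\cC(\one,-\o S^{*})$, the closure of $\cS$ under duals, and the fact that $-\o S^{*}$ preserves cokernels while turning every morphism into a split one. The paper packages this more efficiently: $-\o S^{*}$ sends the whole cokernel sequence to a \emph{split} cokernel sequence (the cokernel of a split morphism is, as you observe, a split projection in a pseudo-abelian category), and any additive functor --- in particular $\cC(\one,-)$ --- preserves split cokernel sequences; this gives preservation of cokernels by $\cC(S,-)$ in one stroke, with no element chasing.

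The one genuine flaw in your write-up is the claim that ``the only non-formal point is surjectivity of $\cC(S,g)$.'' For $\cC(S,-)$ to preserve the cokernel sequence $A\xrightarrow{h}B\xrightarrow{g}C\to0$ you need $\cC(S,g)$ to be the cokernel of $\cC(S,h)$, hence also the inclusion $\ker\cC(S,g)\subseteq\im\cC(S,h)$, and this is \emph{not} formal: for a general object $P$, a morphism $P\to B$ killed by $g$ need not factor through $h$ --- this is exactly where projectivity is at stake, even in an abelian category. Fortunately the machinery you already set up closes the gap with no new ideas: if $\tilde\psi\colon\one\to B\o S^{*}$ satisfies $(g\o S^{*})\tilde\psi=0$, write $h\o S^{*}=\rho=\rho\sigma\rho$; since $\operatorname{coker}(\rho)=\operatorname{coker}(\rho\sigma)$ is the split projection onto the image of the idempotent $\id-\rho\sigma$, the vanishing forces $\tilde\psi=\rho\sigma\tilde\psi$, so $\tilde\psi$ factors through $h\o S^{*}$, and transporting back through the adjunction gives the required factorization through $h$. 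With this repair your proof is correct and coincides in substance with the paper's.
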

\begin{proof}
If $S \in \cS$, then $S^{\ast} \in \cS$.
The functor $(- \otimes S^{\ast}): \cC \rightarrow \cC$ preserves cokernels and sends every morphism to a split morphism.
It follows that the functor
$$\cC(S,-) \simeq \cC( \one, - \otimes S^{\ast} )$$
sends cokernel sequences to split cokernel sequences.
\end{proof}

For the definition of weak (co)kernels, see \Cref{definition:weak_cokernel}.

\begin{lemma}\label{lemma:S_has_weak_kernels}
Let $S \in \cS$ and let $S \xrightarrow{\epsilon} \one$ be an epimorphism in $\cC$.
Then $\cS$ has weak kernels.
\end{lemma}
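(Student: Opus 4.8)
The plan is to produce, for an arbitrary morphism $f\colon S_1\to S_2$ of $\cS$, a weak kernel whose domain again lies in $\cS$, by first tensoring with the fixed splitting object $S$ (which turns $f$ into a \emph{split} morphism, hence one with an honest kernel) and then descending back to $S_1$ along $\epsilon$. First, since $S$ is a global splitting object, $S\otimes f\colon S\otimes S_1\to S\otimes S_2$ is split, say $S\otimes f=(S\otimes f)\psi(S\otimes f)$. In the idempotent-complete category $\cC$ the idempotent $e:=\psi\,(S\otimes f)$ splits, and a short check shows $\ker(S\otimes f)=\ker e$; thus $\iota\colon K':=\im(1-e)\hookrightarrow S\otimes S_1$ is a genuine kernel of $S\otimes f$ in $\cC$. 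As $K'$ is a direct summand of $S\otimes S_1\in\cS$ and $\cS$ is closed under summands, we have $K'\in\cS$.

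Next, I would take $k:=(\epsilon\otimes\id_{S_1})\circ\iota\colon K'\to S_1$ as the candidate weak kernel. The bifunctoriality of $\otimes$ gives the identity $f\circ(\epsilon\otimes\id_{S_1})=(\epsilon\otimes\id_{S_2})\circ(S\otimes f)$, and since $(S\otimes f)\iota=0$ this yields $fk=0$. To verify the weak universal property, let $g\colon X\to S_1$ be a morphism of $\cS$ with $fg=0$. Tensoring with $S$ gives $(S\otimes f)(S\otimes g)=S\otimes(fg)=0$, so $S\otimes g$ factors through the kernel $\iota$ as $S\otimes g=\iota\tilde h$ for a unique $\tilde h\colon S\otimes X\to K'$. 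A second application of bifunctoriality then gives the key identity $k\tilde h=g\circ(\epsilon\otimes\id_X)$.

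The main obstacle is the final descent: the identity above only shows that $g\circ(\epsilon\otimes\id_X)$ factors through $k$, whereas I need $g$ itself to do so. Resolving this is exactly where both hypotheses are used at once. I would argue that $\epsilon\otimes\id_X$ is a \emph{split epimorphism}: it is an epimorphism because $-\otimes X$ has a right adjoint in the rigid category $\cC$ and $\epsilon$ is epi, and it is a split morphism because $X\in\cS$ forces $\epsilon\otimes X$ to be split; an epimorphism that is a split morphism admits a section $s$ with $(\epsilon\otimes\id_X)s=\id_X$. Setting $h:=\tilde h\,s$ then gives $kh=k\tilde h\,s=g\circ(\epsilon\otimes\id_X)\,s=g$, and $h$ is automatically a morphism of $\cS$ since $\cS$ is full. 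This completes the construction of a weak kernel of $f$ in $\cS$. Neither the epimorphism property of $\epsilon$ alone nor the splitting property of $X$ alone suffices here, so the interplay of the two in this last step is the crux of the argument.
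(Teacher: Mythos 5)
Your proof is correct and follows essentially the same route as the paper's: take the genuine kernel $\kappa\colon K\hookrightarrow S\otimes S_1$ of the split morphism $S\otimes f$ (a direct summand, hence in $\cS$), use $(\epsilon\otimes \id_{S_1})\circ\kappa$ as the weak kernel, and descend a test morphism $g$ by composing the induced map into $K$ with a section of the split epimorphism $\epsilon\otimes\id_X$. The only differences are notational and the extra detail you supply on realizing the kernel as the image of a complementary idempotent, which the paper leaves implicit.
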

\begin{proof}
Let $\alpha: A \rightarrow B$ be a morphism in $\cS$.
Then $S \otimes \alpha: S \otimes A \rightarrow S \otimes B$ splits, and hence has a kernel $K$ which lies in $\cS$ as it is a direct summand of $S \otimes A$.
Let $\kappa: K \hookrightarrow S \otimes A$ denote the kernel embedding.
We claim that $K$ together with the morphism
$$
\kappa' := \left( K \xrightarrow{\kappa} S \otimes A \xrightarrow{\epsilon \otimes A} A \right)
$$
is a weak kernel of $\alpha$.
For this, we first compute
\begin{align*}
    \alpha \circ \kappa' = \alpha \circ (\epsilon \otimes A) \circ \kappa = (\epsilon \otimes B) \circ (S \otimes \alpha )\circ \kappa = 0.
\end{align*}
Next, let $T \xrightarrow{\tau} A$ be a morphism in $\cS$ such that $\alpha \circ \tau = 0$.
Since $T$ is a global splitting object, $\epsilon \otimes T$ is a split epi, and we denote by $\sigma: T \rightarrow S \otimes T$ a corresponding section.
We compute
\begin{equation}\label{equation:splitway}
    (\epsilon \otimes A) \circ (S \otimes \tau) \circ \sigma = \tau \circ (\epsilon \otimes T) \circ \sigma = \tau \circ \id = \tau.
\end{equation}
Last, let $\nu: S \otimes T \rightarrow K$ be the morphism induced by the universal property of $K$ as a kernel of $P \otimes \alpha$. We compute
$$
\kappa' \circ (\nu \circ \sigma) = 
(\epsilon \otimes A) \circ \kappa \circ \nu \circ \sigma = (\epsilon \otimes A) \circ (S \otimes \tau) \circ \sigma = \tau
$$
where the last equation is \Cref{equation:splitway}. Hence, $\tau$ factors via $\kappa'$. 
This gives the claim.
\end{proof}

We refer to the category of finitely presented functors (see \Cref{appendix:fp}) over $\cS$ by $\rmod{\cS}$.

\begin{corollary}\label{corollary:modS_abelian}
    If we have an epimorphism $S \xrightarrow{\epsilon} \one$ in $\cC$ with $S \in \cS$, then $\rmod{\cS}$ is abelian.
\end{corollary}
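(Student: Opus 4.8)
The plan is to reduce the statement to the standard homological criterion characterizing when a category of finitely presented functors is abelian. Recall from \Cref{appendix:fp} that $\rmod{\cS}$ denotes the category of finitely presented functors over the additive category $\cS$. The key general fact (recalled in \Cref{appendix:fp}, classically due to Auslander) is that for an essentially small additive category $\cS$, the category $\rmod{\cS}$ always has cokernels, and it has kernels---hence is abelian---precisely when $\cS$ admits weak kernels in the sense of \Cref{definition:weak_cokernel}. I would therefore structure the proof as a single invocation of this criterion, once the weak-kernel hypothesis has been verified.

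First I would observe that the hypothesis of the corollary is exactly the hypothesis of \Cref{lemma:S_has_weak_kernels}: we are handed an epimorphism $S \xrightarrow{\epsilon} \one$ in $\cC$ with $S \in \cS$. That lemma then furnishes weak kernels in $\cS$; concretely, the weak kernel of a morphism $\alpha$ in $\cS$ is built from the genuine kernel of $S \otimes \alpha$, which exists because $S$ is a global splitting object and hence splits $\alpha$, together with the chosen epimorphism $\epsilon$. I would also record that $\cS$ meets the standing hypotheses of the criterion: since $\cC$ is essentially small and $\cS$ is a pseudo-abelian full subcategory of $\cC$ (as noted before \Cref{thm:ab-env-projectives}), the category $\cS$ is an essentially small additive category, so the general criterion applies verbatim.

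Combining the two, $\cS$ has weak kernels, and therefore $\rmod{\cS}$ is abelian, as claimed. I do not expect any genuine obstacle here: the substantive work---the explicit construction of weak kernels from the splitting property of $S$ and the section of $\epsilon \otimes T$---is already packaged in \Cref{lemma:S_has_weak_kernels}. The only points requiring care are citing the correct form of the finitely-presented-functor criterion from \Cref{appendix:fp} and confirming that its smallness and additivity hypotheses are satisfied by $\cS$; beyond that bookkeeping, the corollary is an immediate consequence of the preceding lemma.
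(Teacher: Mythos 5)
Your proof is correct and is exactly the paper's argument: the corollary follows by combining \Cref{lemma:S_has_weak_kernels} (which supplies weak kernels in $\cS$ from the epimorphism $\epsilon$) with Freyd's criterion \Cref{theorem:Freyd_main}. The extra bookkeeping you mention about additivity and smallness of $\cS$ is fine but not something the paper dwells on.
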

\begin{proof}
    This follows from \Cref{lemma:S_has_weak_kernels} and \Cref{theorem:Freyd_main}.
\end{proof}

From now on, we assume that we have a presentation $T \rightarrow S \rightarrow \one$ of the tensor unit in $\cC$ by global splitting objects $S,T \in \cS$.
In particular, $\rmod{\cS}$ is abelian under this assumption by \Cref{corollary:modS_abelian}.

\begin{corollary}\label{corollary:C_embeds_into_modS}
    The functor
    \begin{align*}
    \cC &\rightarrow \rmod{\cS} \\
    A &\mapsto \cC(-,A)|_{\cS}
    \end{align*}
    is well-defined and fully faithful.
\end{corollary}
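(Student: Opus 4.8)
The plan is to produce, for each object $A\in\cC$, an explicit presentation of the functor $\cC(-,A)|_{\cS}$ by representable functors, and then to read off both well-definedness (finite presentability) and full faithfulness from this presentation together with the Yoneda lemma. Throughout I use that $\cS$ is full, so $\cS(-,P)=\cC(-,P)|_{\cS}$ for $P\in\cS$, and that $\rmod{\cS}$ is abelian by \Cref{corollary:modS_abelian}.

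First I would tensor the assumed presentation $T\xrightarrow{\beta}S\xrightarrow{\pi}\one\to0$ of the unit with $A$. Since $\cC$ is rigid, the functor $-\o A$ is left adjoint to $-\o A^{*}$ and hence preserves cokernels, so
$$
T\o A\xrightarrow{\beta\o A}S\o A\xrightarrow{\pi\o A}A\to0
$$
is again a cokernel sequence, and $S\o A,\,T\o A\in\cS$ because $\cS$ is closed under tensoring with arbitrary objects of $\cC$. Applying $\cC(X,-)$ for any $X\in\cS$ and invoking \Cref{lemma:splitting_is_projective}, which says precisely that such $\cC(X,-)$ carries cokernel sequences to split, hence exact, cokernel sequences, I obtain that
$$
\cS(-,T\o A)\xrightarrow{(\beta\o A)_{*}}\cS(-,S\o A)\xrightarrow{(\pi\o A)_{*}}\cC(-,A)|_{\cS}\to0
$$
is exact objectwise, and therefore exact in $\rmod{\cS}$. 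This exhibits $\cC(-,A)|_{\cS}$ as a finitely presented functor, proving well-definedness.

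For full faithfulness I would apply the left-exact contravariant functor $\Hom_{\rmod{\cS}}(-,\cC(-,B)|_{\cS})$ to this presentation. By the Yoneda lemma, $\Hom_{\rmod{\cS}}(\cS(-,P),\cC(-,B)|_{\cS})\cong\cC(P,B)$, so the resulting left-exact sequence identifies $\Hom_{\rmod{\cS}}(\cC(-,A)|_{\cS},\cC(-,B)|_{\cS})$ with the kernel of $(\beta\o A)^{*}\colon\cC(S\o A,B)\to\cC(T\o A,B)$. On the other hand, the universal property of the cokernel $A=\operatorname{coker}(\beta\o A)$ identifies $\cC(A,B)$ with the very same kernel. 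Finally I would check that the map $\cC(A,B)\to\Hom_{\rmod{\cS}}(\cC(-,A)|_{\cS},\cC(-,B)|_{\cS})$ induced by the functor is compatible with these two identifications: both send $g\colon A\to B$ to $g\circ(\pi\o A)\in\cC(S\o A,B)$, because the chosen surjection $\cS(-,S\o A)\to\cC(-,A)|_{\cS}$ corresponds under Yoneda to $\pi\o A$. This forces the induced map to be a bijection, as desired.

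The steps are essentially forced once the presentation is in hand, so the only real work is ensuring that restricting to the subcategory $\cS$ does not destroy exactness. This is exactly where the projectivity of global splitting objects (\Cref{lemma:splitting_is_projective}) enters: without it, applying $\cC(X,-)$ would only be left exact and the right-exactness of the displayed presentation could fail, so $\cC(-,A)|_\cS$ might not even land in $\rmod{\cS}$. The remaining compatibility verification in the full-faithfulness argument is then routine diagram-tracing through the Yoneda isomorphism.
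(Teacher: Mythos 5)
Your proof is correct and follows essentially the same route as the paper: tensor the presentation of $\one$ with $A$ to obtain a presentation of $A$ by objects of $\cS$, then use that these are projective (\Cref{lemma:splitting_is_projective}) so that restriction to $\cS$ preserves the exactness needed for finite presentability. The paper delegates the rest to \Cref{lemma:well-defined_restriction}, whose full-faithfulness step goes through \Cref{remark:commutative_square_calculus}; your direct left-exactness-plus-Yoneda computation is just an unpacking of that same standard argument.
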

\begin{proof}
Let $A \in \cC$ and let $T \rightarrow S \rightarrow \one$ be a presentation by global splitting objects. Then $T \otimes A \rightarrow S \otimes A \rightarrow A$ is a presentation of $A$ by global splitting objects.
By \Cref{lemma:splitting_is_projective}, $\cS \subseteq \cC$ is a full additive subcategory which consists of projective objects.
Now, the claim follows from \Cref{lemma:well-defined_restriction}.
\end{proof}

We get a monoidal structure on $\rmod{\cS}$ with tensor unit given by $\cC(-,\one)|_{\cS}$, and with a tensor product which is determined (up to natural isomorphism) by the following two properties:
\begin{enumerate}
    \item $\cS(-,S) \otimes \cS(-,T) \simeq \cS(-,S \otimes T)$ for all $S,T \in \cS$,
    \item $(M \otimes -)$ and $(- \otimes M)$ are right exact for all $M \in \rmod{\cS}$.
\end{enumerate}
The construction of this tensor product is explicitly given in \cite{BiesPos}.
Moreover, it follows from these two defining properties that the embedding of \Cref{corollary:C_embeds_into_modS}
$$
\cC \rightarrow \rmod{\cS}
$$
can be equipped with the structure of a strong monoidal functor.
In this sense, we view $\cC$ as a monoidal subcategory of $\rmod{\cS}$.

\begin{lemma}\label{lemma:tensoring_with_M}
Let $M \in \rmod{\cS}$ and let $S \in \cS$. Then $S \otimes M$ and $M \otimes S$ both lie in $\cS$.
\end{lemma}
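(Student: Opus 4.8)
The plan is to reduce the statement to the splitting property of $S$ via a presentation of $M$ by representable functors. Since $M\in\rmod{\cS}$ is finitely presented, I would first choose a presentation
$$\cS(-,A)\xrightarrow{\cS(-,\varphi)}\cS(-,B)\to M\to 0$$
with $A,B\in\cS$ and $\varphi\colon A\to B$ a morphism of $\cS$ (using that $\cS$ has finite direct sums, a single representable suffices in each slot, and such maps correspond to morphisms of $\cS$ by Yoneda). Applying the right-exact functor $S\otimes-$ (property (2) of the tensor product on $\rmod{\cS}$) and identifying $S\otimes\cS(-,A)\cong\cS(-,S\otimes A)$ and $S\otimes\cS(-,B)\cong\cS(-,S\otimes B)$ via property (1) — with the naturality of that isomorphism identifying the induced map with $\cS(-,S\otimes\varphi)$ — I obtain an exact sequence
$$\cS(-,S\otimes A)\xrightarrow{\cS(-,S\otimes\varphi)}\cS(-,S\otimes B)\to S\otimes M\to 0,$$
in which $S\otimes A$ and $S\otimes B$ again lie in $\cS$ by the closure properties of $\cS$ recorded above.

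The key step is then to compute this cokernel inside $\cC$. Because $S$ is a \emph{global} splitting object, the morphism $S\otimes\varphi$ is split in the sense of \Cref{def:splitting}; hence it admits a cokernel $C:=\operatorname{coker}_{\cC}(S\otimes\varphi)$ which is a direct summand of $S\otimes B$. As $\cS$ is pseudo-abelian and closed under direct summands and $S\otimes B\in\cS$, we get $C\in\cS$. Moreover, being the cokernel of a split morphism, the sequence $S\otimes A\to S\otimes B\to C\to 0$ is a split cokernel sequence, compatibly exhibiting $S\otimes B\cong\operatorname{im}(S\otimes\varphi)\oplus C$.

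To finish, I would invoke that split cokernel sequences are absolute: they are preserved by every additive functor, in particular by the (additive) Yoneda embedding $\cS\hookrightarrow\rmod{\cS}$, $A\mapsto\cS(-,A)$ of \Cref{corollary:C_embeds_into_modS}. Therefore $\cS(-,C)$ is a cokernel of $\cS(-,S\otimes\varphi)$ in $\rmod{\cS}$, and comparison with the sequence above yields $S\otimes M\cong\cS(-,C)$, an object in the image of $\cS$. The statement for $M\otimes S$ follows by the same argument, now using that $S$ is a right splitting object and the right-exactness of $-\otimes S$. I expect the main obstacle to be the bookkeeping in the first paragraph: verifying that the isomorphisms of property (1) are natural enough to identify the map induced by $S\otimes-$ on the presentation with $\cS(-,S\otimes\varphi)$, and checking that ``split'' at the level of $\cC$ transports correctly to a genuine cokernel computation in the abelian category $\rmod{\cS}$ (where cokernels are computed pointwise). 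Everything else is a formal consequence of $S$ being a global splitting object together with the closure properties of $\cS$.
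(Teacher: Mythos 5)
Your proof is correct and follows essentially the same route as the paper's: take a presentation of $M$ by objects of $\cS$, tensor with $S$ using right-exactness of $S\otimes-$, observe that the induced map splits because $S$ is a global splitting object, and conclude that the cokernel is a direct summand of an object of $\cS$. The paper's version is simply terser, asserting directly that $S\otimes M$ is a direct summand of $S\otimes U\in\cS$ without spelling out the naturality bookkeeping or the Yoneda-transport of the split cokernel that you carefully record.
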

\begin{proof}
Let $V \xrightarrow{u} U \rightarrow M$ be a presentation of $M$ in $\rmod{\cS}$ with $U,V \in \cS$.
Then 
$S \otimes V \xrightarrow{S \otimes u} S \otimes U \rightarrow S\otimes M \rightarrow 0$
is exact. Since $S \otimes u$ splits, it follows that $S\otimes M$ is a direct summand of $S \otimes U \in \cS$ and thus it lies itself in $\cS$.
The other case follows similarly.
\end{proof}

\begin{corollary}\label{corollary:to_split_exact_sequences}
    Tensoring with $S \in \cS$ sends short exact sequences in $\rmod{\cS}$ to split short exact sequences.
\end{corollary}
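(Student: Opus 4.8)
The plan is to show that, for a short exact sequence $0\to M'\to M\to M''\to 0$ in $\rmod{\cS}$ (which is abelian by \Cref{corollary:modS_abelian}), applying $S\otimes-$ yields a short exact sequence whose right-hand term is projective, so that it automatically splits. Thus there are two things to establish: that $S\otimes-$ is \emph{exact}, and that $S\otimes M''$ is projective in $\rmod{\cS}$.

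For exactness, the tensor product on $\rmod{\cS}$ is right exact in each variable by construction, so only left exactness needs to be added, and for this I would exploit the rigidity of $S$. As an object of the pseudo-tensor category $\cC$, $S$ is dualizable; since the embedding $\cC\to\rmod{\cS}$ is strong monoidal, it sends the evaluation and coevaluation of $S$ (and of its duals) to morphisms of $\rmod{\cS}$ still satisfying the zig-zag identities, so $S$ remains dualizable in $\rmod{\cS}$. Tensoring with a two-sided dualizable object has both a left and a right adjoint, namely tensoring with its left and right duals; these adjunctions are purely formal consequences of the duality data and therefore hold internally in $\rmod{\cS}$. A functor with both adjoints preserves all limits and colimits, hence $S\otimes-$ is exact, and $0\to S\otimes M'\to S\otimes M\to S\otimes M''\to 0$ is short exact.

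For the splitting, I would invoke \Cref{lemma:tensoring_with_M} to see that $S\otimes M''$ lies in $\cS$. Under the embedding $\cC\to\rmod{\cS}$, every object of $\cS$ becomes a representable functor, and representable functors are projective in $\rmod{\cS}$, since evaluation at the representing object is exact by the Yoneda lemma (as recalled in \Cref{appendix:fp}). Consequently the epimorphism $S\otimes M\to S\otimes M''$ admits a section, and a short exact sequence whose surjection is split is itself split.

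The step I expect to be the crux is the left exactness of $S\otimes-$, i.e.\ the injectivity of $S\otimes M'\to S\otimes M$. The defining properties of the tensor product on $\rmod{\cS}$ only provide right exactness, and right exactness together with projectivity of $S\otimes M''$ is \emph{not} by itself enough to conclude: it yields a section of $S\otimes M\to S\otimes M''$ and an epimorphism from $S\otimes M'$ onto the kernel of that map, but not that this epimorphism is an isomorphism. Rigidity of $S$ is precisely what closes this gap, so the delicate point is to be sure the duality of $S$ genuinely transports to honest adjunctions on the abelian category $\rmod{\cS}$, not merely on the subcategory $\cC$.
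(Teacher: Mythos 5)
Your proof is correct and follows essentially the same route as the paper: exactness of $S\otimes-$ from dualizability of $S$ (which indeed transports along the strong monoidal embedding $\cC\to\rmod{\cS}$, exactly as you argue), and then \Cref{lemma:tensoring_with_M} to place the terms of the resulting sequence in $\cS$, hence among the projectives of $\rmod{\cS}$, forcing the sequence to split. The only cosmetic difference is that the paper observes all three terms are projective while you use only projectivity of the cokernel term; both suffice.
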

\begin{proof}
As $S$ is dualizable, tensoring with $S$ is exact.
By \Cref{lemma:tensoring_with_M}, tensoring with $S$ sends a short exact sequence to a short exact sequence consisting of projectives which therefore is split.
\end{proof}

\begin{corollary}\label{corollary:injectives_in_modS}
Every object $S \in \cS$ is injective in $\rmod{S}$.
\end{corollary}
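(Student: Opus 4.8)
The plan is to adapt verbatim the argument of \Cref{ex:semisimple-splitting}(c), which showed that a splitting object in a tensor category is injective, to the category $\rmod{\cS}$. Concretely, I would prove that $S$ is injective by showing that the contravariant functor $\Hom_{\rmod{\cS}}(-,S)$ is exact, and I would obtain this exactness by factoring it as a functor that carries short exact sequences to \emph{split} short exact sequences, followed by an arbitrary additive functor (which automatically preserves split exactness).

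First I would record that $S^*\in\cS$, since $\cS$ is closed under duals, and that $S$ remains a dualizable object of $\rmod{\cS}$ with dual $S^*$: this is because the embedding $\cC\hookrightarrow\rmod{\cS}$ of \Cref{corollary:C_embeds_into_modS} is strong monoidal, hence preserves the duality data of $S$. Dualizability then yields the adjunction $S^*\o-\dashv S\o-$ in $\rmod{\cS}$, and specializing the resulting natural isomorphism $\Hom_{\rmod{\cS}}(S^*\o M,N)\cong\Hom_{\rmod{\cS}}(M,S\o N)$ to $N=\one$ gives a natural isomorphism $\Hom_{\rmod{\cS}}(M,S)\cong\Hom_{\rmod{\cS}}(S^*\o M,\one)$ in the variable $M$.

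It then remains to combine two facts. By \Cref{corollary:to_split_exact_sequences} (applicable since $S^*\in\cS$, using also \Cref{lemma:tensoring_with_M}), the functor $S^*\o-$ sends every short exact sequence in $\rmod{\cS}$ to a split short exact sequence; and any additive functor, in particular $\Hom_{\rmod{\cS}}(-,\one)$, preserves split short exact sequences. Hence the composite $\Hom_{\rmod{\cS}}(S^*\o-,\one)\cong\Hom_{\rmod{\cS}}(-,S)$ takes short exact sequences to split, hence exact, sequences, so $\Hom_{\rmod{\cS}}(-,S)$ is exact and $S$ is injective. The only step requiring genuine care is the second paragraph, namely verifying that $S$ is dualizable in $\rmod{\cS}$ with the expected dual and that the duality adjunction produces the stated natural isomorphism; but this is formal once the strong monoidality of the embedding is in hand, and the remainder is a direct transcription of \Cref{ex:semisimple-splitting}(c).
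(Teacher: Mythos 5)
Your proof is correct and is essentially the paper's own argument: the paper likewise identifies $\rmod{\cS}(-,S)$ with $\rmod{\cS}(-\o{}^*S,\one)$ via duality and concludes from \Cref{corollary:to_split_exact_sequences} that this functor is exact. The only (immaterial) differences are that you tensor on the left with $S^*$ rather than on the right with ${}^*S$, and that you spell out the dualizability of $S$ in $\rmod{\cS}$, which the paper leaves implicit.
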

\begin{proof}
Let $S \in \cS$. We have a natural isomorphism
$$\rmod{\cS}(-,S) \simeq \rmod{\cS}( - \otimes {}^*S,\one  ).$$
Now, the claim follows from \Cref{corollary:to_split_exact_sequences}.
\end{proof}

\begin{corollary}\label{corollary:equiv_weak_kernel_weak_cokernel}
A sequence $A \xrightarrow{\alpha} B \xrightarrow{\beta} C$ in $\cS$
is a weak kernel sequence if and only if it is a weak cokernel sequence.
\end{corollary}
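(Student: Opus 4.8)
The plan is to use the abelian category $\rmod{\cS}$ (abelian by \Cref{corollary:modS_abelian}) as a common bridge, exploiting that every object of $\cS$ is \emph{simultaneously projective and injective} once viewed in $\rmod{\cS}$. Write $Y\:\cS\to\rmod{\cS}$, $S\mapsto\cS(-,S)$, for the fully faithful embedding of \Cref{corollary:C_embeds_into_modS}. I would first record the two elementary hom-reformulations: with $\beta\alpha=0$, the sequence $A\xrightarrow{\alpha}B\xrightarrow{\beta}C$ is a \emph{weak kernel} sequence exactly when $\cS(T,A)\to\cS(T,B)\to\cS(T,C)$ is exact in the middle for every $T\in\cS$, and a \emph{weak cokernel} sequence exactly when $\cS(C,T)\to\cS(B,T)\to\cS(A,T)$ is exact in the middle for every $T\in\cS$. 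The two homological inputs are: the $Y(T)=\cS(-,T)$ are projective (by Yoneda) and generate $\rmod{\cS}$, since every finitely presented functor is a quotient of a representable (see \Cref{appendix:fp}); and every $T\in\cS$ is injective in $\rmod{\cS}$ by \Cref{corollary:injectives_in_modS}. Hence both $\Hom_{\rmod{\cS}}(Y(T),-)$ and the contravariant $\Hom_{\rmod{\cS}}(-,Y(T))$ are exact, restricting on objects of $\cS$ to $\cS(T,-)$ and $\cS(-,T)$ respectively.

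The key claim I would isolate is: \emph{for a sequence in $\cS$ with $\beta\alpha=0$, being a weak kernel sequence is equivalent to $Y(A)\xrightarrow{Y(\alpha)}Y(B)\xrightarrow{Y(\beta)}Y(C)$ being exact in the middle in $\rmod{\cS}$, and this already forces it to be a weak cokernel sequence.} The direction ``exact $\Rightarrow$ weak (co)kernel'' is immediate: applying the exact functor $\Hom_{\rmod{\cS}}(Y(T),-)$ (resp.\ the exact contravariant $\Hom_{\rmod{\cS}}(-,Y(T))$) preserves exactness in the middle, yielding precisely the weak kernel (resp.\ weak cokernel) condition at $T$. For ``weak kernel $\Rightarrow$ exact'' I would run the standard argument that a generating family of projectives detects exactness: setting $I=\Img Y(\alpha)\subseteq K=\ker Y(\beta)$, the weak kernel hypothesis says every $Y(T)\to B$ killed by $Y(\beta)$ factors through $Y(\alpha)$, so $\Hom_{\rmod{\cS}}(Y(T),I)\to\Hom_{\rmod{\cS}}(Y(T),K)$ is surjective for all $T$; were $K/I\neq0$, some generator $Y(T)$ would map nonzero to $K/I$, and lift by projectivity along $K\twoheadrightarrow K/I$ to a map not factoring through $I$, a contradiction. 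Thus $I=K$. Combining the halves proves the claim, and in particular \emph{weak kernel $\Rightarrow$ weak cokernel}.

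It remains to obtain the converse, which I would deduce from rigidity. Since $\cC$ is rigid and $\cS$ is closed under duals, $f\mapsto f^*$ gives natural isomorphisms $\cS(X,Y)\cong\cS(Y^*,X^*)$ and $(-)^*$ is essentially surjective on $\cS$; transporting the two hom-criteria through these isomorphisms yields the duality relations
\begin{gather*}
(A\to B\to C \text{ is a weak cokernel sequence}) \Longleftrightarrow (C^*\to B^*\to A^* \text{ is a weak kernel sequence}), \\
(A\to B\to C \text{ is a weak kernel sequence}) \Longleftrightarrow (C^*\to B^*\to A^* \text{ is a weak cokernel sequence}).
\end{gather*}
Now, given a weak cokernel sequence $A\to B\to C$, the first relation makes $C^*\to B^*\to A^*$ a weak kernel sequence; the implication already proved turns it into a weak cokernel sequence; and the second relation then reads off that $A\to B\to C$ is a weak kernel sequence. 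This closes the loop and gives the equivalence.

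The main obstacle, as I see it, is the asymmetry between the two homological inputs: projectivity of $\cS$ comes packaged with a \emph{generating} family, which is what powers the strong ``weak kernel $\Rightarrow$ exact'' direction, whereas I would rather not assume that the injectives $\cS$ \emph{cogenerate} $\rmod{\cS}$, so I cannot symmetrically conclude ``weak cokernel $\Rightarrow$ exact'' directly. The role of rigidity is exactly to supply this missing direction for free, by converting a weak cokernel statement into a weak kernel statement about the dualized sequence, where the generating projectives are available again.
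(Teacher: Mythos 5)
Your proof is correct and follows essentially the same route as the paper: reduce to one implication via closure of $\cS$ under duals, identify a weak kernel sequence with an exact sequence in $\rmod{\cS}$, and apply the injectivity of objects of $\cS$ in $\rmod{\cS}$ (\Cref{corollary:injectives_in_modS}) to obtain the weak cokernel property. The only difference is that you spell out a (correct) generators-and-projectivity argument for why the weak kernel condition forces exactness in $\rmod{\cS}$, which the paper treats as immediate since (co)kernels there are computed pointwise.
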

\begin{proof}
Since $\cS$ is closed under taking duals, it suffices to show one direction.
So, let $\alpha$ be a weak kernel of $\beta$.
Then the sequence 
$A \xrightarrow{\alpha} B \xrightarrow{\beta} C$
is exact in $\rmod{\cS}$.
Let $S \in \cS$. Since $S$ is injective in $\rmod{S}$ by \Cref{corollary:injectives_in_modS}, the sequence
$$\rmod{\cS}(A,S) \xleftarrow{\rmod{\cS}(\alpha,S)} \rmod{\cS}(B,S) \xleftarrow{\rmod{\cS}(\beta,S)} \rmod{\cS}(C,S)$$
is exact and hence $\beta$ is a weak cokernel of $\alpha$.
\end{proof}

\begin{remark}
For every $M \in \rmod{\cS}$, the functor $(- \otimes M): \rmod{\cS} \rightarrow \rmod{\cS}$ has a right adjoint $[M,-]$ defined by
$$
[M,-] := \ker\big( ({}^*S \otimes -) \xrightarrow{({}^*\rho \otimes -)} ({}^*T \otimes -) \big)
$$
where $T \xrightarrow{\rho} S \rightarrow M$ is a presentation by global splitting objects, see \cite{BiesPos}*{Theorem 3.3.8}. Similarly, $(M \otimes -)$ has a right adjoint.
We note that these right adjoints send global splitting objects to global splitting objects: indeed, let $U \in \cS$, then $({}^*S \otimes U) \xrightarrow{({}^*\rho \otimes U)} ({}^*T \otimes U)$ splits, and hence its kernel is a direct summand of $({}^*S \otimes U)$ and thus lies in $\cS$.
\end{remark}

\begin{lemma}\label{lemma:tensor_product_exact_modS}
The tensor product of $\rmod{\cS}$ is exact.
\end{lemma}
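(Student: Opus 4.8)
The plan is to show that the tensor product, which is right exact in each variable by construction (property (2) in the list preceding the statement), is in fact biexact, by proving that every object of $\rmod{\cS}$ is flat. Since $\rmod{\cS}$ is abelian by \Cref{corollary:modS_abelian} and has enough projectives --- the representable functors $\cS(-,S)=\cC(-,S)|_{\cS}$ for $S\in\cS$ are projective by the Yoneda lemma, and every finitely presented functor is a quotient of such a representable --- the left derived functors $\operatorname{Tor}_j$ of $\otimes$ are defined, and it suffices to check that $\operatorname{Tor}_j(M,X)=0$ for all $j\ge1$ and all $M,X\in\rmod{\cS}$.

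The two inputs I would exploit are that the projective generators are \emph{flat} and that the tensor unit is flat. Tensoring with any $S\in\cS$ is exact: either because $S$ is dualizable, so that $S\otimes-$ has both adjoints, or directly by \Cref{corollary:to_split_exact_sequences}. In particular the objects of $\cS$, being exactly the projectives, are flat, so the bifunctor $\otimes$ is \emph{balanced}, meaning the derived functors computed by resolving either tensor factor agree. Combining balancing with the exactness of $\one\otimes-\cong\operatorname{id}$, I would first record that $\operatorname{Tor}_j(\one,Y)=0$ for all $j\ge1$ and all $Y$.

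Now I would fix $M\in\rmod{\cS}$ and choose a projective resolution $P_\bullet\to\one$ with all $P_i\in\cS$. Since $\operatorname{Tor}_j(\one,M)=0$ for $j\ge1$, the complex $P_\bullet\otimes M$ is a resolution of $\one\otimes M\cong M$; moreover each $P_i\otimes M$ again lies in $\cS$ by \Cref{lemma:tensoring_with_M}, so $P_\bullet\otimes M\to M$ is a \emph{projective} resolution of $M$. Using this resolution together with associativity of the tensor product, for any $X$ I would compute
$$
\operatorname{Tor}_j(M,X)\cong H_j\big((P_\bullet\otimes M)\otimes X\big)\cong H_j\big(P_\bullet\otimes(M\otimes X)\big)\cong \operatorname{Tor}_j(\one,M\otimes X)=0
\qquad(j\ge1),
$$
the last vanishing being the fact recorded above applied to $Y=M\otimes X$. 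Hence every $M$ is flat and $\otimes$ is exact.

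The step I expect to be the main obstacle --- and the one to state most carefully --- is the balancing of $\operatorname{Tor}$, that is, the assertion that a projective resolution of one tensor variable, paired with the other, computes the same derived functor. This is what converts the \emph{a priori} weak input ``$\one\otimes-$ is exact'' into the vanishing of all higher $\operatorname{Tor}(\one,-)$, and it rests precisely on the flatness of the projective generators $\cS$ coming from their dualizability. Everything else is standard homological algebra in the abelian category $\rmod{\cS}$, combined with the closure of $\cS$ under tensoring recorded in \Cref{lemma:tensoring_with_M}.
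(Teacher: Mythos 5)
Your argument is correct, but it takes a genuinely different route from the paper's. The paper never invokes derived functors: it applies Freyd's criterion (\Cref{theorem:Freyd_main}), reducing exactness of $M\otimes-$ to the statement that $M\otimes-$ sends weak kernel sequences in $\cS$ to exact sequences; it then observes that the terms of the resulting sequence all lie in $\cS$ (\Cref{lemma:tensoring_with_M}), converts the problem into one about weak cokernel sequences via \Cref{corollary:equiv_weak_kernel_weak_cokernel} (which rests on the injectivity of objects of $\cS$ in $\rmod{\cS}$), and finishes with the internal hom adjunction $\cS(M\otimes A,S)\cong\cS(A,[M,S])$ together with the fact that $[M,S]$ again lies in $\cS$. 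Your Tor-balancing argument bypasses the weak-kernel/weak-cokernel duality and the internal hom entirely, at the cost of importing the balancing theorem for the left derived functors of a right-exact biadditive bifunctor --- which does apply here, since the projectives of $\rmod{\cS}$ are exactly the objects of $\cS$ ($\cS$ being idempotent complete, so summands of representables are representable) and these are flat on both sides by \Cref{corollary:to_split_exact_sequences}. The two proofs share the same essential inputs, namely the dualizability of objects of $\cS$ and the closure of $\cS$ under tensoring with arbitrary objects of $\rmod{\cS}$ (\Cref{lemma:tensoring_with_M}); yours yields the slightly stronger conclusion that every object of $\rmod{\cS}$ is flat, with all higher $\operatorname{Tor}$ vanishing, while the paper's is shorter given the weak-kernel infrastructure it has already built and avoids developing derived bifunctors in this setting.
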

\begin{proof}
We prove that the criterion given in \Cref{theorem:Freyd_main} is satisfied.
Let $A \xrightarrow{\alpha} B \xrightarrow{\beta} C$ be a weak kernel sequence in $\cS$.
Let $M \in \rmod{\cS}$.
We need to show that the sequence
\begin{equation}\label{equation:exactness_proof}
  M \otimes A \xrightarrow{M \otimes \alpha} M \otimes B \xrightarrow{M \otimes \beta} M \otimes C  
\end{equation}
is exact in $\rmod{\cS}$. Note that all morphisms of \eqref{equation:exactness_proof} actually lie in $\cS$ by \Cref{lemma:tensoring_with_M}.
Thus, we need to show that \eqref{equation:exactness_proof} is a weak kernel sequence. By \Cref{corollary:equiv_weak_kernel_weak_cokernel}, this is equivalent to \eqref{equation:exactness_proof} being a weak cokernel sequence. But this is indeed true: for all $S \in \cS$, the sequence
  $$
  \cS(M \otimes A,S) \leftarrow \cS(M \otimes B,S) \leftarrow \cS(M \otimes C,S)
  $$
  is isomorphic to the sequence
  $$
  \cS(A,[M,S]) \leftarrow \cS(B,[M,S]) \leftarrow \cS(C,[M,S])
  $$
which is exact since $A \xrightarrow{\alpha} B \xrightarrow{\beta} C$ is also a weak cokernel sequence by \Cref{corollary:equiv_weak_kernel_weak_cokernel}.
Exactness of $(- \otimes M)$ is proven similarly.
\end{proof}

\begin{lemma}\label{lemma:dualizable_criterion}
Let $\cA$ be a monoidal abelian category with an exact tensor product.
Let 
$$A \rightarrow B \rightarrow C \rightarrow 0$$
be an exact sequence such that $A$ and $B$ have left (right, resp.) duals.
Then $C$ has a left (right, resp.) dual.
\end{lemma}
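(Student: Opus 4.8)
The plan is to treat the left-dual case; the right-dual case is entirely symmetric (replace $(-)^*$ by the right dual ${}^*(-)$ and reverse the tensor factors throughout). Write the exact sequence as $A \xrightarrow{f} B \xrightarrow{g} C \to 0$, so that $g$ exhibits $C$ as the cokernel of $f$; in particular $g$ is an epimorphism and $gf=0$. Let $f^*\colon B^*\to A^*$ denote the transpose of $f$ and set $C^* := \ker(f^*)$, with kernel embedding $\iota\colon C^*\hookrightarrow B^*$. I will show that $C^*$, equipped with evaluation and coevaluation morphisms built from those of $B$, is a left dual of $C$.

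To construct the evaluation, I would first note that the composite $\ev_B\circ(\iota\otimes\id_B)\colon C^*\otimes B\to\one$ vanishes after precomposition with $\id_{C^*}\otimes f$: using the defining identity $\ev_B\circ(\id_{B^*}\otimes f)=\ev_A\circ(f^*\otimes\id_A)$ for the transpose together with $f^*\iota=0$, this composite equals $\ev_A\circ((f^*\iota)\otimes\id_A)=0$. Since the tensor product is exact, $\id_{C^*}\otimes g$ is a cokernel of $\id_{C^*}\otimes f$, so the composite factors uniquely through $\id_{C^*}\otimes g$, yielding $\ev_C\colon C^*\otimes C\to\one$ with $\ev_C\circ(\id_{C^*}\otimes g)=\ev_B\circ(\iota\otimes\id_B)$. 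Dually, the composite $(g\otimes\id_{B^*})\circ\coev_B\colon\one\to C\otimes B^*$ is annihilated by $\id_C\otimes f^*$: by $(\id_B\otimes f^*)\circ\coev_B=(f\otimes\id_{A^*})\circ\coev_A$ and $gf=0$ it equals $((gf)\otimes\id_{A^*})\circ\coev_A=0$. Exactness of the tensor product gives that $\id_C\otimes\iota$ is a kernel of $\id_C\otimes f^*$, so this morphism factors uniquely through $\id_C\otimes\iota$, yielding $\coev_C\colon\one\to C\otimes C^*$ with $(\id_C\otimes\iota)\circ\coev_C=(g\otimes\id_{B^*})\circ\coev_B$.

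It then remains to verify the two snake (zig-zag) identities for $(\ev_C,\coev_C)$. For the identity $(\id_C\otimes\ev_C)(\coev_C\otimes\id_C)=\id_C$, I would precompose with the epimorphism $g$ and, using bifunctoriality of $\otimes$ together with the two defining relations above, rewrite the left-hand side as $g$ composed with $(\id_B\otimes\ev_B)(\coev_B\otimes\id_B)$, which equals $\id_B$ by the snake identity for $B$; since $g$ is epic, this forces the desired identity. Symmetrically, for $(\ev_C\otimes\id_{C^*})(\id_{C^*}\otimes\coev_C)=\id_{C^*}$ I would postcompose with the monomorphism $\iota$, reduce to the snake identity for $B$ in the same way, and conclude since $\iota$ is monic. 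The main obstacle is purely bookkeeping: one must choose the definition $C^*=\ker(f^*)$ and organize the two factorizations so that exactness of the tensor product applies on the correct side (right-exactness for $\ev_C$, left-exactness for $\coev_C$). Once these are in place, the snake identities follow mechanically from those of $B$ by cancelling the epimorphism $g$ and the monomorphism $\iota$.
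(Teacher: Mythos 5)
Your proof is correct and is precisely the standard argument that the paper delegates to its citation of \cite{BEO}*{Proposition 2.36}: define $C^*$ as the kernel of the transpose $f^*\colon B^*\to A^*$, use right-exactness of $C^*\otimes-$ to factor the evaluation through $\id_{C^*}\otimes g$ and left-exactness of $C\otimes-$ to factor the coevaluation through $\id_C\otimes\iota$, and then verify the zigzag identities by cancelling the epimorphism $g$ and the monomorphism $\iota$. Since the paper itself gives no details beyond the reference, your writeup is a faithful, self-contained version of the same proof.
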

\begin{proof}
The proof can be taken from \cite{BEO}*{Proposition 2.36}, since that proof only makes use of the abelian structure and the exactness of the tensor product.
\end{proof}

\begin{theorem}
The category $\rmod{\cS}$ is a tensor category.
\end{theorem}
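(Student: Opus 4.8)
The plan is to verify each defining property of a tensor category (as set out in \Cref{bg:pseudo}): that $\rmod{\cS}$ is $\kk$-linear, abelian, rigid monoidal, essentially small, and has a one-dimensional endomorphism algebra of the tensor unit. Several of these have already been established in the preceding lemmas, so the proof is largely a matter of assembling them and supplying the one genuinely new ingredient, namely rigidity.

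First I would collect the structural facts already in hand. The category $\rmod{\cS}$ is $\kk$-linear by construction (finitely presented functors over a $\kk$-linear category), and it is abelian by \Cref{corollary:modS_abelian}, using the standing assumption that a presentation $T\to S\to\one$ by global splitting objects exists (equivalently \Cref{lemma:S_has_weak_kernels} supplies an epimorphism $S\to\one$). It carries a monoidal structure with tensor unit $\cC(-,\one)|_{\cS}$, as described after \Cref{corollary:C_embeds_into_modS}, and this tensor product is exact by \Cref{lemma:tensor_product_exact_modS}. Essential smallness follows from $\cC$, and hence $\cS$, being essentially small. For the endomorphism algebra of the unit, I would argue that $\cC\to\rmod{\cS}$ is fully faithful (\Cref{corollary:C_embeds_into_modS}) and monoidal, sending $\one\in\cC$ to the unit of $\rmod{\cS}$, so $\End(\one)\cong\cC(\one,\one)\cong\kk$ remains one-dimensional, since $\cC$ is a pseudo-tensor category.

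The main point is rigidity, and this is where I would concentrate the argument. Every object $M\in\rmod{\cS}$ admits, by definition of finitely presented functors, a presentation $V\to U\to M\to 0$ with $U,V\in\cS$. Now $\cS$ is closed under taking duals in $\cC$ (noted at the start of the section), and every object of $\cS$ is dualizable in $\rmod{\cS}$ because the embedding $\cC\hookrightarrow\rmod{\cS}$ is strong monoidal and preserves duals; concretely, the dual of $S\in\cS$ in $\rmod{\cS}$ is again (the image of) its dual in $\cC$. Thus both $U$ and $V$ have left and right duals. Since $\rmod{\cS}$ is abelian with exact tensor product, \Cref{lemma:dualizable_criterion} applies to the right-exact sequence $V\to U\to M\to 0$ and yields that $M$ has a left dual; applying the same lemma on the right (or using that $\cS$ is closed under both kinds of duals) yields a right dual. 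Hence every object of $\rmod{\cS}$ is rigid, so $\rmod{\cS}$ is a rigid monoidal category.

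The hardest step is the rigidity argument, and the crux there is \Cref{lemma:dualizable_criterion}: one must ensure its hypotheses are met, i.e.\ that the tensor product really is exact (\Cref{lemma:tensor_product_exact_modS}) and that the presentation of $M$ has dualizable terms. I expect no difficulty beyond correctly tracking that the duals of objects of $\cS$ computed in $\cC$ agree with those computed in $\rmod{\cS}$, which follows from the strong monoidality of the embedding. Assembling these observations, all the axioms of a tensor category are verified, completing the proof.
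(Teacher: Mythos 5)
Your proposal is correct and follows essentially the same route as the paper's proof: abelianness via \Cref{corollary:modS_abelian}, the computation $\End(\one)\cong\kk$ via the fully faithful monoidal embedding of \Cref{corollary:C_embeds_into_modS}, and rigidity by applying \Cref{lemma:dualizable_criterion} to a presentation of an arbitrary object by (dualizable) objects of $\cS$, using exactness of the tensor product from \Cref{lemma:tensor_product_exact_modS}. Your additional remarks on essential smallness and on the compatibility of duals under the strong monoidal embedding are correct fillings-in of details the paper leaves implicit.
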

\begin{proof}
It is abelian (\Cref{corollary:modS_abelian}) and we have $\End_{\rmod{\cS}}( \one ) \cong \End_{\cC}( \one ) \cong \kk$ (\Cref{corollary:C_embeds_into_modS}).
Moreover, every object in $\rmod{\cS}$ arises as the cokernel of a morphism between dualizable objects. Since the tensor product is exact (\Cref{lemma:tensor_product_exact_modS}), we can apply \Cref{lemma:dualizable_criterion} and see that $\rmod{\cS}$ is rigid.
\end{proof}

\begin{lemma}\label{lemma:aux_lemma}
Let $F: \cC \rightarrow \cT$ be a monoidal faithful functor into a tensor category $\cT$.
Then $F$ preserves cokernels.
\end{lemma}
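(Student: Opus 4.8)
The plan is to show that $F$ sends a cokernel $A\xrightarrow{f}B\xrightarrow{c}C$ in $\cC$ to a cokernel $F(A)\xrightarrow{F(f)}F(B)\xrightarrow{F(c)}F(C)$ in $\cT$. Since $\cT$ is a tensor category, $F(f)$ has a cokernel $q\colon F(B)\to Q$ in $\cT$, and from $F(c)\circ F(f)=F(cf)=0$ we obtain a unique $g\colon Q\to F(C)$ with $gq=F(c)$. It then suffices to prove that $g$ is an isomorphism, for then $F(c)$ is a cokernel of $F(f)$ together with $q$.

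The key is to tensor with a global splitting object. Fix a nonzero $S\in\cS$ (for instance the $S$ from the presentation $T\to S\to\one$, which is nonzero since it surjects onto $\one$). As $-\o S$ and $S\o -$ admit right adjoints by rigidity, they preserve cokernels, so $S\o c$ is a cokernel of $S\o f$ in $\cC$. But $S\o f$ is split because $S$ is a global splitting object; since $\cC$ is idempotent complete, a split morphism together with its cokernel is described entirely by a biproduct decomposition and idempotent splittings, which any additive functor preserves. Applying the $\kk$-linear (hence additive) functor $F$ therefore shows that $F(S)\o F(c)=F(S\o c)$ is a cokernel of $F(S)\o F(f)$ in $\cT$.

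Next I would tensor the comparison datum with $F(S)$. Because $\cT$ is rigid, $F(S)\o-$ is exact and preserves cokernels, so $F(S)\o q$ is also a cokernel of $F(S)\o F(f)$. As $(F(S)\o g)(F(S)\o q)=F(S)\o F(c)$ and both $F(S)\o q$ and $F(S)\o F(c)$ are cokernels of $F(S)\o F(f)$, the map $F(S)\o g$ is the induced comparison isomorphism between two cokernels of the same morphism. Finally, writing $K=\ker g$ and $\overline{C}=\operatorname{coker} g$ in the abelian category $\cT$ and using exactness of $F(S)\o-$ gives $F(S)\o K=\ker(F(S)\o g)=0$ and $F(S)\o\overline{C}=\operatorname{coker}(F(S)\o g)=0$. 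Since $F$ is faithful, $S\neq0$ forces $F(S)\neq0$, and tensoring with a nonzero object in a tensor category is conservative (the tensor product of nonzero objects is nonzero), so $K=0$ and $\overline{C}=0$; hence $g$ is an isomorphism.

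The main obstacle is the passage from $F(S)\o g$ being an isomorphism to $g$ being an isomorphism, i.e. the conservativity of $F(S)\o-$: this is where I use that $\cT$ is a genuine tensor category (nonzero $\o$ nonzero is nonzero) rather than merely monoidal, together with faithfulness of $F$ to guarantee $F(S)\neq0$. The other point requiring care is that the cokernel of a split morphism is preserved by every additive functor, which relies on $\cC$ being idempotent complete so that the splitting is realized by an honest biproduct decomposition.
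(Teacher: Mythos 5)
Your proof is correct and follows essentially the same route as the paper's: tensor the cokernel sequence with a nonzero global splitting object $S$, observe that the resulting split cokernel sequence is preserved by the additive functor $F$, and then use that $F(S)\otimes -$ is faithful/conservative in the tensor category $\cT$ to descend back to the original sequence. The only difference is presentational — you unpack the "reflects exactness" step via an explicit comparison map $g$ and its kernel and cokernel, where the paper simply invokes that $F(S)\otimes-$ is a faithful exact functor.
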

\begin{proof}
Let $A \rightarrow B \rightarrow C$ be a cokernel sequence in $\cC$.
Let $S \in \cS$.
Then $S \otimes A \rightarrow S \otimes B \rightarrow S \otimes C$ is a cokernel sequence and split in $\cS$.
Hence, $F(S \otimes A) \rightarrow F(S \otimes B) \rightarrow F(S \otimes C)$ and consequently $F(S) \otimes F(A) \rightarrow F(S) \otimes F(B) \rightarrow F(S) \otimes F(C)$ are cokernel sequences. Since $F$ is faithful, $F(S) \not\simeq 0$.
As $\cT$ is a tensor category, $F(S) \otimes -$ is a faithful exact functor (see, e.g., \cite{CEOP}*{Lemma 1.2.5.}), and hence reflects exactness.
It follows that $F(A) \rightarrow F(B) \rightarrow F(C)$ is a cokernel sequence.
\end{proof}

We can now prove the main theorem of this section.

\begin{proof}[Proof of \Cref{thm:ab-env-projectives}]

(b) $\Rightarrow$ (a): Assume that the tensor unit of $\cC$ has a presentation by global splitting objects.
Let $\cT$ be a tensor category over $\kk$.
We need to prove that restriction along $\cC \rightarrow \rmod{S}$ induces an equivalence of categories
$$
\Ten^{\text{exact}}(\rmod{\cS},\cT)\to\Ten^{\text{faith}}(\cC,\cT).
$$
The functor is fully faithful since the natural transformations are all uniquely determined by their components at objects in $\cS$.
We show essential surjectivity:
Let $F: \cC \rightarrow \cT$ be a monoidal faithful functor.
We can restrict this functor along $\cS \rightarrow \cC$ and then use the universal property of $\rmod{S}$ in order to obtain a right exact functor $F': \rmod{S} \rightarrow \cT$. It is easy to see that the monoidal structure of $F$ lifts to a monoidal structure of $F'$.
Since $F'$ is monoidal and right exact, it is also left exact and hence exact.
Last, the restriction of $F'$ to $\cC$ gives back $F$ by \Cref{lemma:aux_lemma}.

(a) $\Rightarrow$ (c): Assume that $\cC$ has a monoidal abelian envelope $\cT$ with enough projectives. Then, in particular, $\cC$ embeds fully faithfully into a tensor category, so $\U(\cC)=\Uex(\cC)$ by \cite{CEOP}*{Cor.~2.3.3(2)}.
Let $P$ be a non-zero projective object in $\cT$. Since $\cT$ is a tensor category, projective objects and injective objects coincide.
By \cite{CEOP}*{Lemma~2.2.4}, any object of $\cT$ is a subquotient of an object from $\cC$, in particular, any projective object in $\cT$ is a direct summand of an object in $\cC$ and therefore already lies in $\cC$.
It follows that $P$ already lies in $\cC$.
Lastly, any projective object in $\cT$ is a global splitting object in $\cC$ by part (b) of \Cref{ex:semisimple-splitting}. Thus, $\cC$ has a non-zero global splitting object $P$.

(c) $\Rightarrow$ (b): Let $S$ be a non-zero global splitting object, then
$$
S\o S^*\o S\o S^*\xrightarrow{\ev\o\id-\id\o\ev} S\o S^* \xrightarrow{\ev}\one \to 0
$$
is exact due to the assumption $\U(\cC)=\Uex(\cC)$, so there is a presentation of $\one$ by global splitting objects.
\end{proof}

\section{Applications}

\subsection{Categories formed by partition diagrams} Combining some of the above results, we obtain the following criteria for the existence of abelian envelopes for certain subcategories of $\cS_t=\RepSt$, for $t\in \Bbbk$. 

\begin{theorem} \label{thm:partition-cats} Let $\cC$ be any pseudo-tensor subcategory of $\cS_t=\RepSt$ containing the object $[1]$ such that the hom-spaces $\cC([m],[n])$ are spanned by $P_{m,n}\cap\cC([m],[n])$, for all $m,n\ge0$. Assume
\begin{enumerate}
    \item[(a)] the functor $\cS_t(-,\one)|_\cC$ is representable, or
\item [(b)] all the functors $\cS_t(-,\one)|_{\cC_i}$ are representable, for $i\ge0$, where $\cC_i:=\langle [0],[1],\dots,[i]\rangle\subset\cC$ is the pseudo-abelian subcategory generated by the objects $[0],[1],\dots,[i]$.
\end{enumerate}
Then the natural faithful functor $\cC\to\cS_t$ (case (a)) or its cocontinuous extension to the ind-completions (case (b)) has a right adjoint, and if $\kk$ is of characteristic $0$, then $\cC$ has an abelian envelope with the quotient property. 

Moreover, in case (a), the abelian envelope has enough projectives and is realized as $\rmod\cS$ from \Cref{appendix:fp}, where $\cS$ is full subcategory of splitting objects in $\cC$.
\end{theorem}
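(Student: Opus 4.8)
The plan is to realize $\cC$ as an $\oplus$-pseudo-diagrammatic pseudo-tensor category for which the inclusion $F\:\cC\to\cS_t$ (or its ind-extension) plays the role of the monoidal functor from the machinery of \Cref{sec:mon-adj}. First I would observe that, by \Cref{prop:diag-for-cob} (applied to the cobordism description of $\cS_t$ from \Cref{bg:RepSt}), any subcategory $\cC$ of $\cS_t$ whose hom-spaces are spanned by partition diagrams is $\oplus$-pseudo-diagrammatic; in particular $\U(\cC)=\Uex(\cC)$ by \Cref{prop:U-equals-Uex}, which secures the necessary exactness hypothesis throughout. The inclusion $F$ is a faithful linear monoidal functor into $\cS_t$, which over a field of characteristic $0$ is (after semisimplification, or directly when $t\notin\NN_0$) a semisimple tensor category; this is the role of the "semisimple target" in \Cref{thm:B}.

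Next I would produce the adjoint functor from the representability hypothesis. In case (a), the hypothesis that $\cS_t(-,\one)|_\cC$ is representable is exactly condition (1) of \Cref{corollary:adjoints_rigid_case}: taking $\cD=\cS_t$, $Z\in\cC$ the representing object, using that $F$ is dominant (since $[1]$ generates and its image tensor-generates $\cS_t$) and that $\cC$ is idempotent complete, I conclude $F$ has a right adjoint $R$ with $R(\one)\cong Z$. In case (b), the functors $\cS_t(-,\one)|_{\cC_i}$ are representable on the exhausting chain $\cC=\bigcup_i\cC_i$, so \Cref{lem:adj-filtered} furnishes a right adjoint to $\ti F=\Ind(F)$ sending $\one\mapsto\colim_i X_i$; here $\cC$ being braided (indeed symmetric, as a subcategory of $\cS_t$) is what lets us use the ind-completion branch.

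With the adjoint in hand, the existence of the abelian envelope follows from the transfer of splitting objects. Since $\cS_t$ is semisimple in characteristic $0$, every morphism in $\cS_t$ is split (\Cref{ex:semisimple-splitting}(a)), so $\one\in\cS_t$ is a splitting object for every morphism, and \Cref{cor:transport-splitting} shows $R(\one)$ (resp. the ind-version) is a non-zero left and right splitting object for every morphism of $\cC$, in $\cC$ or in $\Ind(\cC)$ respectively. Invoking \Cref{cor:ab-env-from-functor-semisimple} (case (a), using assumption (d)) or its braided ind-version (case (b), using assumption (d')), we obtain an abelian envelope with the quotient property. For the final "moreover", in case (a) the object $R(\one)$ is a genuine global splitting object in $\cC$, so \Cref{thm:ab-env-projectives} applies and identifies the abelian envelope with $\rmod\cS$ for $\cS$ the subcategory of global splitting objects. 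The main obstacle I anticipate is verifying that the hypotheses are packaged to match \Cref{corollary:adjoints_rigid_case} and \Cref{lem:adj-filtered} precisely --- in particular checking dominance of $F$ and that the subcategories $\cC_i$ are an exhausting chain of the required form --- rather than any deep new argument, since all the hard analytic work has already been done in \Cref{sec:mon-adj} and \Cref{thm:ab-env-projectives}.
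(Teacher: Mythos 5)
Your overall strategy matches the paper's: establish that $\cC$ is $\oplus$-pseudo-diagrammatic via \Cref{prop:diag-for-cob}, obtain the (ind-)adjoint from \Cref{corollary:adjoints_rigid_case} in case (a) and \Cref{lem:adj-filtered} in case (b), and then transfer splitting objects along the adjunction to invoke \Cref{cor:ab-env-from-functor-semisimple} and \Cref{cor:criterion-enough-proj}. However, there is a genuine gap in the final step: you assert that ``$\cS_t$ is semisimple in characteristic $0$,'' and your splitting-object argument rests entirely on this. That claim is false for $t\in\NN_0$: Deligne's category $\cS_t=\RepSt$ is semisimple if and only if $t\notin\NN_0$, and for $t\in\NN_0$ it is not even abelian, so it cannot serve as the target category in \Cref{cor:ab-env-from-functor-semisimple} (which requires a tensor category with enough projectives), nor is $\one\in\cS_t$ a splitting object for all morphisms there. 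Your parenthetical ``after semisimplification'' does not repair this, both because you never use it in the actual argument and because one would still have to produce an adjoint for the semisimplification functor.

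The paper closes exactly this gap by inserting one more functor: for $t\in\NN_0$ it composes the inclusion $\iota\:\cC\to\cS_t$ with the restriction functor $\cS_t\to\cS_{-1}\boxtimes\Rep(S_{t+1})$, whose target is a semisimple tensor category (here characteristic $0$ is used both for semisimplicity of $\Rep(S_{t+1})$ and for the existence of the right adjoint of this restriction functor). Since a composite of functors with right adjoints has a right adjoint, the composite $F\iota$ is a dominant linear monoidal functor to a semisimple tensor category admitting a right adjoint (respectively, an ind-right adjoint in case (b)), and only then does \Cref{cor:ab-env-from-functor-semisimple} apply. For $t\notin\NN_0$ your argument is fine with $F=\id$. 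So the missing idea is precisely this detour through $\cS_{-1}\boxtimes\Rep(S_{t+1})$ in the non-semisimple case; without it the proof does not cover integral $t$.
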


\begin{proof} Let $\iota$ be the natural faithful functor $\cC\to\cS_t$. It is a dominant linear monoidal functor by our assumptions. $\cC$ is pseudo-diagrammatic by \Cref{prop:diag-for-cob}.

In case (a), $\iota$ has a right adjoint by \Cref{corollary:adjoints_rigid_case}. Now $\cS_t$ admits a dominant linear monoidal functor $F$ to a semisimple tensor category such that $F$ admits a right adjoint. Namely, if $t\not\in\NN_0$, $F$ can be taken to be the identity, and if $t\in\NN_0$, then $F$ can be taken to be the restriction
$$
\cS_t \to \cS_{-1} \boxtimes \Rep(S_{t+1}) ,
$$
as discussed in \cite{complex-rank-1}*{Section~2.3}, \cite{HK}*{Corollary/Definition~1.4.5}, or \cite{FHL}*{Section~2.6}. Note that the tensor product of semisimple categories on the right is indeed semisimple, as $\kk$ is a splitting field for both factors, see \cite{FLP}*{Theorem~5.11, Lemma~7.25}. Hence, $F\iota$ is a dominant linear monoidal functor with a right adjoint. Then the assertion follows from \Cref{cor:ab-env-from-functor-semisimple}. The assumption that $\kk$ has characteristic $0$ is used for the existence of the adjoint functors for the above restriction functors and for the semisimplicity of $\Rep(S_{t+1})$. 

In case (b), $\Ind(\iota)$ has a right adjoint by \Cref{lem:adj-filtered}. Using a functor $F$ as before, we obtain a dominant linear monoidal functor $\iota F$ to a semisimple tensor category such that $\Ind(\iota F)$ has a right adjoint. Again the assertion follows from \Cref{cor:ab-env-from-functor-semisimple}.

The last assertion follows from \Cref{cor:criterion-enough-proj}.
\end{proof}

As a first application, we obtain a new proof for the existence of abelian envelopes for $\cS_t$. Candidate categories for the abelian envelopes in this case were constructed already in \cite{Del}, and the first proof of the existence of such abelian envelopes was given in \cite{CO-ab}.

\begin{corollary} \label{cor:St}
The categories $(\cS_t)_{t\in\kk}$ have abelian envelopes with enough projectives, for any field $\kk$ of characteristic $0$.
\end{corollary}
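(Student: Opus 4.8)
The plan is to read this off directly from \Cref{thm:partition-cats} by taking the ambient category $\cC$ to be all of $\cS_t$. First I would check the structural hypotheses of that theorem for the choice $\cC=\cS_t$: the category $\cS_t=\RepSt$ is trivially a pseudo-tensor subcategory of itself, it contains $[1]$, and its hom-spaces $\cS_t([m],[n])$ are by construction spanned by all partition diagrams $P_{m,n}$, so the condition that $\cC([m],[n])$ be spanned by $P_{m,n}\cap\cC([m],[n])=P_{m,n}$ holds automatically. Thus all the standing assumptions of \Cref{thm:partition-cats} are in place, and it remains only to verify one of the two representability conditions.

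Next I would observe that hypothesis (a) of \Cref{thm:partition-cats} is satisfied for free. Indeed, for $\cC=\cS_t$ the functor in question is $\cS_t(-,\one)|_{\cS_t}=\cS_t(-,\one)$, which is nothing but the hom-functor into the tensor unit $\one=[0]\in\cS_t$ and is therefore representable, with representing object $Z=\one$ itself. This is immediate from the Yoneda embedding and requires no computation; in particular it holds uniformly for every scalar $t\in\kk$, with no exceptional cases.

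With hypothesis (a) confirmed, I would invoke the conclusion of \Cref{thm:partition-cats} together with its ``moreover'' clause: when $\kk$ has characteristic $0$, the category $\cS_t$ acquires an abelian envelope with the quotient property, and in case (a) this envelope in fact has enough projectives, realized as $\rmod{\cS}$ for $\cS$ the full subcategory of splitting objects in $\cS_t$ (via \Cref{cor:criterion-enough-proj}). Running this argument for each $t\in\kk$ then yields the statement for the whole family $(\cS_t)_{t\in\kk}$.

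I do not expect a genuine obstacle at the level of this corollary, since the representability input is trivial and everything else is delegated to \Cref{thm:partition-cats}. The substantive work is hidden inside that theorem rather than here: specifically, the construction of an auxiliary dominant linear monoidal functor $F$ from $\cS_t$ to a semisimple tensor category possessing a right adjoint. For $t\notin\NN_0$ one may take $F=\id$, but for $t\in\NN_0$ one must use the restriction functor $\cS_t\to\cS_{-1}\boxtimes\Rep(S_{t+1})$, whose right adjoint and whose target's semisimplicity both rely on $\cha\kk=0$. If I were presenting this as a self-contained argument I would flag this char-$0$ dependence explicitly, but within the present framework it is already supplied by \Cref{thm:partition-cats} and \Cref{corollary:adjoints_rigid_case}.
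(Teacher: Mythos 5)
Your proposal is correct and matches the paper's argument exactly: the paper proves \Cref{cor:St} by applying \Cref{thm:partition-cats} with $\cC=\cS_t$, where hypothesis (a) holds trivially since $\cS_t(-,\one)|_{\cS_t}=\cS_t(-,\one)$ is representable by $\one$ itself. Your additional remarks on the ``moreover'' clause and the characteristic-$0$ dependence are accurate and consistent with the paper.
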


\begin{proof} This follows immediately from \Cref{thm:partition-cats} with $\cC=\cS_t$.
\end{proof}

We will demonstrate the criterion from \Cref{thm:partition-cats} in more cases in \Cref{sec:hyperoct} and \Cref{sec:modified-St}.

\subsection{Interpolation categories for hyperoctahedral groups} \label{sec:hyperoct}

Assume $\kk$ is of characteristic $0$. Recall from \Cref{bg:RepSt} the definition of the categories $\cH_t$ as the subcategories of $\cS_t$ corresponding to partition diagrams with only even components. 

For all $m,j\ge0$ and any partition diagram $f\in P_{m,j}$, we define recursively (as in \cite{CO-blocks}*{Equation~(2.1)})
$$
x(f) := f - \sum_{f'<f} x(f') \quad\in \cS_t([m],[j]),
$$
where $f'<f$ if $f$ is a proper coarsening of $f$. Note that if $f$ has only even components, then any coarsening of $f$ will have only such components, too, so $x(f)$ will be a well-defined morphism even in $\cH_t$. 
We set
$$
x_j := x(\id_{[j]}) = x(\underbrace{\bar\dots\bar}_{j}) \quad\in \cH_t([j],[j])
$$

The morphisms $x_j$ have the following absorption property.

\begin{lemma}[\cite{Flake-Laugwitz}*{Claim in Proof of Lemma~3.13}] \label{lem:cancelation-x} For any $m,j\ge0$, let $g\in P_{m,j}$ be a partition diagram with a component containing two lower points. Then $x_j g=0$.
\end{lemma}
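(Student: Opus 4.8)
The plan is to expand $x_j$ in the diagram basis via Möbius inversion and reduce the claim to a vanishing identity for the Möbius function of the partition lattice. First I would fix notation: write $\Pi_j$ for the lattice of set partitions of $\{1,\dots,j\}$, ordered by refinement, with minimum $\hat 0$ (all singletons) and maximum $\hat 1$. Each $P\in\Pi_j$ determines a coarsening $\rho_P$ of $\id_{[j]}$, obtained by merging the through-strands whose labels lie in a common block of $P$; note $\rho_{\hat 0}=\id_{[j]}$ and every $\rho_P$ has only even components, so lies in $\cH_t$. Unwinding the recursion for $x(-)$, which expresses each diagram as the sum of $x(-)$ over its coarsenings, Möbius inversion in $\Pi_j$ gives
$$
x_j \;=\; x(\id_{[j]}) \;=\; \sum_{P\in\Pi_j}\mu_{\Pi_j}(\hat 0,P)\,\rho_P .
$$
Given $g\in P_{m,j}$, let $Q\in\Pi_j$ be the partition in which $i\sim k$ exactly when the lower points $i',k'$ of $g$ lie in the same block of $g$. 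The hypothesis that a block of $g$ contains two lower points says precisely that $Q\neq\hat 0$, and this is the only feature of $g$ that will enter.

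Next I would analyze the composite $\rho_P\,g$. Composing $\rho_P$ with $g$ glues each lower point $i'$ of $g$ to the corresponding upper point of $\rho_P$; since $\rho_P$ joins the upper point labeled $i$ to its own output and to all uppers and outputs in the same block of $P$, the effect is to relabel the lower boundary of $g$ as the output boundary and to additionally merge those blocks of $g$ indexed within a common block of $P$. Two facts are crucial and constitute the main obstacle of the argument. First, no closed components are ever created, because every middle point stays joined to its own output through $\rho_P$; hence $\rho_P\,g$ carries no scalar factor of $t$ and is a genuine partition diagram. Second, the total merging induced on the labels $\{1,\dots,j\}$ is the join $P\vee Q$, since $g$ already merges the blocks meeting at its lower points according to $Q$. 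Consequently $\rho_P\,g$ depends on $P$ only through $R:=P\vee Q$; write $\rho_P\,g=\Phi(R)$. Establishing these two claims carefully from the definition of composition in $\cS_t$ is the technical heart of the proof.

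Finally I would collect terms by the value of the join and apply a Weisner-type cancellation. Grouping the expansion of $x_j$ yields
$$
x_j\,g \;=\; \sum_{R\ge Q}\Big(\sum_{\substack{P\in\Pi_j\\ P\vee Q=R}}\mu_{\Pi_j}(\hat 0,P)\Big)\,\Phi(R),
$$
so it suffices to show each inner coefficient $c_R$ vanishes when $Q>\hat 0$. For this I would compute cumulative sums: for any $S\in\Pi_j$,
$$
\sum_{R\le S}c_R \;=\; \sum_{\substack{P\in\Pi_j\\ P\vee Q\le S}}\mu_{\Pi_j}(\hat 0,P).
$$
The condition $P\vee Q\le S$ is equivalent to $P\le S$ together with $Q\le S$; if $Q\not\le S$ the sum is empty, while if $Q\le S$ it equals $\sum_{P\le S}\mu_{\Pi_j}(\hat 0,P)$, which is $0$ since $S\ge Q>\hat 0$. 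Thus all cumulative sums vanish, forcing $c_R=0$ for every $R$ by Möbius inversion on $\Pi_j$, and therefore $x_j\,g=0$.
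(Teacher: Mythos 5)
The paper does not actually prove this lemma --- it imports it verbatim from \cite{Flake-Laugwitz} (the claim in the proof of Lemma~3.13 there), so there is no in-paper argument to compare against. Your proposal is a correct, self-contained proof. The three key steps all check out: (i) the coarsenings of $\id_{[j]}$ are exactly the diagrams $\rho_P$ for $P$ in the partition lattice $\Pi_j$, and unwinding the recursion $f=\sum_{f'\ge f}x(f')$ is precisely M\"obius inversion, giving $x_j=\sum_P\mu(\hat 0,P)\rho_P$; (ii) in the composite $\rho_P\,g$ every middle point is tied to its own output through $\rho_P$, so no closed components (hence no factors of $t$) appear, and the induced identification on the middle labels is generated by $P$ and $Q$, i.e.\ is $P\vee Q$, so $\rho_P\,g$ depends only on $P\vee Q$; (iii) the coefficients $c_R=\sum_{P\vee Q=R}\mu(\hat 0,P)$ all vanish when $Q>\hat 0$, by the cumulative-sum computation $\sum_{R\le S}c_R=[Q\le S]\sum_{P\le S}\mu(\hat 0,P)=[Q\le S]\,[S=\hat 0]=0$ followed by M\"obius inversion --- this is exactly a Weisner-type cancellation, and as you note one does not even need the diagrams $\Phi(R)$ to be linearly independent, since every coefficient is shown to be zero. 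The only cosmetic caveat is that the composite $x_j\,g$ is computed in $\cS_t$ rather than $\cH_t$, which is how the lemma is stated anyway. This is a clean argument and arguably more transparent than an ad hoc pairing of terms.
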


It follows immediately that $x_j$ is an idempotent endomorphism, for all $j\ge0$, as $x_j f=0$ for any proper coarsening $f$ of $\id_{[j]}$.

For any $j\ge0$, we also set
$$
e_j := \frac{1}{j!} \sum_{\s\in S_j} \s
\quad\in \cH_t([j],[j]) ,
$$
where elements of the symmetric group are viewed as permutation diagrams, which defines a group homomorphism from $S_j$ to $\Aut_{\cS_t}([j])$.
Then $x_j$ and $e_j$ are commuting idempotent endomorphisms, as $\sigma x(\id_{[j]})=x(\sigma)=x(\id_{[j]})\sigma$, for all $\sigma\in S_j$, so $x_j e_j$ is an idempotent endomorphism.

For $m,j\ge0$ and any partition diagram $f\in P_{m,j}$, we define recursively
$$
x'(f) := f - \sum_{f'<'f} x'(f') \quad\in \cS_t([m],[j]),
$$
where $f'<'f$ if and only if $f'$ is a proper coarsening of $f$ such that all upper components of $f$ are preserved in $f'$.

For any $j\ge0$, we define $p_j$ as the unique partition diagram with $j$ upper points, no lower points, and $j$ components. 

\begin{lemma} \label{lem:computation-H} For any $m,j\ge0$, let $g\in P_{m,j}$ be a partition diagram every component of which has an even number of points and at most one lower point. Let $f\in P_{m,0}$ be the partition diagram induced on the $m$ upper points. Then
$$
p_j x_j e_j g = x'(f)
$$
in $\cS_t$.
\end{lemma}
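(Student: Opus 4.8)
My plan is to compute the left-hand side explicitly, as a signed sum of coarsenings of $f$, and then match it with the recursive definition of $x'$. First I would record the shape of $g$. Since every component is even and contains at most one lower point, the $j$ lower points lie in $j$ distinct components, each with an \emph{odd} number of upper points; the remaining components are purely upper and thus have an \emph{even} number of points. Writing $A_1,\dots,A_j$ for the upper parts of the mixed components (the odd blocks of $f$) and $B_1,\dots,B_r$ for the purely upper components (the even blocks of $f$), I isolate the one structural fact that drives everything: for a single diagram $h\in\cS_t([m],[j])$, capping the lower points with $p_j$ gives $p_j h=t^{c}\,\bar h$, where $\bar h\in P_{m,0}$ is the partition induced by $h$ on its upper points (connectivity through lower points being retained) and $c$ is the number of components of $h$ containing no upper point. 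Because every lower point of $g$ lies in a component with a nonempty upper part, I will verify that no such purely lower component is ever created in the terms below, so all these powers of $t$ are trivial.

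Next I would expand $x_j=x(\id_{[j]})$ and $e_j$. Unwinding the recursion defining $x$ gives $x_j=\sum_{P}\mu(\hat 0,P)\,\pi_P$, where $P$ runs over set partitions of $\{1,\dots,j\}$, $\pi_P\in\cS_t([j],[j])$ is the coarsening of $\id_{[j]}$ joining the through-strands within each block of $P$, and $\mu$ is the Möbius function of the partition lattice; together with $e_j=\tfrac1{j!}\sum_{\sigma\in S_j}\sigma$. For fixed $P$ and $\sigma$ I would compute $p_j\,\pi_P\,\sigma\,g$: the permutation $\sigma$ and the merge $\pi_P$ only reorganize the lower strands, so their combined effect is to merge the odd blocks $A_k$ according to the partition $\sigma^{-1}(P)$ of $\{1,\dots,j\}$ while never touching the even blocks $B_\ell$; capping by $p_j$ then returns the coarsening $f_{\sigma^{-1}(P)}$ of $f$ (merging the $A_k$ within a common block of $\sigma^{-1}(P)$, fixing the $B_\ell$), with no purely lower component, hence no factor of $t$. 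Summing over $\sigma$ and $P$ and reindexing by $Q=\sigma^{-1}(P)$, the $S_j$-invariance of $\mu(\hat 0,-)$ removes the dependence on $\sigma$ and absorbs the symmetrization, leaving
$$
p_j\,x_j\,e_j\,g=\sum_{Q}\mu(\hat 0,Q)\,f_Q,
$$
the signed sum over all coarsenings of $f$ that merge only the odd blocks $A_1,\dots,A_j$.

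Finally I would identify this sum with $x'(f)$. The point is that $x'(f)$, unwound through its own recursion, is the Möbius inversion over precisely the poset of coarsenings of $f$ that preserve the even upper components, and that the assignment $Q\mapsto f_Q$ is an isomorphism from the partition lattice of $\{A_1,\dots,A_j\}$ onto that poset carrying $\hat 0$ to $f$ and intertwining the two Möbius functions. Here the hypothesis that all components of $g$ are even is exactly what makes the blocks merged by $x_j$ (the odd ones, carrying the lower strands) coincide with the blocks that $<'$ declares mergeable, while the purely upper even blocks coincide with the components that $<'$ preserves.

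The main obstacle I anticipate is this last identification. The delicate part is not that the blocks $B_\ell$ stay fixed, but that iterated merges of the odd blocks must receive the correct partition-lattice weights: a simultaneous merge of $k$ odd blocks appears with coefficient $(-1)^{k-1}(k-1)!$, not a naive $\pm1$. Concretely, one must check that once two odd blocks have merged into a (now even) block, this block still counts as mergeable for the purposes of $<'$, so that the poset below $f$ is the \emph{full} partition lattice of the odd blocks rather than a truncation in which freshly created even blocks are erroneously frozen; this is exactly the step where the even-component hypothesis and the precise reading of ``preserving the upper components'' must be used. The remaining points—that $p_j$ records only the induced upper partition, that no power of $t$ intervenes, and that $e_j$ is washed out by symmetry—are routine once the reduction of the first paragraph is in place.
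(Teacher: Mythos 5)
Your proposal is correct and follows essentially the same route as the paper's proof: the paper packages your M\"obius-function expansion as the two identities $x_j g = x'(g)$ and $p_j e_j = p_j$ (using that $x_j$ and $e_j$ commute) and then caps off with $p_j$, which is exactly your computation $p_j \pi_P \sigma g = f_{\sigma^{-1}(P)}$ summed against the partition-lattice M\"obius coefficients. The one point you flag as delicate --- that the recursion defining $x'(f)$ for $f\in P_{m,0}$ must unwind over the \emph{full} partition lattice of the odd blocks, with freshly merged even blocks remaining mergeable --- is indeed the only real subtlety, and it is precisely the step the paper's own proof passes over most quickly.
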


\begin{proof} We compute $x_j g = x'(g)$, as $x_j$ is a linear combination of coarsening of the identity, post-composing with which produces coarsenings of $g$ that involve exactly those components that have lower points. We also compute $p_j e_j=p_j$, as $p_j\sigma=\sigma$ for all $\sigma\in S_j$. Hence, the left-hand side of the above asserted identity equals
$$
p_j x'(g)
$$
which is $x'(f)$, as post-composition with $p_j$ has just the effect of removing all lower points from the components that contain them. 
\end{proof}

We set $\cC_i:=\langle [0],[1],\dots,[i]\rangle$, the pseudo-abelian subcategory of $\cH_t$ generated by the objects $[0],[1],\dots,[i]$. We define $X_j$ as the object in $\cH_t$ corresponding to the idempotent endomorphism $x_j e_j$, for all $j\ge0$.

\begin{proposition} \label{lem:main-H} For every $i\ge0$, the functor $\cS_t(-,\one)|_{\cC_i}$ is representable by $X:=\bigoplus_{0\le j\le i} X_j$.
\end{proposition}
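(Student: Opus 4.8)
The plan is to realize the claimed isomorphism as the map induced by a single universal morphism $p\colon X\to\one$, and then to verify it on the generators $[0],\dots,[i]$ by a direct diagrammatic count. By the Yoneda lemma, a natural transformation $\eta\colon \cC_i(-,X)\to\cS_t(-,\one)|_{\cC_i}$ is the same datum as a morphism $p\in\cS_t(X,\one)$, acting by $\eta_A(\phi)=p\circ\phi$. I would define $p$ by letting its component on the summand $X_j=\im(x_je_j)$ be $p_j\iota_j$, where $\iota_j\colon X_j\hookrightarrow[j]$ is the inclusion splitting the idempotent $x_je_j$. Since both functors are additive and $\cC_i$ is the pseudo-abelian category generated by $[0],\dots,[i]$, it suffices to prove that $\eta_{[m]}$ is an isomorphism for each $0\le m\le i$.

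Next I would identify the two sides on $[m]$. The target $\cS_t([m],\one)=\cS_t([m],[0])$ has as basis the set $P_{m,0}$ of partition diagrams with $m$ upper and no lower points, i.e.\ the set partitions $f$ of the upper points. For each such $f$, let $s=s(f)$ be its number of odd components, choose a diagram $g_f\in P_{m,s}$ with all components even and at most one lower point per component whose induced upper partition is $f$ (so that precisely the $s$ odd components of $f$ each acquire a single lower point), and set $\xi_f:=\pi_s(x_se_sg_f)\in\cC_i([m],X_s)$, which is legitimate since $s\le m\le i$. Then \Cref{lem:computation-H} gives $p\circ\xi_f=p_sx_se_sg_f=x'(f)$, and because $f$ has no lower points one has $x'(f)=f$. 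Thus $\eta_{[m]}$ sends each $\xi_f$ to the basis element $f$; in particular $\eta_{[m]}$ is surjective.

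The core step is to show that the $\xi_f$ also span the domain $\cC_i([m],X)=\bigoplus_{0\le j\le i}x_je_j\,\cH_t([m],[j])$: together with surjectivity onto the $|P_{m,0}|$-dimensional target, this forces the domain to have dimension exactly $|P_{m,0}|$ and $\eta_{[m]}$ to be an isomorphism. Here I would argue diagrammatically. For a basis diagram $g$ of $\cH_t([m],[j])$, commutativity of $x_j$ and $e_j$ gives $x_je_jg=e_j(x_jg)$, and \Cref{lem:cancelation-x} yields $x_jg=0$ unless every component of $g$ carries at most one lower point; evenness of all components then forces the $j$ lower-point-bearing components to be exactly the odd upper components, so $g$ induces an upper partition with precisely $j$ odd components. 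Finally, any two such diagrams with the same induced upper partition differ by a permutation $\tau\in S_j$ of the lower points, i.e.\ by post-composition with $\tau$, and using $x_j\tau=\tau x_j$ and $e_j\tau=e_j$ one gets $e_jx_j(\tau g)=e_jx_jg$; hence $x_je_jg$ depends only on the induced upper partition, so $x_je_j\cH_t([m],[j])$ is spanned by the $\xi_f$ with $s(f)=j$.

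The main obstacle I anticipate is precisely this last combinatorial bookkeeping: controlling the image of the idempotent $x_je_j$ on the diagram basis, and checking that $\xi_f$ is well defined independently of the chosen representative $g_f$. By contrast, the reduction to the generators $[m]$ and the invocations of \Cref{lem:computation-H} and \Cref{lem:cancelation-x} should be routine.
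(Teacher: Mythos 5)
Your proposal is correct and follows essentially the same route as the paper: the same morphism $p=(p_jx_je_j)_j$, the same reduction to the objects $[m]$ with $m\le i$, and the same spanning-set-versus-basis count using \Cref{lem:cancelation-x} and \Cref{lem:computation-H}. The only (valid) shortcut is your direct observation that $x'(f)=f$ for $f\in P_{m,0}$, where the paper instead invokes a triangularity/filtration argument to see that the $x'(f)$ form a basis.
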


\begin{proof} Set $p:=(p_j x_j e_j)_{0\le j\le i}\: X=\bigoplus_{0\le j\le i}X_j\to\one$. It suffices to show that for all $0\le m\le i$, the linear map
$$
\varphi:=(p\circ-)\: \cH_t([m],X) \to \cS_t([m],\one)
$$
is bijective.

For all $0\le j\le i$, let the symmetric group $S_j$ act on the set $P_{m,j}$ of partition diagrams with $m$ upper and $j$ lower points by permuting the lower points. This action preserves the subset $P'_{m,j}$ of partition diagrams such that every component has an even number of points, and even the subset $P''_{m,j}$ of $P'_{m,j}$ of partition diagrams such that every component has at most one lower point. Let $Q_{m,j}$ be a set of orbit representatives of $S_j$ acting on $P''_{m,j}$. We can alternatively think of $Q_{m,j}$ as the set of set partitions of the set $\{1,\dots,m\}$ with exactly $j$ odd components.

Let $q_j\: X_j\to X$ be the embedding of $X_j$ into the direct sum $X$.
Consider the following collections of morphisms:
\begin{align*}
B := \big( q_j x_j e_j g : 0\le j\le i, g \in Q_{m,j} \Big) 
&\quad \subset M:= \cH_t([m],X) , 
\\
B' := \Big( x'(f) \: f\in P_{m,0} \Big) 		%
&\quad \subset M':= \cS_t([m],\one) .
\end{align*}

We claim that $B$ is a spanning set of $M$. Indeed, by definition, a spanning set of $M$ is given by
$$
\big( q_j x_j e_j g : 0\le j\le i, g \in P'_{m,j} \Big) .
$$
Now if some $g$ in this collection has a component with two lower points, then $x_j g$ is zero by \Cref{lem:cancelation-x}. Hence, it suffices to pick $g$ in $P''_{m,j}$. Furthermore, if two elements $g,g'$ from $P''_{m,j}$ are related to each other by a permutation of lower points, then $e_j g=e_j g'$, so it suffices to pick $g$ in $Q_{m,j}$.

We also claim that $B'$ is a basis of $M'$. Consider $M'$ as a filtered vector space, where every partition diagram is assigned as degree its number of components. Then up to a term of lower degree, $x'(f)$ agrees with $f$, for all $f\in P_{m,0}$, and the morphisms corresponding to the partition diagrams $P_{m,0}$ form a basis of $M'$ by definition. This shows the claim.

Now using \Cref{lem:computation-H}, we compute
$$
\varphi( q_j x_j e_j g )
= p_j x_j e_j g
= x'(f),
$$
for all $0\le j\le i$ and $g\in Q_{m,j}$, where $f$ is the partition diagram induced by $g$ on its $m$ upper points. In this situation, $g\in\bigsqcup_{0\le j\le i} Q_{m,j}$ is determined by the partition $f$ it induces, and any $f\in P_{m,0}$ is induced by some $g\in \bigsqcup_{0\le j\le i} Q_j$, namely the one whose components are the ones of $f$, modified by adding exactly one lower point to each odd component.

Hence, the elements of $B$ are sent to the the elements of $B'$ via the linear map $\varphi$, up to terms of lower degrees. This implies that $\varphi$ is bijective.
\end{proof}

\begin{corollary} \label{cor:main-H} The cocontinuous extension to ind-completions of the natural faithful functor $\cH_t\to\cS_t$ has a right adjoint, and $\cH_t$ has a monoidal abelian envelope with the quotient property for each $t\in\kk$.
\end{corollary}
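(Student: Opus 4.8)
The plan is to deduce the corollary directly from the general criterion \Cref{thm:partition-cats}, applied to the subcategory $\cC=\cH_t$ of $\cS_t$, with the hypotheses supplied by \Cref{lem:main-H}. First I would confirm that $\cH_t$ fits the standing setup of \Cref{thm:partition-cats}: by its definition in \Cref{bg:RepSt}, $\cH_t$ is the pseudo-tensor subcategory of $\cS_t$ attached to the set $Q$ of partition diagrams all of whose components are even. In particular it contains the object $[1]$, and its hom-spaces $\cH_t([m],[n])$ are by construction spanned by $P_{m,n}\cap\cH_t([m],[n])$, so the running assumptions of \Cref{thm:partition-cats} hold.

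Next I would verify hypothesis (b) of \Cref{thm:partition-cats}, namely that each functor $\cS_t(-,\one)|_{\cC_i}$ is representable, where $\cC_i=\langle[0],[1],\dots,[i]\rangle$ is the pseudo-abelian subcategory generated by $[0],\dots,[i]$. This is exactly the content of \Cref{lem:main-H}, which exhibits $X=\bigoplus_{0\le j\le i} X_j$ as a representing object. Since the definition of $\cC_i$ used just before \Cref{lem:main-H} coincides verbatim with the one in \Cref{thm:partition-cats}, no reconciliation is needed.

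With the standing setup and hypothesis (b) both in place, \Cref{thm:partition-cats} in case (b) immediately yields that the cocontinuous extension $\Ind(\iota)$ of the inclusion $\iota\colon\cH_t\to\cS_t$ to the ind-completions has a right adjoint, and, since $\kk$ has characteristic $0$, that $\cH_t$ has a monoidal abelian envelope with the quotient property; as $t\in\kk$ was arbitrary, this holds for every $t$. I expect essentially no obstacle at this stage: all the real work sits upstream in the combinatorial representability statement \Cref{lem:main-H}, whose proof rests on the absorption property \Cref{lem:cancelation-x} of the idempotents $x_j$ together with the explicit computation \Cref{lem:computation-H}. Once those are in hand, the present corollary is a formal consequence of the general theory developed in the earlier sections.
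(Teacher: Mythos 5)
Your proposal is correct and follows exactly the paper's route: the corollary is deduced by feeding the representability statement of \Cref{lem:main-H} into case (b) of the general criterion \Cref{thm:partition-cats}, with the standing characteristic-zero assumption of the section supplying the remaining hypothesis. The paper's own proof is a one-line citation of precisely these two results, so your more detailed verification of the hypotheses adds nothing new but is entirely accurate.
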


\begin{proof} This follows from \Cref{lem:main-H} using \Cref{thm:partition-cats}.
\end{proof}

\begin{remark} In the upcoming paper \cite{FLP-future}, we show that the abelian envelope of $\cH_t$ even has enough projectives.   
\end{remark}

\subsection{Interpolation categories for modified symmetric groups} \label{sec:modified-St} Let $\kk$ be any field. Recall from \Cref{bg:RepSt} the definition of the subcategories $\cS'_t$ of $\cS_t$ corresponding to partitions diagrams with an even number of odd components.

Assume for the rest of this subsection that $t\neq0$ and set 
$$
e_0 := \id_{[0]}, \quad
e_1 := t^{-1} \tp{1,0,11} ,
$$
then $e_j$ can be viewed as an idempotent endomorphism in $\cS'_t([j],[j])$ or $\cS_t([j],[j])$, for $j=0,1$. Let $p_j$ be the unique partition diagram in $P_{j,0}$, for $j=0,1$. Let $X_j$ be the object in $\cS'_t$ or $\cS_t$ corresponding to the idempotent $e_j$, then $p_j$ represents a morphism from $X_j$ to $\one=[0]$ in $\cS_t$ (but not in $\cS'_t$ for $j=1$). Note that $p_j e_j=p_j$ for $j=0,1$.

\begin{proposition} \label{prop:main-S-prime}
    The functor $\cS_t(-,\one)|_{\cS'_t}$ is representable by $X:=X_0\oplus X_1$.
\end{proposition}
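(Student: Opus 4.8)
The plan is to realize the representability by an explicit morphism and to reduce the claim to a bijection of hom-spaces on the generating objects $[m]$, in parallel with the proof of \Cref{lem:main-H}. I would set $p:=(p_0e_0,p_1e_1)=(p_0,p_1)\colon X_0\oplus X_1\to\one$ in $\cS_t$, where the identity $p_1e_1=p_1$ ensures that $p_1$ descends to a morphism $X_1=\im(e_1)\to\one$; note that $p$ lives in $\cS_t(X,\one)$ and not in $\cS'_t$. The asserted isomorphism is then postcomposition with $p$, and since both $\cS'_t(-,X)$ and $\cS_t(-,\one)|_{\cS'_t}$ are additive functors and every object of $\cS'_t$ is a direct summand of a finite direct sum of the objects $[m]$, it suffices to prove that
$$\varphi:=(p\circ-)\colon\cS'_t([m],X)\longrightarrow\cS_t([m],\one)$$
is bijective for every $m\ge0$.

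I would then describe both sides in terms of set partitions. The target $\cS_t([m],\one)=\cS_t([m],[0])$ has as a basis the set partitions of $\{1,\dots,m\}$, and this basis splits according to the parity of the number of odd components into a subspace $E_m$ (even number of odd components) and a subspace $O_m$ (odd number), with $\cS_t([m],[0])=E_m\oplus O_m$. On the source, $\cS'_t([m],X_0)=\cS'_t([m],[0])$ is by the definition of $\cS'_t$ exactly the span $E_m$, and $\varphi$ restricted to this summand is $p_0\circ-=\id\circ-$, i.e.\ the inclusion $E_m\hookrightarrow\cS_t([m],[0])$, an isomorphism onto $E_m$.

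For the other summand, $\cS'_t([m],X_1)=e_1\,\cS'_t([m],[1])$, and the key computation is to evaluate $e_1g$ and $p_1g$ combinatorially for a basis diagram $g\in P_{m,1}$. Writing $\ol{g}$ for the partition of $\{1,\dots,m\}$ obtained from $g$ by deleting the lower point from its block, I would check that the cap-cup in $e_1$ yields $e_1g=c_g\,(\ol{g}\sqcup\{1'\})$ for a nonzero scalar $c_g$ (here $t\neq0$ is used to cancel the factor $t$ produced when a closed loop appears), where $\ol{g}\sqcup\{1'\}$ has upper partition $\ol{g}$ and a single lower point forming its own block, and that $\varphi(e_1g)=p_1g$ is a nonzero scalar multiple of $\ol{g}$. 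The heart of the argument is the parity count: deleting the unique lower point from any $g$ with an even number of odd components always flips the parity, so $\ol{g}$ lies in $O_m$, and conversely every $\pi\in O_m$ equals $\ol{g}$ for $g=\pi\sqcup\{1'\}$, which has an even number of odd components. Hence $\{\,\ol{g}\sqcup\{1'\}:\ol{g}\in O_m\,\}$ is a basis of $\cS'_t([m],X_1)$ that $\varphi$ carries to nonzero multiples of the basis $O_m$, so $\varphi$ restricts to an isomorphism $\cS'_t([m],X_1)\xrightarrow{\ \sim\ }O_m$.

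Combining the two summands, $\varphi$ is the direct sum of isomorphisms onto the complementary subspaces $E_m$ and $O_m$ whose sum is all of $\cS_t([m],\one)$, hence a bijection, which proves the proposition. I expect the main obstacle to be the careful bookkeeping of the compositions $e_1g$ and $p_1g$ as operations on set partitions---in particular tracking exactly when a closed component (a factor $t$) is created and confirming that the resulting scalars are nonzero because $t\neq0$---together with verifying the parity flip uniformly across the cases for the block of $g$ containing the lower point. Once these combinatorial facts are established, assembling them into the bijection and passing from the generators $[m]$ to all objects of $\cS'_t$ by additivity are routine.
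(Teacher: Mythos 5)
Your proof is correct and takes essentially the same route as the paper: the same representing morphism $p=(p_0,p_1)$, the same reduction to bijectivity of $\varphi=(p\circ -)$ on the objects $[m]$, and the same combinatorial core (normalizing via $e_1$ to diagrams whose lower point is a singleton block, using $t\neq 0$, and the parity flip of the number of odd components upon deleting that lower point). The only difference is presentational: you split the target as $E_m\oplus O_m$ and match the summands $X_0$, $X_1$ onto them separately, whereas the paper maps a single spanning set bijectively onto the basis of all partitions of $\{1,\dots,m\}$.
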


\begin{proof} Similar to \Cref{lem:main-H}, it suffices to show that the linear map
$$
\varphi:=(p\circ-)\:\cS'_t([m],X) \to \cS_t([m],\one)
$$
is bijective for all $m\ge0$, with $p:=(p_0,p_1)\:X=X_0\oplus X_1\to\one$.

Let $P'_{m,j}\subset P_{m,j}$ be those partition diagrams that have an even number of odd components, and let $P''_{m,j}\subset P'_{m,j}$ be those partition diagrams all of whose lower points are the only points in their respective component, for $j=0,1$. We can alternatively think of $P''_{m,0}$ as the set of set partitions of $\{1,\dots,m\}$ with an even number of odd components, and of $P''_{m,1}$ as the set of set partitions of $\{1,\dots,m\}$ with an odd number of odd components.

Let $q_j\:X_j\to X$ be the embeddings into the direct sum for $j=0,1$.

Consider the following collections of morphisms:
\begin{align*}
B:=\Big( q_j e_j g : 0\le j\le1, g\in P''_{m,j} \Big) 
\quad\subset M:=\cS'_t([m],X)
\\
B':=\Big( f\in P_{m,0} \Big)
\quad\subset M':=\cS_t([m],\one) .
\end{align*}
We claim that $B$ is a spanning set of $M$. Indeed, a spanning set of $M$ is given by the morphisms $q_j e_j g$, for $0\le i\le 1$ and $g\in P'_{m,j}$, but if $g\in P'_{m,j}$ has a lower point that is not the only point in its component, then $j=1$ and $e_1 g=t^{-1} e_1 g'$ for the unique $g'\in P''_{m,1}$ that is obtained from $g$ by placing the unique lower point in its own component. This shows the claim. 

We also note that $B'$ is a basis of $M'$ by definition, and that $\varphi$ induces a bijection between $B$ and $B'$, as $p_j e_j g = f$ for all $j=0,1$, $g\in P''_{m,j}$, for the unique partition $f$ induced by $g$ on its $m$ upper points. This proves the assertion.
\end{proof}

\begin{corollary} \label{cor:main-S-prime} The natural faithful functor $\cS'_t\to\cS_t$ has a right adjoint, for each $t\in\kk\setminus\{0\}$. Moreover, if $\kk$ has characteristic $0$, then $\cS'_t$ has an abelian envelope with enough projectives that can be realized as $\rmod\cS$ for the category $\cS$ of splitting objects in $\cS'_t$, for each $t\in\kk\setminus\{0\}$.
\end{corollary}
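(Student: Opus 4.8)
The plan is to read off the corollary from \Cref{thm:partition-cats}, case (a), applied to $\cC=\cS'_t$, with \Cref{prop:main-S-prime} supplying the representability hypothesis. First I would check the standing hypotheses of \Cref{thm:partition-cats} for $\cS'_t$. By its construction in \Cref{bg:RepSt}, $\cS'_t$ is the pseudo-abelian completion of the monoidal subcategory of $\cS_t$ on the objects $[m]$ whose hom-spaces are spanned by partition diagrams with an even number of odd components. In particular it is a pseudo-tensor subcategory of $\cS_t$ containing $[1]$: the identity, cup, and cap diagrams on $[1]$ each consist of one even component, hence have zero (an even number of) odd components, so they lie in $\cS'_t$, making $[1]$ self-dual and $\cS'_t$ rigid. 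Its hom-spaces $\cS'_t([m],[n])$ are, by definition, spanned by $P_{m,n}\cap\cS'_t([m],[n])$, and $\End_{\cS'_t}([0])=\kk$, so $\cS'_t$ is indeed a pseudo-tensor category.

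With these verified, hypothesis (a) of \Cref{thm:partition-cats} is precisely the representability of $\cS_t(-,\one)|_{\cS'_t}$, which is exactly the content of \Cref{prop:main-S-prime}; here the assumption $t\neq0$ enters, as it is needed to define the idempotent $e_1$ and hence the representing object $X=X_0\oplus X_1$. Applying the first conclusion of \Cref{thm:partition-cats}, case (a), which goes through \Cref{corollary:adjoints_rigid_case}, then yields a right adjoint to the natural faithful functor $\cS'_t\to\cS_t$. I would stress that this part requires only $t\neq0$ and no hypothesis on the characteristic, since \Cref{corollary:adjoints_rigid_case} uses only representability of $\cS_t(F(-),\one)|_{\cS'_t}$, dominance of the inclusion (the image of $[1]$ is a tensor generator of $\cS_t$, so \Cref{expl:tensor-generator} applies), and idempotent completeness of $\cS'_t$, which holds as it is pseudo-abelian.

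The remaining assertions are the characteristic-$0$ statements. Here I would invoke the final conclusion of \Cref{thm:partition-cats}, case (a): over a field of characteristic $0$, $\cS'_t$ has an abelian envelope with the quotient property which, by the ``moreover'' clause together with \Cref{cor:criterion-enough-proj}, has enough projectives and is realized as $\rmod{\cS}$ for $\cS$ the full subcategory of splitting objects in $\cS'_t$. I expect no genuine obstacle, since the substantive work is already carried out in \Cref{prop:main-S-prime}; the only points requiring care are bookkeeping, namely separating the field-independent existence of the right adjoint (valid for all $t\neq0$) from the characteristic-$0$ input that \Cref{thm:partition-cats} uses to upgrade this to an abelian envelope, which is the semisimplicity of the auxiliary target $\cS_{-1}\boxtimes\Rep(S_{t+1})$ and the existence of the adjoints of the restriction functor feeding into it.
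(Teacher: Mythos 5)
Your proposal is correct and follows exactly the paper's route: the paper's proof is the one-line deduction of the corollary from \Cref{prop:main-S-prime} via case (a) of \Cref{thm:partition-cats}. The extra bookkeeping you supply (verifying that $\cS'_t$ contains $[1]$ and has diagram-spanned hom-spaces, and separating the characteristic-free existence of the right adjoint from the characteristic-$0$ input) is accurate and consistent with how those hypotheses are used in the paper.
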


\begin{proof} This follows from \Cref{prop:main-S-prime} using \Cref{thm:partition-cats}.
\end{proof}

\appendix

\section{}

\subsection{Free cocompletions and ind-completions} \label{appendix:ind}

Let $\cC$ be a small category.
Let $\cSet$ denote the category of sets.
The category of presheaves $\PSh( \cC )$ on $\cC$ is the category whose objects are functors of type $\cC{\op} \rightarrow \cSet$ and whose morphisms are natural transformations.
We denote the \emph{Yoneda embedding} as follows:
\begin{align*}
    \cC \xrightarrow{Y} \PSh( \cC ), \qquad
    A \mapsto \cC( -, A ) .
\end{align*}
A category is called \emph{cocomplete} if it admits all colimits of small diagrams.
A functor is called \emph{cocontinuous} if it commutes with all colimits of small diagrams.
The category of presheaves can be seen as the \emph{free cocompletion of $\cC$} in the following sense: for every functor of type 
$\cC \xrightarrow{G} \cD$
into a cocomplete category $\cD$, there exists a unique (up to natural isomorphism) cocontinuous functor 
$\PSh(\cC) \xrightarrow{\widehat{G}} \cD$
such that the following diagram commutes (up to natural isomorphism):
\begin{center}
       \begin{tikzpicture}
        \coordinate (r) at (4,0);
        \coordinate (d) at (0,-1.5);
        \node (A) {$\cC$};
        \node (B) at ($(A)+(r)$) {$\PSh( \cC )$};
        \node (C) at ($(B) + (d)$) {$\cD$};
        \draw[->,thick] (A) to node[above]{$Y$}(B);
        \draw[->,thick] (A) to node[below]{$G$} (C);
        \draw[->,thick,dashed] (B) to node[right=3pt]{$\widehat{G}$} (C);
        \end{tikzpicture}
\end{center}

\begin{lemma}
Let $G: \cC \rightarrow \cD$ be a functor whose target is a cocomplete category.
The functor $\widehat{G}\:\PSh(\cC)\to\cD$ always has a right adjoint $R$
which is given by
\begin{align*}
    X &\mapsto \cD( G(-), X )|_{\cC\op}
\end{align*}
\end{lemma}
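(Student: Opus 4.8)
The plan is to establish the hom-set bijection
$$
\cD(\widehat{G}(P), X) \;\cong\; \PSh(\cC)(P, R(X))
$$
naturally in $P\in\PSh(\cC)$ and $X\in\cD$, where $R(X):=\cD(G(-),X)|_{\cC\op}$. First I would check that $R$ is well-defined: for each $X$, the assignment $A\mapsto\cD(G(A),X)$ is a presheaf on $\cC$ (contravariant via $G$ applied to morphisms of $\cC$), and $R$ is functorial in $X$ by post-composition.

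The key structural input is the density theorem (co-Yoneda lemma): every presheaf $P$ is canonically a colimit of representables indexed over its category of elements, i.e.\ over the pairs $(A,a)$ with $a\in P(A)$, so that $P\cong\colim_{(A,a)} Y(A)$. Since $\widehat{G}$ is cocontinuous and satisfies $\widehat{G}\circ Y\cong G$, this yields $\widehat{G}(P)\cong\colim_{(A,a)} G(A)$.

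I would then compute both sides of the claimed bijection as limits over this same diagram. On the left, cocontinuity of $\widehat{G}$ together with the fact that $\cD(-,X)$ turns colimits into limits gives
$$
\cD(\widehat{G}(P), X)\;\cong\;\lim_{(A,a)}\cD(G(A),X).
$$
On the right, $\PSh(\cC)(-,R(X))$ turns colimits into limits, and the Yoneda lemma identifies $\PSh(\cC)(Y(A),R(X))\cong R(X)(A)=\cD(G(A),X)$, so that
$$
\PSh(\cC)(P, R(X))\;\cong\;\lim_{(A,a)}\cD(G(A),X).
$$
Both sides are thus canonically the same limit, giving the desired isomorphism.

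The main obstacle is naturality in both variables. One clean way to avoid tracking the bookkeeping through the density presentation is to first verify the adjunction on representables, where it is exactly the Yoneda lemma: $\cD(\widehat{G}(Y(A)),X)=\cD(G(A),X)\cong\PSh(\cC)(Y(A),R(X))$. Since both functors $\cD(\widehat{G}(-),X)$ and $\PSh(\cC)(-,R(X))$ send colimits of presheaves to limits of sets, and the representables form a dense (colimit-generating) subcategory on which these two functors agree naturally, they must agree on all of $\PSh(\cC)$; naturality in $X$ is then read off directly from the formula for $R$.
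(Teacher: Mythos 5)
Your proof is correct. The paper states this lemma without proof, treating it as the standard fact that left Kan extension along the Yoneda embedding (the ``nerve--realization'' adjunction) has the indicated right adjoint, and your argument is precisely the standard one: decompose $P$ as the canonical colimit of representables over its category of elements, verify the hom-set bijection on representables via the Yoneda lemma, and extend using that both $\cD(\widehat{G}(-),X)$ and $\PSh(\cC)(-,R(X))$ send colimits to limits.

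One small point worth making explicit: the final step, ``two functors agreeing naturally on a dense subcategory and turning colimits into limits must agree everywhere,'' is cleanest if you exhibit the comparison map rather than just asserting agreement of the two limits. Given $\phi\: P\to R(X)$, each element $a\in P(A)$ yields $\phi_A(a)\in\cD(G(A),X)$, and these assemble into a cocone on the diagram $(A,a)\mapsto G(A)$ with vertex $X$, hence a map $\widehat{G}(P)\cong\colim_{(A,a)}G(A)\to X$; this assignment is visibly natural in $P$ and $X$ and is the bijection your limit computation shows to be an isomorphism. With that made explicit, the argument is complete.
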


If $G: \cC \rightarrow \cD$ is an arbitrary functor (with $\cD$ an arbitrary category), then the universal property of $\PSh( \cC )$ applied to the composite functor
\[
\cC \xrightarrow{G} \cD \xrightarrow{Y} \PSh( \cD )
\]
defines a functor
\[
\PSh( G ): \PSh( \cC ) \rightarrow \PSh( \cD ).
\]

A small category $\cI$ is called \emph{filtered} if
\begin{itemize}
    \item $\cI \neq \emptyset$,
    \item for all $i,j \in \cI$, there exists a $k \in \cI$ and morphisms $i \rightarrow k$, $j \rightarrow k$,
    \item for all parallel morphisms $\alpha, \beta: i \rightarrow j \in \cI$, there exists a morphism $\gamma: j \rightarrow k$ such that $\gamma \circ \alpha = \gamma \circ \beta$.
\end{itemize}
A \emph{filtered colimit} is a colimit of a functor whose source is a small filtered category.
The \emph{ind-completion} of a small category $\cC$ is defined as the full subcategory $\Ind( \cC ) \subseteq \PSh( \cC )$ spanned by those presheaves which arise as a filtered colimit of representable presheaves. We note that we can write this embedding as
\begin{align*}
\Ind( \cC ) &\rightarrow \PSh( \cC )\\
\ti{X} &\mapsto \Ind( \cC )( -, \ti{X} )|_{\cC\op} .
\end{align*}
Moreover, the Yoneda embedding factors over $\Ind( \cC )$:
\[
\cC \xrightarrow{Y} \Ind( \cC ).
\]
The ind-completion has the following universal property: for every functor of type 
$\cC \xrightarrow{G} \cD$
into a category $\cD$ with filtered colimits, there exists a unique (up to natural isomorphism) functor which commutes with filtered colimits
$\Ind(\cC) \xrightarrow{\ti{G}} \cD$
such that the following diagram commutes (up to natural isomorphism):
\begin{center}
       \begin{tikzpicture}
        \coordinate (r) at (4,0);
        \coordinate (d) at (0,-1.5);
        \node (A) {$\cC$};
        \node (B) at ($(A)+(r)$) {$\Ind( \cC )$};
        \node (C) at ($(B) + (d)$) {$\cD$};
        \draw[->,thick] (A) to node[above]{$Y$}(B);
        \draw[->,thick] (A) to node[below]{$G$} (C);
        \draw[->,thick,dashed] (B) to node[right=3pt]{$\ti{G}$} (C);
        \end{tikzpicture}
\end{center}

If $G: \cC \rightarrow \cD$ is an arbitrary functor (with $\cD$ an arbitrary category), then the universal property of $\Ind( \cC )$ applied to the composite functor
\[
\cC \xrightarrow{G} \cD \xrightarrow{Y} \Ind( \cD )
\]
defines a functor
\[
\Ind( G ): \Ind( \cC ) \rightarrow \Ind( \cD ).
\]
We obtain the following diagram of functors which commutes up to natural isomorphism:
\begin{center}
  \begin{tikzpicture}
    \coordinate (r) at (3.5,0);
    \coordinate (d) at (0,-1.5);
    
    \node (11) {$\cC$};
    \node (12) at ($(11)+(r)$) {$\cD$};

    \node (21) at ($(11) + (d)$){$\Ind( \cC )$};
    \node (22) at ($(21)+(r)$) {$\Ind( \cD )$};

    \node (31) at ($(21) + (d)$){$\PSh( \cC )$};
    \node (32) at ($(31)+(r)$) {$\PSh( \cD )$};
    
    \draw[->,thick] (11) to node[above]{$G$} (12);
    \draw[->,thick] (21) to node[above]{$\Ind( G )$}(22);
    \draw[->,thick] (31) to node[above]{$\PSh( G )$} (32);

    \draw[->,thick] (11) to (21);
    \draw[->,thick] (12) to (22);
    
    \draw[->,thick] (21) to (31);
    \draw[->,thick] (22) to  (32);
  \end{tikzpicture}
\end{center}
The vertical functors are all full and faithful, and we will treat them as full inclusions of subcategories in our notation whenever this is convenient.

\begin{lemma} \label{lem:adj-ind}
Let $G: \cC \rightarrow \cD$ be a functor.
The functor $\Ind( G ): \Ind( \cC ) \rightarrow \Ind( \cD )$ has a right adjoint if and only if for every $X \in \cD$, the presheaf $\cD( G(-), X )|_{\cC\op}$ lies in $\Ind( \cC )$, i.e., there exists a $\ti Y \in \Ind( \cC )$ and an isomorphism in $\PSh( \cC )$:
\[
\cD( G(-), X )|_{\cC\op} \cong \Ind( \cC )( -, \ti{Y} )|_{\cC\op}.
\]
\end{lemma}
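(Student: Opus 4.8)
The plan is to reduce the whole question to the presheaf categories, where an adjunction is already available from the appendix, and then check that it restricts to the ind-completions. First I would identify the presheaf in the statement. Applying the first lemma of \Cref{appendix:ind} (on right adjoints of $\widehat{(-)}$) to the cocomplete target $\PSh(\cD)$ and the functor $Y_\cD\circ G\:\cC\to\PSh(\cD)$, the cocontinuous extension $\PSh(G)=\widehat{Y_\cD\circ G}$ acquires a right adjoint $G^{*}$ given by $X\mapsto\PSh(\cD)((Y_\cD\circ G)(-),X)|_{\cC^{\op}}$. By the Yoneda lemma this is precomposition $X\mapsto X\circ G^{\op}$, and on a representable $X\in\cD$ it evaluates to $\cD(G(-),X)|_{\cC^{\op}}$. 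Thus the presheaf in the statement is precisely $G^{*}(X)$. Two structural facts from \Cref{appendix:ind} are then recorded: the large commuting diagram shows that $\PSh(G)$ restricts, along the vertical inclusions, to $\Ind(G)$ on $\Ind(\cC)$; and $\Ind(\cC)\subseteq\PSh(\cC)$ is a full subcategory closed under filtered colimits, since a filtered colimit of filtered colimits of representables is again one. Since colimits of presheaves are computed pointwise, $G^{*}$ preserves all colimits, in particular filtered ones.

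For the ``if'' direction, I would assume $G^{*}(X)\in\Ind(\cC)$ for every $X\in\cD$. Writing an arbitrary $\ti X\in\Ind(\cD)$ as a filtered colimit $\colim_k X_k$ of representables $X_k\in\cD$, the fact that $G^{*}$ preserves filtered colimits gives $G^{*}(\ti X)\cong\colim_k G^{*}(X_k)$, a filtered colimit of objects of $\Ind(\cC)$, hence again in $\Ind(\cC)$ by closure. So $G^{*}$ restricts to a functor $\Ind(\cD)\to\Ind(\cC)$. Restricting the adjunction $\PSh(G)\dashv G^{*}$ to the full subcategories $\Ind(\cC)$ and $\Ind(\cD)$, and using $\Ind(G)=\PSh(G)|_{\Ind(\cC)}$, yields natural isomorphisms $\Ind(\cD)(\Ind(G)(\ti W),\ti X)\cong\PSh(\cC)(\ti W,G^{*}(\ti X))\cong\Ind(\cC)(\ti W,G^{*}(\ti X))$ for $\ti W\in\Ind(\cC)$, $\ti X\in\Ind(\cD)$. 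Hence $G^{*}|_{\Ind(\cD)}$ is a right adjoint of $\Ind(G)$.

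For the ``only if'' direction, I would assume $\Ind(G)$ has a right adjoint $R$ and evaluate at representables. For $W\in\cC$ and $X\in\cD$, full faithfulness of $\cD\hookrightarrow\Ind(\cD)$ gives $\Ind(\cC)(W,R(X))\cong\Ind(\cD)(\Ind(G)(W),X)=\cD(G(W),X)$. Reading this naturally in $W$ as presheaves on $\cC$ identifies $\cD(G(-),X)|_{\cC^{\op}}$ with $\Ind(\cC)(-,R(X))|_{\cC^{\op}}$, and the object $R(X)\in\Ind(\cC)$ is exactly the witness required by the statement.

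The step I expect to be the main obstacle is not conceptual but bookkeeping: making sure that $\Ind(G)$ is genuinely the restriction of $\PSh(G)$ and that the restricted $G^{*}$ is truly the adjoint (rather than merely landing in the correct subcategory). I would handle both by citing the commuting diagram and the adjunction from \Cref{appendix:ind} directly, so that no Kan extension is recomputed by hand, and by being careful that the isomorphisms above are natural in both variables so that the triangle identities are inherited from the ambient $\PSh(G)\dashv G^{*}$ adjunction.
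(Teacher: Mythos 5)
Your proof is correct and follows essentially the same route as the paper's: both directions hinge on identifying the right adjoint of $\PSh(G)$ with $X\mapsto \cD(G(-),X)|_{\cC\op}$, evaluating at representables for the ``only if'' part, and for the ``if'' part using that this adjoint preserves filtered colimits (your pointwise-colimit argument is the same fact the paper phrases as compactness of $G(A)$ in $\Ind(\cD)$) together with closure of $\Ind(\cC)$ under filtered colimits in $\PSh(\cC)$. Your explicit check that the restricted adjunction between the full subcategories is again an adjunction is a point the paper leaves implicit, so no objection there.
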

\begin{proof}
Recall that $\PSh( G )$ always has a right adjoint 
\[
R: \widehat{X} \mapsto \cD( G(-), \widehat{X} )|_{\cC\op}.
\]
Assume that $\Ind( G )$ has a right adjoint $R'$. Then for all $\ti X \in \Ind(\cD)$:
\[
R \ti X \cong \cD( G(-), \ti X )|_{\cC\op} \cong \Ind( \cC )( -, R'\ti X )|_{\cC\op} \cong R'\ti X.
\]
In particular, we see that if $R'$ exists, it necessarily is given by a factorization of $R$ via the full subcategory $\Ind( \cC ) \subseteq \PSh( \cC )$.

Conversely, assume that for every $X \in \cD$, there exists a $\ti Y \in \Ind( \cC )$ and an isomorphism in $\PSh( \cC )$:
\[
\cD( G(-), X )|_{\cC\op} \cong \Ind( \cC )( -, \ti{Y} )|_{\cC\op} \cong \ti Y
\]
Let $\ti X = \colim_{i} X_i$ be an object in $\Ind( \cD )$ given by a filtered colimit of objects $X_i \in \cD$.
The natural map
\[
\colim_i \cD( GA, X_i ) \rightarrow \Ind( \cD )( GA, \colim_i X_i )
\]
is an isomorphism for all $A \in \cC$, since $GA \in \cD$ is compact in $\Ind( \cD )$.
By assumption, each $\cD( G-, X_i )|_{\cC\op}$ lies in $\Ind( \cC )$.
Thus, $\colim_i \cD( G-, X_i )|_{\cC\op}$ lies in $\Ind( \cC )$, since $\Ind( \cC )$ is closed under filtered colimits and the inclusion $\Ind( \cC ) \subseteq \PSh( \cC )$ commutes with filtered colimits. Thus, 
\[
R( \ti X ) = \Ind( \cD )( G(-), \ti X  )|_{\cC\op}
\]
lies in $\Ind( \cC )$, which proves that $R$ factors over $\Ind( \cC )$.
\end{proof}

\subsection{Finitely presented functors}\label{appendix:fp}

The theory of finitely presented functors goes back to Auslander \cite{Aus66}.
Let $\cA$ be an additive category.
We denote by $\cAb$ the category of abelian groups.
A functor $M$ of type $\cA^{\op} \rightarrow \cAb$ is called \emph{finitely presented} if it is the cokernel of a morphism between representable functors, i.e., if there exists a morphism $A \xrightarrow{\alpha} B$ in $\cA$ and an exact sequence of functors\footnote{This is a sequence which is exact for all evaluations at objects in $\cA$.}
\begin{center}
  \begin{tikzpicture}[mylabel/.style={fill=white}]
      \coordinate (r) at (4,0);
      
      \node (A) {$\cA(-,B)$};
      \node (B) at ($(A)+(r)$) {$\cA(-,A)$};
      \node (C) at ($(B) + 0.5*(r)$) {$M$};
      \node (D) at ($(C) + 0.5*(r)$) {$0$.};
      
      \draw[->,thick] (A) to node[above]{$\cC(-,\alpha)$} (B);
      \draw[->,thick] (B) to (C);
      \draw[->,thick] (C) to (D);
  \end{tikzpicture}
\end{center}
We denote by $\rmod{\cA}$ the category whose objects are finitely presented functors and whose morphisms are given by natural transformations.
The category $\rmod{\cA}$ is additive and has cokernels, which are given pointwise.
It is the cokernel completion of $\cA$ in the following sense:
for every functor of type 
$\cA^{\op} \rightarrow \cB$
into an additive category $\cB$ with cokernels, there exists a unique (up to natural isomorphism) \emph{right exact} functor $\rmod{\cA} \rightarrow \cB$ (i.e., it preserves cokernels) such that the following diagram commutes (up to natural isomorphism):
\begin{center}
       \begin{tikzpicture}
        \coordinate (r) at (4,0);
        \coordinate (d) at (0,-1.5);
        \node (A) {$\cA$};
        \node (B) at ($(A)+(r)$) {$\rmod{\cA}$};
        \node (C) at ($(B) + (d)$) {$\cB$};
        \draw[->,thick] (A) to node[above]{$Y$}(B);
        \draw[->,thick] (A) to (C);
        \draw[->,thick,dashed] (B) to (C);
        \end{tikzpicture}
\end{center}
Here, $Y$ denotes the Yoneda embedding $A \mapsto \cA(-,A)$.

It is convenient to use the concept of a projective object not only in an abelian category but in an additive category.

\begin{definition}\label{definition:projective_obj}
    An object $P$ in an additive category $\cA$ is called \emph{projective} if $\cA(P,-)$ preserves cokernels.
    We say that $\cA$ \emph{has enough projectives} if for every object $A \in \cA$, we have projective objects $P,Q$ and a sequence
    \begin{center}
  \begin{tikzpicture}[mylabel/.style={fill=white}]
      \coordinate (r) at (3,0);
      
      \node (A) {$Q$};
      \node (B) at ($(A)+(r)$) {$P$};
      \node (C) at ($(B) + (r)$) {$A$};
      \node (D) at ($(C) + (r)$) {$0$};
      
      \draw[->,thick] (A) to (B);
      \draw[->,thick] (B) to (C);
      \draw[->,thick] (C) to (D);
  \end{tikzpicture}
\end{center}
where $P \rightarrow A$ is the cokernel of $Q \rightarrow P$.
We call such a sequence a \emph{presentation of $A$ (by projectives)}.
\end{definition}

\begin{remark}\label{remark:representables_are_projective}
It follows from the Yoneda lemma that representable functors are projective objects in $\rmod{\cA}$.
In particular, $\rmod{\cA}$ has enough projectives.    
\end{remark}

\begin{remark}\label{remark:commutative_square_calculus}
    Let $Q \xrightarrow{\rho} P \rightarrow A$ and $Q' \xrightarrow{\rho'} P' \rightarrow B$ be presentations of objects $A,B \in \cA$ by projectives.
    Let $\mathcal{R}$ be the abelian group of pairs of morphisms $(\alpha:P \rightarrow P', \omega: Q \rightarrow Q')$ such that the following square commutes:
    \begin{center}
  \begin{tikzpicture}[mylabel/.style={fill=white}]
    \coordinate (r) at (3,0);
    \coordinate (d) at (0,-1.2);
    
    \node (11) {$Q$};
    \node (12) at ($(11)+(r)$) {$P$};

    \node (21) at ($(11) + (d)$){$Q'$};
    \node (22) at ($(21)+(r)$) {$P'$};
    
    \draw[->,thick] (11) to node[above]{$\rho$}(12);

    \draw[->,thick] (21) to node[above]{$\rho'$}(22);

    \draw[->,thick] (11) to node[left]{$\omega$}(21);
    \draw[->,thick] (12) to node[right]{$\alpha$}(22);
  \end{tikzpicture}
\end{center}
Let $\mathcal{R}' \subseteq \mathcal{R}$ be the subgroup given by those pairs $(\alpha, \omega)$ such that $\alpha$ factors via $\rho'$.
Then it is a standard computation that we have an isomorphism
\[
\mathcal{R}/\mathcal{R}' \simeq \Hom_{\cA}(A,B)
\]
which sends a commutative square of the above form to its induced morphism between cokernels. 
\end{remark}

As can be seen from \Cref{remark:commutative_square_calculus}, the category $\rmod{\cA}$ can be viewed as the \emph{cokernel completion} or \emph{category of presentations} of $\cA$.

\begin{lemma}\label{lemma:well-defined_restriction}
Let $\cP \subseteq \cA$ be a full additive subcategory which consists of projective objects.
If every object in $\cA$ admits a presentation by objects in $\cP$, then restriction to $\cP$ gives a well-defined functor
$$
(M \mapsto M|_{\cP}): \rmod{\cA} \rightarrow \rmod{\cP}.
$$
Moreover, the composition
$$
\cA \rightarrow \rmod{\cA} \rightarrow \rmod{\cP}
$$
of this restriction with the Yoneda embedding is fully faithful.
\end{lemma}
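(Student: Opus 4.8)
The plan is to establish the two assertions in turn, the crux being a single claim: the restriction to $\cP$ of a representable functor on $\cA$ is already finitely presented over $\cP$.

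First I would prove this claim. Fix $A\in\cA$ together with a presentation $Q\xrightarrow{\rho}P\to A\to0$ by objects $P,Q\in\cP$, which exists by hypothesis. I claim that
$$
\cP(-,Q)\xrightarrow{\cP(-,\rho)}\cP(-,P)\to\cA(-,A)|_{\cP}\to0
$$
is a presentation of $\cA(-,A)|_{\cP}$ over $\cP$. To check pointwise exactness I evaluate at an arbitrary $P'\in\cP$: since $P'$ is projective in $\cA$ by the hypothesis on $\cP$ and \Cref{definition:projective_obj}, the functor $\cA(P',-)$ preserves the cokernel $Q\to P\to A\to0$, so $\cA(P',Q)\to\cA(P',P)\to\cA(P',A)\to0$ is exact; as $\cP$ is full, the first two groups are $\cP(P',Q)$ and $\cP(P',P)$, which is exactly what is needed. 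Now for a general $M\in\rmod{\cA}$ with presentation $\cA(-,A_1)\to\cA(-,A_0)\to M\to0$, restriction to $\cP$ is exact (cokernels of functors are computed pointwise), so $M|_{\cP}$ is the cokernel of a morphism between the finitely presented functors $\cA(-,A_1)|_{\cP}$ and $\cA(-,A_0)|_{\cP}$. Since $\rmod{\cP}$ is closed under cokernels, $M|_{\cP}\in\rmod{\cP}$, and restriction of natural transformations is functorial; this gives the well-defined functor $M\mapsto M|_{\cP}$.

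Next I would compute the Hom-groups for the composite $\cA\to\rmod{\cP}$, $A\mapsto\cA(-,A)|_{\cP}$, directly from the presentation above. Applying the left-exact functor $\Hom_{\rmod{\cP}}(-,\cA(-,B)|_{\cP})$ to that presentation and using the Yoneda lemma to identify $\Hom_{\rmod{\cP}}(\cP(-,P),\cA(-,B)|_{\cP})\cong\cA(P,B)$, and likewise for $Q$, yields an exact sequence
$$
0\to\Hom_{\rmod{\cP}}\!\big(\cA(-,A)|_{\cP},\,\cA(-,B)|_{\cP}\big)\to\cA(P,B)\xrightarrow{\cA(\rho,B)}\cA(Q,B).
$$
Hence the Hom-group is the kernel of $\cA(\rho,B)$, that is, the set of $g\colon P\to B$ with $g\rho=0$. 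Because $P\to A$ is the cokernel of $\rho$ in $\cA$, its universal property identifies this kernel with $\cA(A,B)$, so $\Hom_{\rmod{\cP}}(\cA(-,A)|_{\cP},\cA(-,B)|_{\cP})\cong\cA(A,B)$.

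Finally I would verify that this identification is precisely the map induced by the functor, giving full faithfulness: a morphism $\phi\colon A\to B$ is sent to $\cA(-,\phi)|_{\cP}$, whose precomposition with $\cP(-,P)\to\cA(-,A)|_{\cP}$ corresponds under Yoneda to $\phi\circ(P\to A)\in\cA(P,B)$, which is exactly the image of $\phi$ under the cokernel universal property used above; naturality in $A$ and $B$ is then routine. The main obstacle is the first claim: without projectivity of the objects of $\cP$ the restriction of a representable need not be finitely presented over $\cP$, so everything rests on using a $\cP$-presentation of $A$ together with the projectivity built into \Cref{definition:projective_obj}.
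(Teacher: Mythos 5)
Your proposal is correct and follows essentially the same route as the paper: the key step in both is that a $\cP$-presentation $Q\to P\to A\to 0$ restricts to an exact sequence of functors on $\cP$ because the objects of $\cP$ are projective, whence $\cA(-,A)|_{\cP}$ is finitely presented. The paper handles the extension to all of $\rmod{\cA}$ via the universal property of $\rmod{\cA}$ and delegates full faithfulness to \Cref{remark:commutative_square_calculus}, whereas you argue via pointwise cokernels and a direct Yoneda/left-exactness computation, but these are only presentational differences.
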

\begin{proof}
Let $Q \rightarrow P \rightarrow A$ be a presentation of $A \in \cA$ by objects in $\cP$.
Since $\cP$ consists of projective objects, the restricted sequence of functors
    \begin{center}
  \begin{tikzpicture}[mylabel/.style={fill=white}]
      \coordinate (r) at (3,0);
      
      \node (A) {$\cP(-,Q)$};
      \node (B) at ($(A)+(r)$) {$\cP(-,P)$};
      \node (C) at ($(B) + (r)$) {$\cA(-,A)|_{\cP}$};
      \node (D) at ($(C) + (r)$) {$0$};
      
      \draw[->,thick] (A) to (B);
      \draw[->,thick] (B) to (C);
      \draw[->,thick] (C) to (D);
  \end{tikzpicture}
\end{center}
is exact, and hence $\cA(-,A)|_{\cP}$ is finitely presented.
It follows that we have a well-defined functor $(A \mapsto \cA(-,A)|_{\cP}): \cA \rightarrow \rmod{\cP}$. By the universal property of $\rmod{\cA}$, this functor extends to a right exact functor $\rmod{\cA} \rightarrow \rmod{\cP}$ which is isomorphic to the desired restriction functor.
Fully faithfulness of its composition with the Yoneda embedding easily follows from \Cref{remark:commutative_square_calculus}.
\end{proof}

\begin{definition}\label{definition:weak_cokernel}
    A \emph{weak kernel} of a morphism $A \xrightarrow{\alpha } B$ in $\cA$ is given by a morphism $K \xrightarrow{\kappa} A$ in $\cA$ such that
\begin{center}
  \begin{tikzpicture}[mylabel/.style={fill=white}]
      \coordinate (r) at (4,0);
      
      \node (A) {$\cA(-,K)$};
      \node (B) at ($(A)+(r)$) {$\cA(-,A)$};
      \node (C) at ($(B) +(r)$) {$\cA(-,B)$};
      
      \draw[->,thick] (A) to node[above]{$\cA(-,\kappa)$} (B);
      \draw[->,thick] (B) to node[above]{$\cA(-,\alpha)$}(C);
  \end{tikzpicture}
\end{center}
is an exact sequence of functors, and we call $K \xrightarrow{\kappa} A \xrightarrow{\alpha } B$ a \emph{weak kernel sequence}. Dually, a \emph{weak cokernel} of $A \xrightarrow{\alpha } B$ is given by a morphism $B \xrightarrow{\epsilon} C$ such that
\begin{center}
  \begin{tikzpicture}[mylabel/.style={fill=white}]
      \coordinate (r) at (4,0);
      
      \node (A) {$\cA(A,-)$};
      \node (B) at ($(A)+(r)$) {$\cA(B,-)$};
      \node (C) at ($(B) +(r)$) {$\cA(C,-)$};
      
      \draw[<-,thick] (A) to node[above]{$\cA(-,\alpha)$} (B);
      \draw[<-,thick] (B) to node[above]{$\cA(-,\epsilon)$}(C);
  \end{tikzpicture}
\end{center}
is an exact sequence of functors, and we call $A \xrightarrow{\alpha } B \xrightarrow{\epsilon} C$ a \emph{weak cokernel sequence}.
\end{definition}

\begin{theorem}\label{theorem:Freyd_main}
The category $\rmod{\cA}$ is abelian if and only if $\cA$ has weak kernels.
In that case, kernels are given pointwise.
Moreover, a right exact functor $F: \rmod{\cA} \rightarrow \cB$ into an abelian category $\cB$ is exact if and only if the restricted functor $\cA \rightarrow \rmod{\cA} \rightarrow \cB$ maps weak kernel sequences to exact sequences.
\end{theorem}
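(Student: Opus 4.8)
The plan is to realize $\rmod{\cA}$ as a full additive subcategory of the functor category $\operatorname{Fun}(\cA\op,\cAb)$, which is abelian with kernels and cokernels computed pointwise. By construction, cokernels in $\rmod{\cA}$ already agree with the pointwise ones, so the first assertion reduces to a single closure statement: \emph{if $\cA$ has weak kernels, then the pointwise kernel of any morphism between finitely presented functors is again finitely presented.} Granting this, $\rmod{\cA}$ is a full additive subcategory of an abelian category closed under both kernels and cokernels, and such a subcategory is automatically abelian with (co)kernels inherited from the ambient category: the canonical morphism $\operatorname{coim}\to\im$ of any arrow is computed in $\operatorname{Fun}(\cA\op,\cAb)$, where it is an isomorphism. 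This proves one implication of the equivalence together with the claim that kernels in $\rmod{\cA}$ are pointwise.

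For the closure statement, consider $\phi\colon M\to N$ with presentations $\cA(-,A_1)\xrightarrow{\cA(-,a_1)}\cA(-,A_0)\to M\to0$ (with projection $\pi_M$) and $\cA(-,B_1)\xrightarrow{\cA(-,b_1)}\cA(-,B_0)\to N\to0$. Using projectivity of representables (\Cref{remark:representables_are_projective}) I lift $\phi$ to a ladder given by $g_0\colon A_0\to B_0$ and $g_1\colon A_1\to B_1$ with $b_1g_1=g_0a_1$. The key idea is to describe $\ker\phi$ by solving the defining equations at the level of $\cA$, applying weak kernels twice. First, let $w=(w_A,w_B)\colon W\to A_0\oplus B_1$ be a weak kernel of $(g_0,-b_1)\colon A_0\oplus B_1\to B_0$; since a morphism $f\colon X\to A_0$ represents an element of $\ker\phi$ exactly when $g_0f=b_1h$ for some $h$, the map $\psi:=\pi_M\circ\cA(-,w_A)\colon\cA(-,W)\to M$ has image precisely the subfunctor $\ker\phi$. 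Second, let $v=(v_W,v_A)\colon V\to W\oplus A_1$ be a weak kernel of $(w_A,-a_1)\colon W\oplus A_1\to A_0$; the same check identifies the image of $\cA(-,v_W)$ with $\ker\psi$. Combining both steps yields a finite presentation $\cA(-,V)\xrightarrow{\cA(-,v_W)}\cA(-,W)\to\ker\phi\to0$, so $\ker\phi\in\rmod{\cA}$. I expect this iterated weak-kernel construction to be the main obstacle, since it is what forces the hypothesis and must be arranged so that the same data later controls exactness.

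For the converse, assume $\rmod{\cA}$ is abelian and take any $\alpha\colon A\to B$ in $\cA$. Form the kernel $k\colon K'\to\cA(-,A)$ of $\cA(-,\alpha)$ in $\rmod{\cA}$, choose an epimorphism from a representable $\cA(-,K)\twoheadrightarrow K'$ (every finitely presented functor is a quotient of a representable), and let $\ol{\kappa}\colon K\to A$ correspond under Yoneda to the composite $\cA(-,K)\to K'\to\cA(-,A)$. To see that $\ol{\kappa}$ is a weak kernel of $\alpha$ I check pointwise exactness of $\cA(-,K)\xrightarrow{\cA(-,\ol{\kappa})}\cA(-,A)\xrightarrow{\cA(-,\alpha)}\cA(-,B)$: one inclusion is immediate, and for the other, given $f\colon X\to A$ with $\alpha f=0$ the map $\cA(-,f)$ factors through $k$, and projectivity of $\cA(-,X)$ lets me lift this factorization along $\cA(-,K)\twoheadrightarrow K'$, producing $\ol{s}\colon X\to K$ with $\ol{\kappa}\,\ol{s}=f$.

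Finally, for the exactness criterion write $F_0:=F\circ Y$. The direction $(\Rightarrow)$ is immediate: a weak kernel sequence becomes exact in $\rmod{\cA}$ by the pointwise description of kernels, and an exact functor preserves that exactness. For $(\Leftarrow)$ it suffices to show that the right exact $F$ preserves kernels. I reuse the construction above: applying $F$ to the presentation of $\ker\phi$ gives $F(\ker\phi)=\operatorname{coker}(F_0v_W)$, with $F\iota$ (for the inclusion $\iota\colon\ker\phi\hookrightarrow M$) compatible with $F_0w_A$, while the hypothesis makes the two weak kernel sequences defining $w$ and $v$ exact after $F_0$. A diagram chase with generalized elements in $\cB$---using that $FM$ is the cokernel of $F_0a_1$, that $FN$ is the cokernel of $F_0b_1$, and the two exactified sequences---shows that $F\iota$ is a monomorphism with image $\ker(F\phi)$. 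Hence $F$ preserves every kernel, so it is left exact, and being right exact it is exact. This concluding chase is routine but is where all the pieces assembled in the kernel construction are finally used.
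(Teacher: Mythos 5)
Your proof is correct, but note that the paper does not actually prove this theorem: it defers entirely to Freyd \cite{Freyd66} and Krause \cite{Krause98}, and what you have written is a faithful, self-contained rendition of that classical argument. The core of your proof --- realizing $\rmod{\cA}$ inside the functor category, presenting the pointwise kernel of $\phi\colon M\to N$ by two successive weak kernels (first of $(g_0,-b_1)$ on $A_0\oplus B_1$, then of $(w_A,-a_1)$ on $W\oplus A_1$), deducing the converse from projectivity of representables, and recycling the same resolution for the exactness criterion --- is exactly the standard route, and I checked that each exactness claim holds pointwise as you assert. The only part left implicit is the final diagram chase showing $F\iota$ is a monomorphism onto $\ker(F\phi)$; it is indeed routine given that right exactness identifies $\ker(F_0A_0\to FM)$ with $\operatorname{im}(F_0 a_1)$ and the hypothesis exactifies the two weak kernel sequences, so this is an acceptable level of detail rather than a gap.
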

\begin{proof}
This is due to Freyd \cite{Freyd66}, see also \cite{Krause98}*{Section 2}.
\end{proof}

\bibliography{bib}
\bibliographystyle{amsrefs}%

\end{document}